\DeclareMathOperator{\spec}{sp}
\DeclareMathOperator{\spsup}{sp \;sup}
\DeclareMathOperator{\spinf}{sp \;inf}
\DeclareMathOperator{\strsup}{str \;sup}
\DeclareMathOperator{\strinf}{str \;inf}
\newcommand*{\R}{{\mathbb{R}}}     
\newcommand*{\Z}{{\mathbb{Z}}}     
\newcommand*{\N}{{\mathbb{N}}}
\newcommand*{\Abs}[2][default]{\ifthenelse{\equal{#1}{default}}{\left\lvert#2\right\rvert}{\ldelim{#1}{\lvert}#2\rdelim{#1}{\rvert}}}
\newcommand*{\Norm}[2][default]{\ifthenelse{\equal{#1}{default}}{\left\lVert#2\right\rVert}{\ldelim{#1}{\lVert}#2\rdelim{#1}{\rVert}}}
\newcommand*{\Iprod}[3][default]{\ifthenelse{\equal{#1}{default}}{\left\langle#2,#3\right\rangle}{\ldelim{#1}{\langle}#2,#3\rdelim{#1}{\rangle}}}
\newcommand*{\Dualpair}[3][default]{\ifthenelse{\equal{#1}{default}}{\left\langle#2,#3\right\rangle}{\ldelim{#1}{\langle}#2,#3\rdelim{#1}{\rangle}}}
\newcommand*{\ddb}[2][1]{\ifthenelse{\equal{#1}{1}}{\frac{d}{d#2}}{\frac{d^{#1}}{d#2^{#1}}}}
\newcommand*{\pd}[3][1]{\ifthenelse{\equal{#1}{1}}{\frac{\partial{#2}}{\partial{#3}}}{\frac{\partial^{#1}{#2}}{\partial#3^{#1}}}}
\newcommand{\blue}[1]{{\color{blue} #1}}
\renewcommand{\blue}[1]{#1}
\newcommand*{\rdc}[1]{{#1}^{\perp}}
\newcommand*{\ldc}[1]{{}^{\perp}#1}
\newcommand*{\aspp}{({A}^{\perp})_{sp}}
\newcommand*{\apld}{({}^{\perp}A)_{sp}}
\newcommand*\lenv{{\hbox{\raisebox{-.15ex}{\rotatebox[origin=c]{50}{$\smallsmile$}}\kern-8.65pt\rotatebox[origin=c]{-25}{$\smallsetminus$}}}}
\newcommand*\uenv{{\hbox{\raisebox{-.0ex}{\rotatebox[origin=c]{-45}{$\smallfrown$}}\kern-5.6pt\raisebox{.2ex}{\rotatebox[origin=c]{-5}{\scriptsize\slash}}}}\,\kern+1.5pt}
\newcommand{\sleq}{\preccurlyeq}
\newcommand{\sgeq}{\succcurlyeq}
\newcommand*{\Ss}{{\mathcal{S}}}
\newcommand*{\lupp}[1]{\kern0pt^{\kern0pt u \kern0pt}\kern1pt#1}
\newcommand*{\llow}[1]{\kern0pt^{\kern0pt l \kern0pt}\kern1pt#1}
\newcommand*{\rupp}[1]{#1\kern1pt^{\kern0pt u \kern0pt}\kern0pt}
\newcommand*{\rlow}[1]{#1\kern1pt^{\kern0pt l \kern0pt}\kern0pt}
\newcommand*{\ul}[1]{\kern0pt^{\kern0pt u \kern0pt}\kern0pt#1\kern1pt^{\kern0pt l \kern0pt}\kern0pt}
\newcommand*{\lu}[1]{\kern0pt^{\kern0pt l \kern0pt}\kern0pt#1\kern0pt^{\kern0pt u \kern0pt}\kern0pt}
\newcommand*{\bproofname}{Proof}
\newenvironment{bproof}[1][\bproofname]{\begin{proof}[#1]}{\end{proof}}
\newtheorem{thm}{Theorem}[section]
\numberwithin{thm}{section}
\newtheorem{prop}[thm]{Proposition}
\newtheorem{lemma2}[thm]{Lemma}
\newtheorem{cor}[thm]{Corollary}
\theoremstyle{definition}
\newtheorem{defn}[thm]{Definition}
\newtheorem{remark2}[thm]{Remark}
\newtheorem{example2}[thm]{Example}
\numberwithin{equation}{section}
\begin{document}

\title[Ideals, bands and decompositions in mixed lattice spaces]{Ideals, bands and direct sum decompositions in mixed lattice vector spaces}

\thispagestyle{plain}

\author{Jani Jokela}
\address[J. Jokela]{Mathematics, Faculty of Information Technology and Communication Sciences, 
Tampere University, PO. Box 692, 33101 Tampere, Finland}
\email{jani.jokela@tuni.fi}


\begin{abstract}
A mixed lattice vector space is a partially ordered vector space with two partial orderings and certain lattice-type properties. 
In this paper we first give some fundamental results in mixed lattice groups, and then we investigate the structure theory of mixed lattice vector spaces, which can be viewed as a generalization of the theory of Riesz spaces. More specifically, we study the properties of ideals and bands in mixed lattice spaces, and the related idea of representing a mixed lattice space as a direct sum of disjoint bands. Under certain conditions, these decompositions can also be given in terms of order projections. 
\end{abstract}

\maketitle

\section{Introduction}
\label{sec:s1}
The idea of equipping a vector space with two partial orderings plays an important role in classical potential theory \cite{doob}. 
During the 1970s and early 1980s, M. Arsove and H. Leutwiler 
introduced the notion of a mixed lattice semigroup which provides a rather general setting for axiomatic potential theory \cite{ars}. 
The novelty of their theory was that it mixed two partial orderings in a semigroup in such way that the resulting structure is not a lattice, in general, but it has many lattice-type properties, and the interplay between the two partial orderings plays a fundamental role in the theory. Although Arsove and Leutwiler formulated their theory in the semigroup setting, a similar mixed lattice order structure can also be imposed on groups and vector spaces where non-positive elements are present. This gives rise to the notions of mixed lattice groups and mixed lattice vector spaces. In this paper, we are mostly concerned with the latter.   
A mixed lattice space is a real partially ordered vector space with two partial orderings, where the usual lattice operations (i.e. the supremum and infimum of two elements) are replaced with asymmetric mixed envelopes that are formed with respect to the two partial orderings. As a consequence, the theory of mixed lattice structures is asymmetric in nature. For example, the mixed envelopes do not have commutative or distributive properties, in contrast to the theory of Riesz spaces, where the lattice operations are commutative and distributive. 
On the other hand, mixed lattice spaces have also many similarities to Riesz spaces. In fact, a Riesz space is just a special case of a mixed lattice vector space, in which the two partial orderings coincide. Early studies of mixed lattice groups were done by Eriksson-Bique \cite{eri1}, \cite{eri}, and more recently by the authors in \cite{jj1}. The present paper is a continuation of the research that was commenced in \cite{eri}, \cite{eri1} and \cite{jj1}.

Ideals and bands are the main structural components of a Riesz space, and Riesz spaces can be decomposed into order direct sums of disjoint bands.  
Apart from Riesz spaces, these concepts have recently been studied in more general ordered vector spaces \cite{prerieszbands,povsbands}, as well as in mixed lattice semigroups in \cite{ars}. 
It is therefore natural to explore these ideas also in the mixed lattice space setting, and this is the main topic of the present paper. 

First we give a brief survey of terminology, definitions and basic results that will be needed in the subsequent sections.
Section \ref{sec:s3} contains some important results on the properties of the mixed envelopes. Many of these results are known in mixed lattice semigroups through the work of Arsove and Leutwiler in \cite{ars}, but they haven't been studied in the group setting. The main difficulty  here is that many of the results that hold in a mixed lattice semigroup depend on the fact that all the elements are positive. 
Therefore, some of these properties do not hold in the group setting without some limitations.   
We also give the mixed lattice version of the fundamental dominated decomposition theorem.

The concept of an ideal in mixed lattice groups and vector spaces was introduced in \cite{jj1}. 
Many properties of the most relevant subspaces, including ideals, are determined by their sets of positive elements.  
In this context, positive cones play a fundamental role and we discuss some properties of mixed lattice cones, which generate mixed lattice subspaces. 
In Section \ref{sec:s4} we study the structure of ideals, and we introduce the notion of a band in mixed lattice spaces and give some basic results concerning bands.  
The two partial orderings give rise to two different types of ideals, called ideals and specific ideals, 
depending on which partial ordering is considered. 
Consequently, we also have different notions of a band,  
depending on the type of the underlying ideal. In addition to these, we introduce an intermediate notion of a quasi-ideal, which plays an important role in the structure theory.  

Disjoint complements are introduced in Section \ref{sec:s5}. 
The disjointness of two positive elements is defined in a similar manner as in Riesz spaces, by requiring that the lower envelope of the two elements is zero. However, the asymmetric nature of the mixed envelopes leads to two distinct one-sided notions of disjointness, the left and right disjointness.
The disjoint complements and direct sum decompositions in mixed lattice semigroups were studied by Arsove and Leutwiler in \cite{ars}, where they showed that the left and right disjoint complements of any non-empty set in a mixed lattice semigroup have certain band-type properties. In mixed lattice vector spaces the situation is a bit more complicated, again due to the existence of non-positive elements. In order to obtain a satisfactory theory, we consider disjointness first 
for positive elements, 
and we then define the left and right disjoint complements as the specific ideals and ideals, respectively, generated by certain cones of positive elements that satisfy the disjointness conditions. This approach turns out to be quite natural, and we also find it to be compatible with the existing theory of mixed lattice semigroups. 
We then investigate the structure and properties of the disjoint complements, such as the band properties and the  
structure of the sub-cones that generate the disjoint complements.   
We also introduce the notion of symmetric absolute value in mixed lattice spaces and show that the right disjoint complement can be described in terms of the absolute value. This approach corresponds to the usual definition of the disjoint complement in a Riesz space. Finally, we give some results concerning bands in Archimedean mixed lattice spaces.

In the final section we study decompositions of mixed lattice spaces, where the space can be written as a direct sum of an ideal and a specific ideal that are disjoint complements of each other. 
The direct sum decompositions give rise to band projections which can be expressed in terms of order projection operators. Of course, these projection properties have their well-known analogues in Riesz space theory. 
For an account of the theory and terminology of Riesz spaces, we refer to \cite{lux} and \cite{zaa}.

\section{Preliminaries}
\label{sec:s2}


The fundamental structure on which most of the theory of mixed lattice groups and vector spaces is based on, is called a \emph{mixed lattice semigroup}. Let $S(+,\leq)$ be a positive partially ordered abelian semigroup with zero element. The semigroup $S(+,\leq)$ is also assumed to have the cancellation property: 
$x+z\leq y+z$ implies $x\leq y$ for all $x,y,z\in S$. 
The partial order $\leq$ is called the \emph{initial order}. A second partial ordering $\sleq$ (called the \emph{specific order}) is then defined on $S$ by $x\sleq y$ if $y=x+m$ for some $m\in S$. Note that this implies that $x\sgeq 0$ for all $x\in S$. 
The semigroup $(S,+,\leq,\sleq)$ is called a \emph{mixed lattice semigroup} if, in addition, the \emph{mixed lower envelope}
$$
x\lenv y \,=\,\max \,\{\,w\in S: \; w\sleq x \; \textrm{ and } \; w\leq y \,\} %
$$
and the \emph{mixed upper envelope}
$$
x\uenv y\,=\,\min \,\{\,w\in S: \; w\sgeq x \; \textrm{ and } \; w\geq y \,\},
$$ 
exist for all $x,y\in S$, and they satisfy the identity 
\begin{equation}\label{basic_identity}
x\uenv y + y\lenv x =x+y.
\end{equation} 
In the above expressions the minimum and maximum are taken with respect to the initial order $\leq$.

Typical examples of mixed lattice semigroups are constructed by starting with some vector space $V$ and a cone $C$ in $V$ which generates a partial ordering $\leq$. Then a sub-cone $S$ of $C$ is taken as the semigroup in which the specific order $\sleq$ is defined as the partial ordering induced by the semigroup $S$ itself, that is, $S$ is taken as the positive cone for the partial ordering $\sleq$. This procedure  turns $S$ into a mixed lattice semigroup, provided that the conditions in the above definitions are satisfied. Many examples of mixed lattice structures are given in \cite{jj1}.

A \emph{mixed lattice group} is a partially ordered commutative group $(G,+,\leq,\sleq)$ with two partial orderings $\leq$ and $\sleq$ (called again the initial order and the specific order, respectively) such that the mixed upper and lower envelopes $x\uenv y$ and $x\lenv y$, as defined above, exist in $G$ for all $x,y\in G$. A sub-semigroup $S$ of a mixed lattice group $G$ is called a \emph{mixed lattice sub-semigroup} of $G$ if $x\uenv y$ and $x\lenv y$ belong to $S$ whenever $x,y\in S$. 
It should be remarked here that the additional identity \eqref{basic_identity} in the definition of a mixed lattice semigroup is then automatically satisfied because the identity holds in $G$ (see further properties of the mixed envelopes below). 
A mixed lattice group $G$ is called \emph{quasi-regular} if the set $G_{sp}=\{x\in G:x\sgeq 0\}$ is a mixed lattice sub-semigroup of $G$, and $G$ is called \emph{regular} if $G$ is quasi-regular and every $x\in G$ can be written as $x=u-v$ where $u,v\in G_{sp}$.

There are several fundamental rules for the mixed envelopes that hold in quasi-regular mixed lattice groups. These are listed below, and will be used in this paper frequently. For more details concerning these rules and their proofs we refer to \cite{jj1} and \cite{eri}.

The following hold for all elements $x$, $y$ and $z$ in a quasi-regular mixed lattice group $G$.

$$ 
\begin{array}{ll}\label{p1to8}
(M1)   & \; x\uenv y \, + \, y \lenv x \, = \, x+y  \\ 
(M2a)  & \; z \,+ \,x\uenv y \, = \, (x+z)\uenv(y+z) \\ 
(M2b)  & \; z \,+\, x\lenv y \, = \, (x+z)\lenv(y+z) \\
(M3)   & \; x\uenv y \, = \, -(-x \lenv -y) \\
(M4)  & \; x\lenv y \sleq x\sleq x\uenv y \quad \textrm{and} \quad x\lenv y \leq y\leq x\uenv y \\
(M5)  & \; x\sleq u \quad \textrm{and} \quad y\leq v \; \implies \; x\uenv y \leq u\uenv v \quad \textrm{and} \quad x\lenv y \leq  u\lenv v \\
(M6a)  & \; x\sleq y \, \iff \, x\uenv y = y \, \iff \, y\lenv x = x \\
(P6b)  & \; x\leq y \, \iff \, y\uenv x = y \, \iff \, x\lenv y = x \\
(M7)   & \; x\sleq y \, \implies \, x\leq y  \\
(M8a)  & \; x\sleq z \quad \textrm{and} \quad y\sleq z \; \implies \; x\uenv y \sleq z \\
(M8b)  & \; z\sleq x \quad \textrm{and} \quad z\sleq y \; \implies \; z \sleq x\lenv y  \bigskip\\
\end{array} 
$$

We should mention here that (M1)--(M6a) hold more generally in every mixed lattice group. Moreover, quasi-regularity is equivalent to the properties (M8a) and (M8b), and these in turn imply (M7) (but not conversely). A mixed lattice group in which (M7) holds is called \emph{pre-regular}.

\section{Additional properties of mixed lattice groups}
\label{sec:s3}

In this section we give several further properties of mixed lattice groups. Many of these have been studied in mixed lattice semigroups in \cite{ars}, but some of them do not carry over to the group setting without some restrictions.

We begin by introducing a useful tool (which is due to M. Arsove and H. Leutwiler, \cite{ars}) for studying the properties of the mixed envelopes. For $x,u\in G$ we define the mapping $S_x(u)=\min\{w\sgeq 0:u\leq w+x\}$, where the minimum is taken with respect to $\leq$. We will first show that the element $S_x(u)$ exists for all $x,u\in G$ and it has certain basic properties.

\begin{prop}\label{s_u}
Let $G$ be a quasi-regular mixed lattice group. The mapping $S_x$ defined above has the following properties. 
\begin{enumerate}[(a)]
\item
The element $S_x(u)=\min\{w\sgeq 0:u\leq w+x\}$ exists for all $x,u\in G$ and it is given by $S_x(u)=x\uenv u -x=u-u\lenv x$.
\item
If $x\sgeq 0$  then\,  $S_x S_y (u)=  S_{x+y}(u)$ \, holds for all $u,y\in G$.  Moreover, if $x,y\sgeq 0$  then $S_x S_y (u)=S_y S_x (u)=S_{x+y} (u)$ for all $u\in G$.
\item
$S_{x+a}(u+a)=S_x(u)$ \,  for all \, $x,u,a\in G$.
\item
If\, $u\sleq v$ \,then\, $S_x(u)=S_{v-u}S_x(v)$ \, for all  $x\in G$.
\end{enumerate}
\end{prop}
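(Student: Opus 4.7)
The plan is to establish the four parts in sequence, with part (b) carrying essentially all the content and the rest falling out by shifts.

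For (a), the two claimed formulas agree by (M1), so it suffices to verify that $w := x\uenv u - x$ is the minimum (in $\leq$) of the set $\{w'\sgeq 0 : u \leq w'+x\}$. Membership is immediate from (M4): $x\sleq x\uenv u$ gives $w\sgeq 0$, and $u\leq x\uenv u$ gives $u\leq w+x$. For minimality, if $w'\sgeq 0$ and $u\leq w'+x$, then $w'+x\sgeq x$ (by adding $x$ to $0\sleq w'$) and $w'+x\geq u$, so the minimality built into the definition of $x\uenv u$ forces $x\uenv u \leq w'+x$, that is, $w \leq w'$.

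For (b), applying the formula from (a) twice and using (M2a) to pull the $-y$ outside the outer envelope reduces the identity $S_x S_y(u)=S_{x+y}(u)$ to the single claim
\[
(x+y)\uenv(y\uenv u) \,=\, (x+y)\uenv u\quad\text{whenever } x\sgeq 0.
\]
This is the main step and, I anticipate, the main obstacle. The inequality $\geq$ is easy: by (M4), $(x+y)\uenv(y\uenv u)$ is $\sgeq$-above $x+y$ and $\geq$-above $y\uenv u\geq u$, so it dominates $(x+y)\uenv u$ by the latter's defining minimality. For $\leq$ I need to show that $(x+y)\uenv u$ is itself an admissible upper bound in the definition of $(x+y)\uenv(y\uenv u)$; clearly $(x+y)\uenv u\sgeq x+y$, and the non-trivial point is $(x+y)\uenv u \geq y\uenv u$, which in turn requires $(x+y)\uenv u\sgeq y$ and $(x+y)\uenv u \geq u$. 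The second is (M4), while the first follows from the chain $(x+y)\uenv u\sgeq x+y\sgeq y$, where the rightmost inequality is precisely where the hypothesis $x\sgeq 0$ enters. The commutativity assertion when both $x,y\sgeq 0$ is then automatic since $x+y=y+x$.

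For (c), (M2a) yields $(x+a)\uenv(u+a) = a + x\uenv u$, so the $a$ cancels in $S_{x+a}(u+a) = (x+a)\uenv(u+a) - (x+a)$, leaving $x\uenv u - x = S_x(u)$. Finally, (d) follows by combining (b) and (c): since $u\sleq v$ means $v-u\sgeq 0$, part (b) gives $S_{v-u}S_x(v) = S_{(v-u)+x}(v)$; applying (c) with the shift $a = u-v$ (so that $(x+v-u)+a = x$ and $v+a = u$) then collapses this to $S_x(u)$.
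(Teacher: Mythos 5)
Your proof is correct and follows essentially the same route as the paper's: all four parts rest on the minimality characterizations, with (b) proved by the same two-sided "this element lies in the defining set, hence dominates the minimum" argument (you phrase it as the envelope identity $(x+y)\uenv(y\uenv u)=(x+y)\uenv u$, the paper phrases it directly in terms of $S$), and (c), (d) obtained by the same shifts. The only cosmetic difference is in the minimality step of (a), where you appeal directly to the definition of $x\uenv u$ as a minimum while the paper routes through (M5) and (M2); both are valid.
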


\begin{bproof}
\begin{enumerate}[(a)]
\item
Let $A=\{w\sgeq 0:u\leq w+x\}$. Let $m=x\uenv u -x$.
The equality $x\uenv u -x=u-u\lenv x$ follows immediately from (M1). First we note that 
$m\sgeq 0$ and
$$
m+x=x\uenv u -x+x=x\uenv u \geq u,
$$
and so $m\in A$. Assume then that $z\in A$. Then $u\leq z+x$ and since $z\sgeq 0$, we also have $u\sleq z+u$. By (M5) and (M2) it now follows that $u\leq (z+u)\lenv (z+x) = z+ u\lenv x$ and this implies that $m=u-u\lenv x\leq z$. This shows that $m=S_x(u)=\min\{w\sgeq 0:u\leq w+x\}$.
\item 
By part (a) we have $u\leq y\uenv u= (y\uenv u -y)+y =S_y(u)+y$ for all $y,u\in G$, and similarly $S_y(u)\leq S_x S_y(u) +x$. It follows that $u\leq S_x S_y(u) +x+y$ and by the definition of $S_x$ we obtain the inequality $S_{x+y} (u)  \leq S_x S_y (u)$. Similarly, exchanging the roles of $x$ and $y$ gives $S_{x+y} (u)  \leq S_y S_x (u)$.
On the other hand, we have $u\leq S_{x+y} (u) +x+y$, and if $x\sgeq 0$ then $S_{x+y} (u) +x\sgeq 0$, and it follows again from the definition that $S_y(u) \leq S_{x+y} (u) +x$. Hence, $S_x S_y (u)\leq S_{x+y} (u)$ and so the equality $S_x S_y (u)=  S_{x+y}(u)$  holds for all $u,y\in G$. If also $y\sgeq 0$ then we can exchange the roles of $x$ and $y$ above to get $S_x S_y (u)=S_y S_x (u)= S_{x+y} (u)$ for all $u\in G$.
\item 
The translation invariance property follows immediately from the definition of $S_x$. 
\item 
Let $u\sleq v$. Then $a=v-u\sgeq 0$ and using (b) and (c) gives
$$
S_x(u)=S_{x+a}(u+a)=S_aS_x(u+a)=S_{v-u}S_x(v).
$$
\end{enumerate}
\end{bproof}

Using the above results for $S_x$ we can prove several additional properties of the mixed envelopes. We begin with the following theorem, which gives two important inequalities for the mixed envelopes. They were first studied in mixed lattice semigroups by Arsove and Leutwiler \cite[Theorem 3.2]{ars}. Later, Eriksson--Bique showed that they hold also in a group extension of a mixed lattice semigroup, and therefore in every regular mixed lattice group \cite[Theorem 3.5]{eri1}. We will now prove that the inequalities hold more generally in every quasi-regular mixed lattice group. Later on, these inequalities will play an important role in the development of the theory.

\begin{thm}\label{new_ineq}
Let $G$ be a quasi-regular mixed lattice group. If $x\sleq y$ then the inequalities
$$
u\lenv x \sleq u\lenv y \qquad \text{and} \qquad u\uenv x \sleq u\uenv y \qquad 
$$
hold for all $u\in G$.
\end{thm}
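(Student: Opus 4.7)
The plan is to use Proposition~\ref{s_u} to translate the desired specific-order inequality for envelopes into an inequality between the auxiliary elements $S_x(u)$ and $S_y(u)$, and then to exploit (M8b) to conclude.

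First, I would handle the $\lenv$ case. Recall from Proposition~\ref{s_u}(a) that $u \lenv x = u - S_x(u)$ and $u \lenv y = u - S_y(u)$. Hence the claim $u \lenv x \sleq u \lenv y$ is equivalent to $S_y(u) \sleq S_x(u)$, that is, to $S_x(u) - S_y(u) \sgeq 0$. The hypothesis $x \sleq y$ means that $a := y - x \sgeq 0$, so by Proposition~\ref{s_u}(b) applied with $a$ in place of the positive element,
\eq{
S_y(u) \, = \, S_{x+a}(u) \, = \, S_a S_x(u).
}
Writing $w = S_x(u)$, the definition gives $w \sgeq 0$, and $S_a(w) = a \uenv w - a$, so by (M1),
\eq{
w - S_a(w) \, = \, w + a - a\uenv w \, = \, a \lenv w.
}
Since $a\sgeq 0$ and $w\sgeq 0$, property (M8b) applied with $z=0$ yields $a\lenv w\sgeq 0$, so $S_y(u) = S_a(w) \sleq w = S_x(u)$, and the claim follows.

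For the upper envelope, the cleanest route is via (M3). From $x\sleq y$ we get $-y\sleq -x$ (since $(-x)-(-y) = y-x \sgeq 0$). Applying the already-proved lower-envelope inequality to the elements $-u, -y, -x$ gives
\eq{
(-u)\lenv(-y) \,\sleq\, (-u)\lenv(-x).
}
Negating both sides reverses $\sleq$, and then (M3) identifies $-((-u)\lenv(-x)) = u\uenv x$ and $-((-u)\lenv(-y)) = u\uenv y$, producing $u\uenv x \sleq u\uenv y$.

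The main point — and the only nontrivial step — is the reduction $S_y(u) = S_a S_x(u)$, which crucially requires the composition rule in Proposition~\ref{s_u}(b) in the one-sided form that only needs one of the arguments to be specifically positive; after that, the conclusion drops out immediately from (M1) and (M8b). This is where quasi-regularity is essential, since (M8b) is precisely one half of what distinguishes quasi-regular mixed lattice groups from merely pre-regular ones.
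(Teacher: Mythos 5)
Your proposal is correct and follows essentially the same route as the paper's proof: reduce the claim to $S_x(u)-S_y(u)\sgeq 0$, apply the composition rule of Proposition~\ref{s_u}(b) with the positive increment $a=y-x$, and conclude with (M8b); your explicit (M3) duality for the upper envelope is just a spelled-out version of the step the paper dismisses as ``similar.'' One harmless slip: since the lower envelope is not commutative, $w-S_a(w)=w\lenv a$ rather than $a\lenv w$, but both are specifically positive when $a,w\sgeq 0$ by (M8b), so the conclusion is unaffected.
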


\begin{bproof}
First we observe that the first inequality is equivalent to
$$
0 \sleq u\lenv y -u\lenv x = (u-u\lenv x)-(u-u\lenv y) = S_x(u)-S_y(u). 
$$
Now if $x\sleq y$ then 
$y=a+x$, where $a=y-x\sgeq 0$. 
Using the properties of the mapping $S_x$ we get
$$
S_y(u)=S_{x+a}(u)=S_a(S_x(u))=S_{y-x}(S_x(u))=S_x(u)-S_x(u)\lenv (y-x),
$$
and this yields 
$$
S_x(u)-S_y(u)= 
S_x(u)\lenv (y-x)=(u-u\lenv x)\lenv (y-x).
$$
Hence we have  $u\lenv y -u\lenv x =(u-u\lenv x)\lenv (y-x)$. Now $u-u\lenv x\sgeq  0$ and $y-x\sgeq 0$, and since $G$ is quasi-regular, 
it follows by (M8b) that $(u-u\lenv x)\lenv (y-x)\sgeq 0$. Thus we have shown that  $u\lenv y -u\lenv x \sgeq 0$, or $u\lenv x \sleq u\lenv y$.

The proof of the other inequality is similar.
\end{bproof}

The next example shows that the assumptions in the last theorem cannot be weakened, that is, the inequalities do not hold if $G$ is not quasi-regular.

\begin{example2}\label{prereg_ei_p7}
Let $G=(\Z\times \Z,\leq,\sleq)$ and define partial orders $\leq$ and $\sleq$ as follows. If $x=(x_1,x_2)$ and $y=(y_1,y_2)$ then $x\leq y$ iff $x_1\leq y_1$ and $x_2\leq y_2$. In addition, $x\sleq y$ iff $x=y$ or $y_1\geq x_1+1$ and $y_2\geq x_2+1$. 
Then $G$ is a mixed lattice group which is pre-regular but 
not quasi-regular (see \cite[Example 2.23]{jj1}). 
If $x=(1,0)$, $y=(2,1)$ and $u=(0,0)$ then $u\uenv x=(1,1)$ and $u\uenv y=(2,1)$. Now $x\sleq y$ but $u\uenv x \sleq u\uenv y$ does not hold.
\end{example2}

The inequalities in Theorem \ref{new_ineq} have several implications. For example, the following one-sided associative and distributive laws hold for the mixed envelopes. The proof is exactly the same as in the mixed lattice semigroup case, see \cite[Theorems 3.3 and 3.5]{ars}.

\begin{thm}
If $G$ is a quasi-regular mixed lattice group then the following one-sided associative laws
$$
(x\lenv y)\lenv z \geq x\lenv (y\lenv z) \quad \textrm{and} \quad (x\uenv y)\uenv z \leq x\uenv (y\uenv z)
$$
and one-sided distributive laws
$$
x\lenv(y\uenv z) \geq (x\lenv y)\uenv (x\lenv z) \quad \textrm{and} \quad 
x\uenv(y\lenv z) \leq (x\uenv y)\lenv (x\uenv z)
$$
hold for all $x,y,z\in G$.
\end{thm}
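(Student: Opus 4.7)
The plan is to prove the two inequalities involving $\lenv$ as the outer envelope directly, and then obtain the other two by a symmetry argument based on (M3). In both direct proofs, Theorem \ref{new_ineq} is the essential ingredient, since it is what lets us promote an initial-order relation into the specific-order relation needed to land in the defining set of a mixed envelope.

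For the first associative law $(x\lenv y)\lenv z \geq x\lenv (y\lenv z)$, I set $a = x\lenv(y\lenv z)$ and show that $a$ satisfies the two defining conditions of $(x\lenv y)\lenv z$. From (M4) we immediately get $a \sleq x$ and $a \leq y\lenv z \leq z$ (the last step by (M4) again). The crucial step is to show $a \sleq x \lenv y$ rather than only $a \leq x \lenv y$. This is exactly what Theorem \ref{new_ineq} provides: since $y\lenv z \sleq y$ by (M4), the theorem gives $x \lenv (y\lenv z) \sleq x\lenv y$, that is $a \sleq x \lenv y$. Combining $a \sleq x\lenv y$ with $a \leq z$ and using the definition of $(x\lenv y)\lenv z$ as a maximum, we conclude $a \leq (x\lenv y)\lenv z$.

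For the first distributive law $x\lenv(y\uenv z) \geq (x\lenv y)\uenv (x\lenv z)$, the idea is to show that $x\lenv(y\uenv z)$ belongs to the set whose minimum is $(x\lenv y)\uenv(x\lenv z)$, i.e.\ that $x\lenv y \sleq x\lenv(y\uenv z)$ and $x\lenv z \leq x\lenv(y\uenv z)$. The first inequality is a direct application of Theorem \ref{new_ineq} to $y \sleq y\uenv z$ (which holds by (M4)). The second follows because $x\lenv z$ lies in the defining set of $x\lenv(y\uenv z)$: indeed $x\lenv z \sleq x$ and $x\lenv z \leq z \leq y \uenv z$, so it is dominated by the maximum.

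The remaining two inequalities $(x\uenv y)\uenv z \leq x\uenv(y\uenv z)$ and $x\uenv(y\lenv z) \leq (x\uenv y)\lenv(x\uenv z)$ are obtained by substituting $(-x,-y,-z)$ in place of $(x,y,z)$ in the two inequalities already proved and then using (M3), which converts each $\lenv$ into a $\uenv$ (and reverses the initial-order inequality through the leading sign change). The main obstacle throughout is the asymmetry of the mixed envelopes, which prevents the symmetric semigroup-style manipulations from being used naively; the role of Theorem \ref{new_ineq} is precisely to supply, in a quasi-regular mixed lattice group, the controlled interaction between the two orderings that makes the bounds come out in the specific rather than just the initial order.
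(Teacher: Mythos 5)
Your proof is correct, and it takes the route the paper intends: the paper does not write out the argument but refers to the mixed lattice semigroup case in Arsove--Leutwiler, noting that these laws are consequences of the inequalities of Theorem \ref{new_ineq}, which is exactly the key step in each of your two direct arguments. The verification that the candidate element lies in the defining set of the relevant envelope (using (M4) together with Theorem \ref{new_ineq} to upgrade $\leq$ to $\sleq$ where needed) and the reduction of the $\uenv$ statements to the $\lenv$ statements via (M3) are both sound.
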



\blue{ 
In the sequel, $\sup E$ and $\inf E$ stand for the supremum and infimum of a subset $E$ with respect to the initial order $\leq$. The supremum and infimum of $E$ with respect to the specific order $\sleq$ are denoted by $\spsup E$ and $\spinf E$, respectively. 

The following result is originally due to N. Boboc and A. Cornea (see \cite[Proposition 2.1.4]{boboc}). The more general mixed lattice version is given in \cite[Theorem 4.2]{ars}.

\begin{lemma2}\label{boboc}
Let $E$ be a subset of a quasi-regular mixed lattice group $G$ such that $u_0=\sup E$ exists in $G$. If $w\in G$ is an element such that $x\sleq w$ for all $x\in E$ then $u_0\sleq w$. Similarly, if $v_0=\inf E$ exists and $w\in G$ is such that $w\sleq x$ for all $x\in E$ then $w\sleq v_0$.
\end{lemma2}

\begin{bproof}
If $u_0=\sup E$ then $x\sleq u_0$ for all $x\in E$, and so if $x\sleq w$ for all $x\in E$ then by (M5), $x\leq w\lenv u_0 \leq u_0$ for all $x\in E$. This implies that $u_0=w\lenv u_0\sleq w$. The result concerning the infimum can be proved by a similar argument.
\end{bproof}

We obtain stronger results if the supremum and specific supremum of a subset are equal. 

\begin{defn}
Let $E$ be a subset of a mixed lattice group $G$ such that $\sup E$ and $\spsup E$ exist in $G$, and $u_0=\sup E = \spsup E$. The element $u_0$ is called the \emph{strong supremum} of $E$ and it is denoted by $u_0 =\strsup E$. The \emph{strong infimum} $v_0$ of $E$ is defined similarly, and it is denoted by $v_0=\strinf E$. 
\end{defn}

The following result can now be proved exactly the same way as in the theory of mixed lattice semigroups (see \cite{ars}, pp.23). We include the proof here for completeness.

\begin{prop}\label{strongsup_properties}
Let $E$ be a subset of a quasi-regular mixed lattice group $G$ such that $u_0=\strsup E$ exists in $G$. 
Then for all $x\in G$ 
$$
\strsup_{u\in E}(x\lenv u)=x\lenv u_0 \quad \text{and} \quad \strsup_{u\in E}(x\uenv u)=x\uenv u_0.
$$ 
Similarly, if $v_0=\strinf E$ exists then for all $x\in G$ 
$$
\strinf_{u\in E}(x\lenv u)=x\lenv v_0 \quad \text{and} \quad \strinf_{u\in E}(x\uenv u)=x\uenv v_0.
$$ 
\end{prop}

\begin{bproof}
If $x\in G$ and $u_0=\strsup E$ then By (M5), $x\lenv u\leq x\lenv u_0$ and $u\uenv x \leq u_0 \uenv x$ for all $u\in E$. Let $w$ be any element such that $x\lenv u\leq w$ for all $u\in E$. Then by (M1) we have 
$$
x+u=x\lenv u + u\uenv x  \leq x\lenv u + u_0\uenv x \leq w + u_0\uenv x  
$$
for all $u\in E$. Thus $w + u_0\uenv x -x$ is a $(\leq)$-upper bound of $E$, and since $u_0=\sup E$, by (M1) this implies that 
$$
x\lenv u_0 + u_0\uenv x = u_0+x  \leq (w + u_0\uenv x -x)+x=w + u_0\uenv x.
$$ 
Hence $x\lenv u_0 \leq w$, and this shows that $x\lenv u_0 = \sup  \{x\lenv u:u\in E\}$. On the other hand, by Theorem \ref{new_ineq} we have $x\lenv u\sleq x\lenv u_0$ for all $u\in E$. If $x\lenv u\sleq w$ for all $u\in E$ then it follows by Lemma \ref{boboc} that $x\lenv u_0\sleq w$, and so $x\lenv u_0 = \spsup  \{x\lenv u:u\in E\}$. This shows that $x\lenv u_0 = \strsup  \{x\lenv u:u\in E\}$. The other identities can be proved in a similar manner.
\end{bproof}
}


\blue{
In the sequel, we will use the following notation. The inequalities $x\sleq y$ and $y\leq z$ are written more concisely as $x\sleq y \leq z$. Similarly, the notation $x\leq y \sleq z$ means that $x\leq y$ and $y\sleq z$.
}

We conclude this section by showing that mixed lattice groups have the dominated decomposition property. 
We actually have different variants of this property that will be useful.

\begin{thm}\label{riesz_dec}
Let $G$ be a quasi-regular mixed lattice group.
\begin{enumerate}[(a)]
\item
Let $u\sgeq 0$,\, $v_1\geq 0$\, and \,$v_2\sgeq 0$\, be elements of $G$ satisfying $u\leq v_1+v_2$. Then there exist elements $u_1$ and $u_2$ 
such that \,$0\leq u_1\leq v_1$, \;$0\sleq u_2\leq v_2$\; and \; $u=u_1+u_2$. Moreover, if $v_1\sgeq 0$ then $u_1 \sgeq 0$, and if $u\sleq v_1+v_2$ then $u_2\sleq v_2$.
\item
Let $u\geq 0$,\, $v_1\sgeq 0$\, and \,$v_2\geq 0$\, be elements of $G$ satisfying $u\sleq v_1+v_2$. Then there exist elements $u_1$ and $u_2$  
such that \,$0\leq u_1\sleq v_1$, \;$0\leq u_2\leq v_2$\; and \; $u=u_1+u_2$. Moreover, if $u\sgeq 0$ then $u_1 \sgeq 0$. 
\end{enumerate}
\end{thm}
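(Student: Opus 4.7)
The plan is to produce each decomposition explicitly using an appropriate mixed lower envelope, with the \emph{choice} of envelope dictated by which of $u$ and $v_1$ is specifically positive, since $\lenv$ is not commutative. In both cases, once $u_1$ is defined I would set $u_2=u-u_1$, and the four properties to verify would reduce to: membership of $0$ in the defining set (for $u_1\geq 0$), an application of (M4) (for $u_1\leq v_1$ or $u_1\sleq v_1$, together with $u_2\geq 0$ or $u_2\sgeq 0$), and the maximum characterization of the envelope with test element $u-v_2$ (for $u_2\leq v_2$). The moreover clauses would follow from (M8b).

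For part (a), I would take $u_1=u\lenv v_1$ and $u_2=u-u_1$. By (M4), $u_1\sleq u$ and $u_1\leq v_1$, so automatically $u_2\sgeq 0$ and $u_1\leq v_1$. For $u_1\geq 0$, I would plug $w=0$ into the defining set $\{w:w\sleq u,\,w\leq v_1\}$: $0\sleq u$ holds because $u\sgeq 0$, and $0\leq v_1$ holds by hypothesis, so $u\lenv v_1\geq 0$. The key estimate $u_2\leq v_2$ is equivalent to $u-v_2\leq u\lenv v_1$, and here I would test $w=u-v_2$ against the same defining set: $u-v_2\sleq u$ because $v_2\sgeq 0$, and $u-v_2\leq v_1$ because $u\leq v_1+v_2$, so maximality yields $u-v_2\leq u_1$. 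For the first moreover clause, the extra hypothesis $v_1\sgeq 0$ combined with $0\sleq u$ lets (M8b) upgrade the conclusion to $0\sleq u\lenv v_1=u_1$. For the second, the stronger hypothesis $u\sleq v_1+v_2$ gives $u-v_2\sleq v_1$; together with $u-v_2\sleq u$, another application of (M8b) yields $u-v_2\sleq u\lenv v_1=u_1$, i.e.\ $u_2\sleq v_2$.

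For part (b) the roles of $u$ and $v_1$ with respect to specific positivity are swapped, so I would take $u_1=v_1\lenv u$ and $u_2=u-u_1$. By (M4) we get $u_1\sleq v_1$ and $u_1\leq u$, hence $u_2\geq 0$. Testing $w=0$ in $\{w:w\sleq v_1,\,w\leq u\}$ gives $0\leq u_1$ since $v_1\sgeq 0$ and $u\geq 0$. For $u_2\leq v_2$, i.e.\ $u-v_2\leq v_1\lenv u$, I would check that $u-v_2\sleq v_1$ (from $u\sleq v_1+v_2$) and $u-v_2\leq u$ (from $v_2\geq 0$), and invoke maximality of $v_1\lenv u$. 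Finally, if $u\sgeq 0$, then (M8b) applied to $0\sleq v_1$ and $0\sleq u$ gives $0\sleq v_1\lenv u=u_1$. The only real obstacle is bookkeeping: one must remember that $a\lenv b$ uses $\sleq$ in the first slot and $\leq$ in the second, so the correct orientation of the envelope is crucial; once that is settled, every step is a one-line application of (M4), the defining set of the envelope, or (M8b).
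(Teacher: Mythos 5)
Your proposal is correct and uses essentially the same decomposition as the paper: $u_1=u\lenv v_1$ in (a) and $u_1=v_1\lenv u$ in (b), with $u_2=u-u_1$. The only (immaterial) difference is that you verify $u_2\leq v_2$ by testing $w=u-v_2$ against the defining set of the lower envelope, whereas the paper rewrites $u_2$ as an upper envelope and uses monotonicity; both are one-line arguments and all your applications of (M4) and (M8b) check out.
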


\begin{bproof}
\emph{(a)}\;
The element $u_1=u\lenv v_1$ satisfies $u_1\leq v_1$ and $u_1\geq 0$ (and $u_1\sgeq 0$ if $v_1\sgeq 0$). 
Let $u_2=u-u_1$. Then $u=u_1+u_2$ and $u_2\sgeq 0$, since $u_1\sleq u$. It remains to show that $u_2\leq v_2$. For this, we note that $0\sleq v_2$ and $u-v_1\leq v_2$, and so we have
$$
u_2=u-u\lenv v_1 = u+ (-u)\uenv(-v_1) = 0\uenv (u-v_1) \leq v_2\uenv v_2 =v_2.
$$
If $u\sleq v_1+v_2$ then $u-v_1\sleq v_2$ and we can replace $\leq$ by $\sleq$ in the above inequality, by (M8a).

\emph{(b)}\;
The element $u_1=v_1\lenv u$ satisfies $u_1\leq u$, $u_1\sleq v_1$ and $u_1\geq 0$ (and $u_1\sgeq 0$ if $u\sgeq 0$). It follows from  
$u\sleq v_1+v_2$ and $v_1\leq v_1+v_2$ that $u\uenv v_1\leq v_1+v_2$, and so if we set $u_2=u-u_1$ we have $u_2\geq 0$ and
$$
u_2=u-v_1\lenv u = u\uenv v_1 - v_1 \leq v_1+v_2 -v_1=v_2,
$$ 
and the proof is complete.
\end{bproof}

\section{Ideals and bands in a mixed lattice vector space}
\label{sec:s4}

A \emph{mixed lattice vector space} \blue{(or briefly, a \emph{mixed lattice space})} was defined in \cite{jj1} as a partially ordered real vector space $V$ with two partial orderings $\leq$ and $\sleq$ (called the initial order and the specific order, respectively) such that the mixed upper and lower envelopes $x\uenv y$ and $x\lenv y$, as defined in Section \ref{sec:s2}, exist in $V$ for all $x,y\in V$.

We recall that a subset $C$ of a vector space is called a \emph{cone} if \, (i)\;$\alpha C \subseteq C$ for all $\alpha \geq 0$, \, (ii)\;\;$C+C \subseteq C$ and \, (iii) \,$C\cap (-C)=\{0\}$. 
Cones play an important role in the theory of ordered vector spaces because a partial ordering in a vector space can be given in terms of the corresponding positive cone. For more information about cones and their properties we refer to \cite{cones}.  

Before we proceed, let us introduce some notation. If $V$ is a mixed lattice space then $V_p=\{x\in V:x\geq 0\}$ and $V_{sp}=\{x\in V:x\sgeq 0\}$ are the $(\leq)$-\emph{positive cone} and the $(\sleq)$-\emph{positive cone} of $V$, respectively. 
\blue{Accordingly, an element $x\in V_p$ is called \emph{positive}, and if $x\in V_{sp}$ then $x$ is said to be \emph{specifically positive}, or \emph{$(\sleq)$-positive}.} 
In mixed lattice spaces we are particularly interested in cones that are also mixed lattice semigroups.

\begin{defn}
A cone \blue{$C\subseteq V_{sp}$} in a mixed lattice space $V$ is called a \emph{mixed lattice cone} if $x\uenv y$ and $x\lenv y$ belong to $C$ whenever $x,y\in C$. 
\end{defn}

If $E$ is any subset of $V$ then we define $E_p=E\cap V_p$ and $E_{sp}=E\cap V_{sp}$. 
The notions of regular and quasi-regular mixed lattice space are defined similarly as in mixed lattice groups. 
Hence, using the notation just introduced, $V$ is \emph{quasi-regular} if $V_{sp}$, the positive cone associated with the specific order, is a mixed lattice cone.  
If $C$ is a cone in a vector space $V$ then $S=C-C$ is called the \emph{subspace generated by} $C$. A mixed lattice space $V$ is \emph{regular} if $V_{sp}$ is a generating mixed lattice cone, that is, $V=V_{sp}-V_{sp}$.

The rules (M1)--(M8) for the mixed envelopes 
remain valid in quasi-regular 
mixed lattice spaces. In a mixed lattice space we can add to the list the following rules concerning the scalar multiplication. If $V$ is a mixed lattice space and $0\leq a\in \R$ then the following hold for all $x,y\in V$

$$ 
\begin{array}{ll}
(M9)  &  (ax)\lenv(ay) = a(x\lenv y) \; \textrm{ and } \; (ax)\uenv(ay) = a(x\uenv y) \quad (a\geq 0). \bigskip \\ 
\end{array} 
$$

The notions of upper and lower parts of an element and the generalized asymmetrical absolute values were introduced in \cite{jj1}. 
If $x\in V$ then the elements $^{u}x=x\uenv 0$ and $^{l}x=(-x)\uenv 0$ are called the \emph{upper part} and \emph{lower part} of $x$, respectively. Similarly, the elements $x^{u}=0\uenv x$ and $x^{l}=0\uenv (-x)$ are called the \emph{specific upper part} and \emph{specific lower part} of $x$, respectively. These play a similar role as the positive and negative parts of an element in a Riesz space. The \emph{generalized absolute values} of $x$ are then defined as\, $\ul{x}= \lupp{\!x} + \rlow{x}$  and  $\lu{x}=\llow{\!x} + \rupp{x}$. The elements $\ul{x}$ and $\lu{x}$ are distinct, in general, and they are ''asymmetrical'' in the sense that $\ul{x} =\lu{(-x)}$ for all $x$.

The upper and lower parts and the generalized absolute values 
have several important basic properties, which were proved in \cite{jj1}. These properties are given in the next theorem.

\begin{thm}\label{absval}
Let $V$ be a quasi-regular 
mixed lattice space and $x\in V$. Then the following hold.
\begin{enumerate}[(a)]
\item
\; $\lupp{x}\,=\,\llow{(-x)}$ \quad \textrm{and} \quad $\rupp{x}\,=\,\rlow{(-x)}$. 
\item
\; $x \, = \, \rupp{x} \, -\,\llow{x} \, = \, \lupp{x} \, - \, \rlow{x}$.
\item
\;  $\ul{x} \, =\, \lupp{x}\uenv \rlow{x} \, =\, \lupp{x} \,+\,\rlow{x}$ \quad \textrm{and} \quad $\lu{x} \, =\,\llow{x}\uenv \rupp{x} \, =\, \llow{x} \,+\,\rupp{x}$.
\item
\; $\ul{x} \,=\,\lu{(-x)}$.
\item
\;
$\lupp{x}+\lupp{y} \geq \lupp{(x+y)}$, \quad $\rlow{x}+\rlow{y} \geq \rlow{(x+y)}$ \quad and \quad $\ul{x}+\ul{y} \geq \ul{(x+y)}$.
\item
\;
$\rupp{x}+\rupp{y} \geq \rupp{(x+y)}$, \quad $\llow{x}+\llow{y} \geq \llow{(x+y)}$ \quad and \quad $\lu{x}+\lu{y} \geq \lu{(x+y)}$.
\item
\; $\rupp{x}\lenv \llow{x} \, = \, 0\, = \, \rlow{x}\lenv \lupp{x}$.
\item
\; $\rupp{x}\uenv \llow{x}\, = \,\lupp{x}\,+\,\llow{x}\, = \,\rlow{x}\,+\,\rupp{x}\, = \,\rlow{x}\uenv \lupp{x}$
\item
\; $x\sgeq 0$ \, if and only if \, $x=\lu{x}=\ul{x}=\lupp{x}=\rupp{x}$\, and \,$\llow{x}=\rlow{x}=0$.
\item
\; $x\geq 0$ \, if and only if \, $x=\ul{x}=\lupp{x}$\, and \,$\rlow{x}=0$.
\item
\; $\ul{x}\geq 0$\,  and  \,$\lu{x}\geq 0$. Moreover,  $\ul{x}=\lu{x}= 0$\,  if and only if  $x=0$.
\item
\; $\ul{(ax)}=a\,\ul{x}$  \, and \, $\lu{(ax)}=a\,\lu{x}$ \, \textrm{for all} $a\geq 0$.
\item
\; $\ul{(ax)}=|a|\,\lu{x}$  \, and \, $\lu{(ax)}=|a|\,\ul{x}$ \, \textrm{for all} $a<0$.
\item
\; $2(x\lenv y) = x+y-\lu{(x-y)}$ \quad and \quad $2(y\lenv x) = x+y-\ul{(x-y)}$.
\end{enumerate}
\end{thm}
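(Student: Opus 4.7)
The plan is to reduce all fourteen properties to the rules (M1)--(M9) and quasi-regularity by exploiting two recurring identities drawn from (M3), namely $x \lenv 0 = -\llow{x}$ and $0 \lenv x = -\rlow{x}$, together with the translation rule (M2). Parts (a) and (d) are immediate substitutions: $\llow{(-x)} = (-(-x)) \uenv 0 = \lupp{x}$ and $\rlow{(-x)} = \rupp{x}$, after which (d) reads $\ul{x} = \lupp{x} + \rlow{x} = \llow{(-x)} + \rupp{(-x)} = \lu{(-x)}$. Part (b) comes from (M1) applied to the pairs $(0,x)$ and $(x,0)$, combined with the two conversion formulas above.

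The technical core is (g), from which (c), (h), and the identity (n) follow quickly. Using $\rupp{x} = \llow{x} + x$ from (b) together with (M2b),
\[
\rupp{x} \lenv \llow{x} = (\llow{x}+x) \lenv \llow{x} = \llow{x} + (x \lenv 0) = \llow{x} - \llow{x} = 0,
\]
and an analogous computation with $\lupp{x} = \rlow{x} + x$ gives $\rlow{x} \lenv \lupp{x} = 0$. Feeding these into (M1) applied to the pair $(\lupp{x}, \rlow{x})$ yields $\lupp{x} \uenv \rlow{x} = \lupp{x} + \rlow{x} = \ul{x}$, and the dual pair delivers the formula for $\lu{x}$ in (c). The first two equalities in (h) are obtained by the same translation trick with $\uenv$ in place of $\lenv$, and the middle equality $\lupp{x} + \llow{x} = \rlow{x} + \rupp{x}$ is a rearrangement of (b). For (n), applying (M2b) with shift $-x$ gives $x \lenv y = x + (0 \lenv (y-x)) = x - \rupp{(x-y)}$, and shifting by $-y$ gives $x \lenv y = y - \llow{(x-y)}$; adding yields $2(x \lenv y) = x + y - \lu{(x-y)}$, and (d) converts this to the companion formula.

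The order-theoretic parts (i), (j), (k) come from (M6), (M7), and (b). If $x \sgeq 0$, then (M6) gives $0 \uenv x = x$ and $x \lenv 0 = 0$, so $\rupp{x} = x$ and $\llow{x} = 0$; combined with (b) these force $\rlow{x} = 0$ and $\lupp{x} = x$, whence $\ul{x} = \lu{x} = x$. Conversely any of the listed equalities forces $x \sgeq 0$ through $\rupp{x} = 0 \uenv x \sgeq 0$, and similarly $\lupp{x} = x \uenv 0 \geq 0$ settles (j). Part (k) follows from (c) since $\lupp{x}, \rlow{x} \geq 0$ (the latter by (M7)), and $\ul{x} = 0$ forces both summands to vanish, hence $x = 0$ by (b). Subadditivity in (e) and (f) is read off from the minimal characterizations: $\lupp{x} + \lupp{y}$ is specifically above $x+y$ and initially above $0$, hence dominates $\lupp{(x+y)}$, and similarly for the other parts; summing then yields subadditivity of $\ul{\cdot}$ and $\lu{\cdot}$. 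Finally, the homogeneity statements (l), (m) follow directly from (M9), together with $ax = (-a)(-x)$ and (a) in the case $a < 0$.

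The main obstacle is purely notational bookkeeping: the asymmetry of $\uenv$ and $\lenv$ forces one to track at every step which argument plays the specifically-ordered role, and the four distinct parts $\lupp{x}, \rupp{x}, \llow{x}, \rlow{x}$ must not be conflated. The single delicate algebraic step is (g), which depends on applying (M2b) with the correct placement of arguments and using the correct conversion $x \lenv 0 = -\llow{x}$ (rather than $-\rlow{x}$); once (g) and its dual are secured, everything else is routine.
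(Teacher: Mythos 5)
This theorem is not proved in the present paper at all: it is quoted from \cite{jj1} with the remark that the properties ``were proved in \cite{jj1}'', so there is no in-paper argument to compare yours against. Taken on its own terms, your derivation from (M1)--(M9) is essentially the standard one and almost everything checks out: the conversion identities $x\lenv 0=-\llow{x}$ and $0\lenv x=-\rlow{x}$ from (M3), the translation computation establishing (g), the deduction of (c), (h) and (n) from (g) via (M1) and (M2), and the minimality arguments for (e), (f), (l), (m) are all correct.

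The one step that does not work as written is the forward direction of (i). From $\rupp{x}=x$, $\llow{x}=0$ and (b) you cannot conclude $\rlow{x}=0$ and $\lupp{x}=x$: part (b) supplies only the single relation $x=\lupp{x}-\rlow{x}$, which leaves the pair $(\lupp{x},\rlow{x})$ undetermined. What is missing is the observation that $x\geq 0$ implies $x\uenv 0=x$ (equivalently $0\lenv x=0$). This does not come from (M6), which characterizes when $x\uenv y$ equals the \emph{second} argument; instead, note that the set $\{w:w\sleq 0,\ w\leq x\}$ defining $0\lenv x$ contains $0$ when $x\geq 0$, and every member satisfies $w\leq 0$ by (M7), so its maximum is $0$; hence $\rlow{x}=-(0\lenv x)=0$ and then $\lupp{x}=x$ by (b). The same observation is exactly what the forward implication of (j) rests on, and your write-up only argues the converse of (j). Since $x\sgeq 0$ implies $x\geq 0$ by (M7), establishing (j)'s forward direction first repairs (i) as well. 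This is a one-line fix, but as stated the appeal to ``(M6) combined with (b)'' is insufficient.
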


The next theorem was also proved in \cite{jj1}, but under more general assumptions. If $V$ is assumed to be quasi-regular then we have a sharper version of the same theorem, as given below. The proof is almost identical to the more general case, see \cite[Theorem 3.6]{jj1}.

\begin{thm}\label{disjoint}
Let $V$ be a quasi-regular mixed lattice space and $x\in V$. 
\begin{enumerate}[(a)]
\item 
If $x=u-v$ with $u\sgeq 0$ and $v\geq 0$, then $0\leq \rupp{x}\leq u$ and $0\leq v\leq \llow{x}$. If also $v\sgeq 0$ then $0\sleq \rupp{x}\sleq u$ and $0\leq \llow{x} \sleq v$.
\item 
Conversely, if $x=u-v$ with $u\lenv v=0$ then  $u= \rupp{x}$ and $v= \llow{x}$. 
\end{enumerate}
\end{thm}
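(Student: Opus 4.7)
My plan is to attack each part of Theorem~\ref{disjoint} by working directly from the definitions $\rupp{x} = 0 \uenv x$ and $\llow{x} = (-x) \uenv 0$, together with the characterization of the mixed upper envelope as the minimum (with respect to the initial order) of the corresponding set of elements.

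For part (a), the argument is simply to verify that the given $u$ and $v$ belong to the respective defining sets. Given $x = u-v$ with $u \sgeq 0$ and $v \geq 0$, we have $u \sgeq 0$ and $u = x+v \geq x$, so $u$ lies in the set whose minimum is $\rupp{x} = 0\uenv x$; hence $\rupp{x} \leq u$, and the bound $0 \leq \rupp{x}$ is immediate from (M4) via (M7). A parallel argument applied to $-x = v-u$ shows that $v$ is a candidate for $\llow{x} = (-x)\uenv 0$, since $v \geq 0$ and $v-(-x) = u \sgeq 0$, i.e., $-x \sleq v$; this gives the corresponding bound on $\llow{x}$. The sharpened specific-order inequalities under the extra assumption $v \sgeq 0$ then follow directly from (M8a): with $0 \sleq u$ and $x \sleq u$ (the latter because $u-x = v \sgeq 0$), (M8a) yields $\rupp{x} = 0\uenv x \sleq u$, and the symmetric statement for $\llow{x}$ proceeds in the same way.

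For part (b), my starting observation is that $u\lenv v = 0$ already supplies the positivity hypotheses of (a): by (M4), $0 = u\lenv v \sleq u$ and $0 = u\lenv v \leq v$, so $u\sgeq 0$ and $v\geq 0$. Hence (a) applies and gives $\rupp{x}\leq u$ as well as $\llow{x}\leq v$. Combining $x = \rupp{x}-\llow{x}$ from Theorem~\ref{absval}(b) with $x=u-v$, the element $w := u-\rupp{x} = v-\llow{x}\geq 0$ is well defined. The decisive step is to apply translation invariance (M2b) of the mixed lower envelope, together with the disjointness identity $\rupp{x}\lenv\llow{x} = 0$ from Theorem~\ref{absval}(g), to obtain
\begin{align*}
0 \;=\; u\lenv v \;=\; (w+\rupp{x})\lenv(w+\llow{x}) \;=\; w + (\rupp{x}\lenv\llow{x}) \;=\; w,
\end{align*}
which forces $u=\rupp{x}$ and $v=\llow{x}$.

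The main (and only mild) obstacle is recognizing that $u\lenv v = 0$ is strong enough to recover the specific-order positivity of $u$ needed to invoke (a); after that, the remainder is a mechanical combination of (M2b) with the already-proved identity $\rupp{x}\lenv\llow{x} = 0$. Nothing beyond the rules (M1)--(M9) and the basic properties of the asymmetric absolute value is required.
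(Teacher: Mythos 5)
Your proof is correct. The paper itself does not reprove this theorem---it defers to \cite[Theorem 3.6]{jj1}, remarking only that quasi-regularity sharpens the conclusions---so there is no in-paper argument to compare against; your route (checking that $u$ and $v$ lie in the defining sets of $0\uenv x$ and $(-x)\uenv 0$ for part (a), upgrading to $\sleq$ via (M8a) when $v\sgeq 0$, and in part (b) extracting $u\sgeq 0$, $v\geq 0$ from (M4), setting $w=u-\rupp{x}=v-\llow{x}$, and killing $w$ with (M2b) together with $\rupp{x}\lenv\llow{x}=0$) is the standard one and every step checks out against (M1)--(M9) and Theorem \ref{absval}. One remark: in part (a) you prove $\llow{x}\leq v$, whereas the statement as printed reads $0\leq v\leq\llow{x}$; the printed inequality is evidently a typo, since it is inconsistent with the refinement $0\leq\llow{x}\sleq v$ in the very next sentence and already fails in $\R$ for $x=0=1-1$. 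What you proved is the intended claim, and it is also exactly the inequality your part (b) needs to conclude $w\geq 0$ (though, as you note, the (M2b) computation forces $w=0$ regardless).
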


\blue{
\emph{For the remainder of this paper, we shall always assume that $V$ is a quasi-regular mixed lattice 
vector space unless otherwise stated.} Due to this convention, we shall drop the term ''quasi-regular'', and henceforth, by a mixed lattice space we mean a quasi-regular mixed lattice space. 
}

The notions of mixed lattice subspaces and ideals 
were introduced in \cite{jj1}. 
A subspace $S$ of a mixed lattice vector space $V$ is called a \emph{mixed lattice subspace} of $V$ if $x\uenv y$ and $x\lenv y$ belong to $S$ whenever $x$ and $y$ are in $S$. 
A subset $U\subseteq V$ is called \emph{($\leq$)-order convex}, if $x\leq z\leq y$ and $x,y\in U$ imply that $z\in U$. Similarly, a subset $U\subseteq V$ is called \emph{($\sleq$)-order convex}, if $x\sleq z\sleq y$ and $x,y\in U$ imply that $z\in U$. 
A subspace $A$ is ($\leq$)-order convex if and only if $0\leq y\leq x$ and $x\in A$ imply that $y\in A$. Similarly, a subspace $A$ is ($\sleq$)-order convex if and only if $0\sleq y\sleq x$ and $x\in A$ imply that $y\in A$.
If $A$ is a ($\leq$)-order convex mixed lattice
subspace of $V$ then $A$ is called a \emph{mixed lattice ideal} of $V$. Similarly, a ($\sleq$)-order convex mixed lattice subspace of $V$ is called a \emph{specific mixed lattice ideal} of $V$.
For brevity, these will be called simply \emph{ideals} and \emph{specific ideals}, respectively. This should not cause any confusion as we are mostly dealing with mixed lattice ideals in this paper. If we refer to other type of ideals (such as lattice ideals) then we will emphasize it accordingly.

To study the relationships between ideals and specific ideals, as well as the structure of mixed lattice spaces in more detail, we introduce the following definitions.

\begin{defn}\label{def_proper_sp_ideal}
\begin{enumerate}[(i)]
\item
A subspace $S$ is called \emph{regular}, if $S=S_{sp}-S_{sp}$.
\item
A subspace $S$ is called \emph{positively generated}, if $S=S_p -S_p$. 
\item
A subspace $A$ is called \emph{mixed--order convex} 
if $y\in A$ and $0\sleq x\leq y$ together imply that $x\in A$.
\item
A mixed--order convex mixed lattice subspace is called a \emph{quasi-ideal}.
\item
A specific ideal $A$ is called a \emph{proper specific ideal} if $A$ is not a quasi-ideal. Similarly, a quasi-ideal $A$ is called a  \emph{proper quasi-ideal} if $A$ is not an ideal. 
\end{enumerate}
\end{defn}

Every mixed lattice subspace is positively generated.  
It is also easy to see that a mixed lattice subspace $S$ is regular if and only if for every $x\in S$ there exists some $z\in S_{sp}$ such that $x\sleq z$. 
There are ideals that are not regular (Example \ref{rdisjoint_esim}). In many cases, proper specific ideals and proper quasi-ideals are regular, but it is not known whether or not this is true in general.

Before we proceed, we should remark that if $A$ is a subspace of $V$ then, in order to show that $A$ is a mixed lattice subspace it is sufficient to show that $\rupp{x}\in A$ for every $x\in A$. Indeed, if this holds and $x,y\in A$ then $y\uenv x = y+ \rupp{(x-y)}\in A$. Consequently, $x\lenv y =x+y- y\uenv x \in A$. This observation simplifies many of the proofs that follow.

Now the following result holds.

\begin{prop}\label{regular_oconvex}
Every regular $(\sleq)$-order convex subspace is a mixed lattice subspace, and hence a specific ideal.
\end{prop}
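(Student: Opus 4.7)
The plan is to use the observation made just before the proposition: to show a subspace $A$ is a mixed lattice subspace, it suffices to verify that $\rupp{x} \in A$ for every $x \in A$. So I would pick an arbitrary $x \in A$ and work to show $\rupp{x} \in A$, after which the definition of specific ideal gives the final conclusion immediately (since $A$ is already assumed $(\sleq)$-order convex and is a subspace).

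To get $\rupp{x} \in A$, I would exploit regularity to write $x = u - v$ with $u, v \in A_{sp}$, i.e., $u \sgeq 0$ and $v \sgeq 0$, both belonging to $A$. This is exactly the kind of decomposition that the second sentence of Theorem \ref{disjoint}(a) is tailored for: with $u \sgeq 0$ and $v \sgeq 0$, the theorem yields
\[
0 \sleq \rupp{x} \sleq u.
\]
Now I invoke $(\sleq)$-order convexity of $A$: since $u \in A$ and $0 \sleq \rupp{x} \sleq u$, we conclude $\rupp{x} \in A$.

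Having established $\rupp{x} \in A$ for every $x \in A$, the preceding remark shows $A$ is closed under both mixed envelopes $\uenv$ and $\lenv$, hence is a mixed lattice subspace. Combined with the assumption of $(\sleq)$-order convexity, $A$ is a specific ideal by definition. I do not anticipate any real obstacle here; the only mild subtlety is making sure one uses the strengthened ``$v \sgeq 0$'' clause of Theorem \ref{disjoint}(a), which is the reason regularity (rather than mere positive generation) is the right hypothesis.
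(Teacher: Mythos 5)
Your argument is correct and is essentially identical to the paper's own proof: both decompose $x=u-v$ with $u,v\in A_{sp}$ via regularity, apply Theorem \ref{disjoint}(a) to get $0\sleq \rupp{x}\sleq u$, use $(\sleq)$-order convexity to conclude $\rupp{x}\in A$, and finish with the remark preceding the proposition.
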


\begin{bproof}
If $x\in A$ then $x=u-v$ with $u,v\in A_{sp}$. By Theorem \ref{disjoint} we have $0\sleq \rupp{x} \sleq u$ and so  
it follows that $\rupp{x}\in A$. Hence, $A$ is a regular $(\sleq)$-order convex mixed lattice subspace, by the remark made before the proposition.
\end{bproof}

The notion of a mixed-order convex subspace is intermediate between ($\sleq$)-order convex and ($\leq$)-order convex subspaces. Every ($\leq$)-order convex subspace is mixed-order convex, and every mixed-order convex subspace is ($\sleq$)-order convex. In particular, every ideal is a quasi-ideal, and every quasi-ideal is a specific ideal.

The following simple lemma is useful in the study of mixed-order convex subspaces.

\begin{lemma2}\label{mixed-o-convex}
A subspace $A$ is mixed-order convex if and only if $0\leq x\sleq y$ with $y\in A$ imply that $x\in A$.
\end{lemma2}

\begin{bproof} 
Assume that $0\sleq x\leq y$ with $y\in A$ implies that $x\in A$. We observe that if $0\leq u\sleq v$ with $v\in A$ then $0\sleq v-u\leq v$, and this implies that $v-u\in A$ by assumption. Then $u=v-(v-u)\in A$ since $A$ is a subspace. The converse implication is similar.
\end{bproof}


Next we will study some basic properties of mixed lattice cones.

\begin{thm}\label{mlcone1}
Let $V$ be a 
mixed lattice space and $C$ a mixed lattice cone in $V$. Then the subspace generated by $C$ is a mixed lattice subspace. 
Conversely, if $S$ is a mixed lattice subspace of $V$ then $S_{sp}$ is a mixed lattice cone in $V$.
\end{thm}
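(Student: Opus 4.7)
The plan is to reduce both directions to routine verifications by exploiting the shortcut noted just before Proposition \ref{regular_oconvex}: once a set is already known to be a subspace, showing that it is a mixed lattice subspace requires only checking closure under $x \mapsto \rupp{x}$.

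For the forward direction, I would first verify that $S = C - C$ is a linear subspace, which is immediate from the cone axioms for $C$ (closure under addition and nonnegative scalar multiples, hence closure under negation). Given $x \in S$, I would write $x = u - v$ with $u, v \in C$ and apply the translation rule (M2a) with shift $v$ to obtain
\[
v + \rupp{x} \,=\, v + (0 \uenv x) \,=\, v \uenv (v + x) \,=\, v \uenv u.
\]
Since $C$ is a mixed lattice sub-semigroup of $V$, we have $v \uenv u \in C$, so $\rupp{x} = v \uenv u - v$ lies in $C - C = S$, and that finishes the forward direction.

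For the converse, I would set $C := S_{sp}$ and verify the three cone axioms together with closure under the mixed envelopes. The cone axioms transfer from $V_{sp}$ by intersecting with the subspace $S$: sums and nonnegative scalar multiples of elements of $C$ remain in both $S$ and $V_{sp}$, and $C \cap (-C) \subseteq V_{sp} \cap (-V_{sp}) = \set{0}$. For closure under $\uenv$ and $\lenv$, quasi-regularity of $V$ guarantees $x \uenv y, x \lenv y \in V_{sp}$ for all $x, y \in S_{sp}$, and the mixed lattice subspace property of $S$ puts these envelopes in $S$ as well, hence in $C$.

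I do not foresee any real obstacle. The only point that deserves attention is that quasi-regularity is used in exactly one place — in the converse direction, to guarantee $V_{sp}$-closure of the envelopes — and that invoking (M2a) together with the $\rupp{\cdot}$-shortcut is what reduces the forward implication to a single line.
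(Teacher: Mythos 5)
Your proposal is correct and follows essentially the same route as the paper: both directions rest on the translation rule (M2) together with closure of the cone under the envelopes, and the converse uses (M8a)/(M8b), i.e.\ quasi-regularity, exactly as you indicate. The only cosmetic difference is that the paper verifies closure of $C-C$ under the envelopes by the direct two-element computation $(u-v)\lenv(a-b)=(u+b)\lenv(a+v)-(v+b)$, whereas you route it through the $\rupp{\,\cdot\,}$-shortcut; both are instances of the same translation trick.
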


\begin{bproof}
Let $C$ be a mixed lattice cone. It is clear that $S=C-C$ is a subspace. 
If $x,y\in S$ then $x=u-v$ and $y=a-b$ for some $u,v,a,b\in C$. Then
$$
x\lenv y = (u-v)\lenv (a-b) = (u+b) \lenv (a+v) -(v+b),
$$
where $(u+b) \lenv (a+v)\in C$.  
Also $v+b\in C$, so $x\lenv y \in S$. Similarly $x\uenv y \in S$ and so $S$ is a mixed lattice subspace.

Conversely, if $S$ is a mixed lattice subspace then $S_{sp}$ is clearly a cone. If $x,y\in S_{sp}$ then $x\lenv y\in S$ and $x\lenv y\sgeq 0$, by (M8b), and so $x\lenv y\in S_{sp}$. 
Similarly, $x\uenv y\in S$, and $0\sleq x\sleq x\uenv y$, so $x\uenv y\in S_{sp}$. 
Hence $S_{sp}$ is a mixed lattice cone.
\end{bproof}

To gain more information about the structure of specific ideals, we introduce some additional terminology.  
A mixed lattice cone $C_1$ is called a \emph{mixed lattice sub-cone} of another mixed lattice cone $C_2$ if $C_1\subseteq C_2$. 
A sub-cone $F$ of a cone $C$ is called a \emph{face} of $C$ if 
$x + y\in F$ with $x,y\in C$ imply that $x,y\in F$.

\blue{First we observe that every face of $V_{sp}$ is a mixed lattice sub-cone.} 

\blue{\begin{prop}\label{face_is_mlcone}
Every face of $V_{sp}$ is a mixed lattice sub-cone of $V_{sp}$.
\end{prop}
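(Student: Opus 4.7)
The plan is to exploit the basic identity (M1) together with quasi-regularity to place both mixed envelopes inside $V_{sp}$, and then invoke the face property of $F$ to pull them into $F$.

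Let $F$ be a face of $V_{sp}$ and fix $x,y\in F$. Since $F$ is a sub-cone of $V_{sp}$, it is closed under addition, so $x+y\in F\subseteq V_{sp}$. I would next observe that because $V$ is assumed quasi-regular, the cone $V_{sp}$ is itself a mixed lattice cone, so the four elements $x\uenv y$, $y\uenv x$, $x\lenv y$, $y\lenv x$ all lie in $V_{sp}$.

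Now I would apply the identity (M1) in two ways. First, $x\uenv y + y\lenv x = x+y \in F$, with both summands in $V_{sp}$; since $F$ is a face of $V_{sp}$, the defining face property forces both $x\uenv y\in F$ and $y\lenv x\in F$. Swapping the roles of $x$ and $y$, the identity $y\uenv x + x\lenv y = y+x \in F$, again with both summands in $V_{sp}$, analogously yields $y\uenv x\in F$ and $x\lenv y\in F$. Thus $x\uenv y$ and $x\lenv y$ both belong to $F$, showing that $F$ is a mixed lattice sub-semigroup of $V$; combined with the fact that $F$ is already a sub-cone of $V_{sp}$, this makes $F$ a mixed lattice sub-cone of $V_{sp}$.

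There is no real obstacle here: the argument is essentially a two-line application of (M1) and the definition of a face, with quasi-regularity only used to ensure that the mixed envelopes stay inside $V_{sp}$ so that the face condition (which is stated relative to elements of $V_{sp}$) is actually applicable.
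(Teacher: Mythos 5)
Your proof is correct and follows essentially the same route as the paper: apply (M1) to write $x\uenv y + y\lenv x = x+y\in F$ and invoke the face property. The only difference is that you spell out the (implicitly used) fact that quasi-regularity places both summands in $V_{sp}$, which is indeed needed for the face condition to apply.
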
}

\blue{\begin{bproof}
Let $F$ be a face of $V_{sp}$ and $x,y\in F$. Then $x\uenv y + y\lenv x=x+y\in F$. It follows that $x\uenv y \in F$ and $y\lenv x \in F$, and so $F$ is a mixed lattice sub-cone of $V_{sp}$.
\end{bproof}}

\blue{The next two results are well-known but we give proofs for completeness.} 

\blue{\begin{lemma2}\label{o-convex_cone}
A sub-cone $C$ of $V_p$ is ($\leq$)-order convex if and only if $0\leq x\leq y$ with $y\in C$ implies that $x\in C$. Similarly, a sub-cone $C$ of $V_{sp}$ is ($\sleq$)-order convex if and only if $0\sleq x\sleq y$ with $y\in C$ implies that $x\in C$.
\end{lemma2}}

\blue{\begin{bproof}
Clearly the given condition is necessary. Assume the condition holds and let $z\leq x\leq y$ with $z,y\in C$. Then $0\leq x-z\leq y-z\leq y$ which implies that $x-z\in C$, and since $z\in C$ and $C$ is a sub-cone, it follows that $x=(x-z)+z\in C$. The proof of the second statement is essentially the same.
\end{bproof}}

\blue{\begin{prop}\label{facechar}
A sub-cone $F$ of $V_p$ is a face of $V_{p}$ if and only if $F$ is ($\leq$)-order convex. 
Similarly, $F$ is a face of $V_{sp}$ if and only if $F$ is a ($\sleq$)-order convex sub-cone of $V_{sp}$.
\end{prop}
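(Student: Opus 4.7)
The plan is to prove each equivalence by a short direct argument, using Lemma \ref{o-convex_cone} to reformulate order convexity in the form most convenient for comparing with the face condition. Since both statements (for $V_p$ with $\leq$, and for $V_{sp}$ with $\sleq$) have identical structure, I would prove the $V_p$ version in detail and then remark that the $V_{sp}$ case is obtained by replacing $\leq$ with $\sleq$ throughout.

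For the forward direction, suppose $F$ is a face of $V_p$. To show $F$ is $(\leq)$-order convex, by Lemma \ref{o-convex_cone} it suffices to show that $0 \leq x \leq y$ with $y \in F$ implies $x \in F$. Given such $x$ and $y$, write $y = x + (y - x)$, where both $x$ and $y - x$ belong to $V_p$. Since $y \in F$ and $F$ is a face of $V_p$, the face property applied to this decomposition yields $x \in F$ (and also $y - x \in F$).

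For the converse, suppose $F$ is a $(\leq)$-order convex sub-cone of $V_p$. To verify the face condition, take any $x, y \in V_p$ with $x + y \in F$. Then $0 \leq x \leq x + y$, so Lemma \ref{o-convex_cone} forces $x \in F$; by symmetry, $y \in F$ as well. This establishes that $F$ is a face of $V_p$. The $V_{sp}$ statement is proved by the same argument with $\sleq$ replacing $\leq$ and the second half of Lemma \ref{o-convex_cone} replacing the first.

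There is no real obstacle here: the proof is essentially a rewriting of the definitions once Lemma \ref{o-convex_cone} is available. The only subtlety worth noting is that the face condition demands the decomposition lie entirely within the ambient cone ($V_p$ or $V_{sp}$), and in both directions this is automatic from the hypotheses ($0 \leq x \leq x+y$ in $V_p$, or $0 \sleq x \sleq x+y$ in $V_{sp}$), so no appeal to quasi-regularity or to the mixed envelopes is needed.
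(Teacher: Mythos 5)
Your proof is correct and follows essentially the same route as the paper: both directions use the decomposition $y=(y-x)+x$ together with Lemma \ref{o-convex_cone}, and the converse is verbatim the paper's argument. Nothing further to add.
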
}

\blue{\begin{bproof}
Suppose that $F$ is a face of $V_p$ and let $0\leq x\leq y$ with $y\in F$. Then $x\in V_p$, $0\leq y-x\in V_p$ and $(y-x)+x=y\in F$. It follows that $x\in F$ and this shows that $F$ is ($\leq$)-order convex, by the preceding lemma. 
Conversely, if $F$ is ($\leq$)-order convex and $x+y\in F$ then $0\leq x\leq x+y$ and $0\leq y\leq x+y$ imply that $x,y\in F$. Hence $F$ is a face. The proof of the second assertion is identical, just replace $\leq$ by $\sleq$.
\end{bproof}}

\blue{The following result is now an immediate consequence of the preceding results and the definition of a specific ideal.}

\blue{\begin{cor}\label{spidealface}
A mixed lattice subspace $A$ is a specific ideal if and only if $A_{sp}$ is a face of  $V_{sp}$.
\end{cor}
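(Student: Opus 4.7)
The plan is to reduce the claim to the characterization of faces given in Proposition \ref{facechar} (which says $F$ is a face of $V_{sp}$ iff $F$ is a $(\sleq)$-order convex sub-cone of $V_{sp}$), and then to unwind the definition of a specific ideal. Throughout, $A$ is assumed to be a mixed lattice subspace, so only the $(\sleq)$-order convexity needs to be matched on each side.

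For the forward direction, suppose $A$ is a specific ideal. Then $A_{sp}=A\cap V_{sp}$ is clearly a sub-cone of $V_{sp}$: it is closed under addition and nonnegative scalar multiplication since both $A$ and $V_{sp}$ are, contains $0$, and satisfies $A_{sp}\cap(-A_{sp})=\{0\}$ because this already holds in $V_{sp}$. To see that $A_{sp}$ is $(\sleq)$-order convex in $V_{sp}$, take $0\sleq x\sleq y$ with $y\in A_{sp}$. Then $y\in A$, and the $(\sleq)$-order convexity of $A$ forces $x\in A$; together with $x\sgeq 0$ this gives $x\in A_{sp}$. By Proposition \ref{facechar}, $A_{sp}$ is a face of $V_{sp}$.

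For the converse, suppose $A_{sp}$ is a face of $V_{sp}$, so that (again by Proposition \ref{facechar}) $A_{sp}$ is a $(\sleq)$-order convex sub-cone of $V_{sp}$. To show $A$ is a specific ideal, we only need $(\sleq)$-order convexity. Let $0\sleq y\sleq x$ with $x\in A$. By transitivity of $\sleq$ we have $0\sleq x$, so $x\in A\cap V_{sp}=A_{sp}$. Now $0\sleq y\sleq x$ with $x\in A_{sp}$, and the $(\sleq)$-order convexity of $A_{sp}$ as a subset of $V_{sp}$ (via Lemma \ref{o-convex_cone}) gives $y\in A_{sp}\subseteq A$, as required.

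There is no real obstacle here: both directions are essentially bookkeeping once Proposition \ref{facechar} is in hand. The only point that deserves a moment's care is the transitivity step $0\sleq y\sleq x\Rightarrow 0\sleq x$ in the converse, which is what allows us to move from an order-convexity hypothesis inside $V_{sp}$ to one inside $V$.
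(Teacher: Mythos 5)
Your proof is correct and follows exactly the route the paper intends: the paper states this corollary as an immediate consequence of Proposition \ref{facechar} together with the definition of a specific ideal, which is precisely the reduction you carry out. The details you supply (including the transitivity step $0\sleq y\sleq x\Rightarrow x\sgeq 0$ needed to pass between order convexity in $V$ and in $V_{sp}$) are the right ones.
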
}

\blue{Our next result is important for the theory of ideals. It shows that every ideal contains a quasi-ideal.}

\begin{thm}\label{smallest_ideal_inside_ideal}
If $A$ is an ideal and $W=A_{sp}-A_{sp}$ then $W$ is a quasi-ideal, 
and there is no ideal $B$ such that $W\subseteq B\subseteq A$ and $B\neq A$. In particular, if $W$ is an ideal then $W=A$. 
\end{thm}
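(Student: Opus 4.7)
The plan is to dispatch the three assertions in turn: first that $W$ is a mixed lattice subspace, then that $W$ is mixed-order convex (these together giving the quasi-ideal property), and finally the minimality claim, from which the ``in particular'' sentence follows by taking $B=W$.

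For the mixed lattice subspace property of $W$, I would first verify that $A_{sp}$ is a mixed lattice sub-cone of $V_{sp}$. Given $x,y\in A_{sp}$, both $x\uenv y$ and $x\lenv y$ lie in $A$ because $A$ is a mixed lattice subspace; moreover $x\uenv y\sgeq x\sgeq 0$ by (M4), and (M8b) with $z=0$ yields $x\lenv y\sgeq 0$. Theorem~\ref{mlcone1} then shows that $W=A_{sp}-A_{sp}$ is a mixed lattice subspace. For mixed-order convexity, take $0\sleq x\leq y$ with $y\in W$ and write $y=u-v$ with $u,v\in A_{sp}$. Since $v\geq 0$ by (M7), we obtain $x\leq u-v\leq u$, and similarly $x\geq 0$. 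The chain $0\leq x\leq u\in A$ together with the $(\leq)$-order convexity of the ideal $A$ gives $x\in A$, and then $x\sgeq 0$ puts $x$ into $A_{sp}\subseteq W$. Hence $W$ is a quasi-ideal.

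For the minimality, let $B$ be an ideal with $W\subseteq B\subseteq A$. Since every mixed lattice subspace is positively generated (as noted after Definition~\ref{def_proper_sp_ideal}), we have $A=A_p-A_p$, and it suffices to show $A_p\subseteq B$. Given $y\in A_p$, set $z=0\uenv y$. Then $z\in A$ because $A$ is a mixed lattice subspace, and by (M4) we have $z\sgeq 0$ and $z\geq y$. Hence $z\in A_{sp}\subseteq W\subseteq B$, and the inequality $0\leq y\leq z$ combined with the $(\leq)$-order convexity of $B$ forces $y\in B$. Thus $A\subseteq B$, so $B=A$, and the ``in particular'' clause is the special case $B=W$.

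I do not anticipate a significant obstacle. The main subtlety to keep in mind is the asymmetry between $\rupp{y}=0\uenv y$, which is automatically $\sgeq 0$ by (M4), and $\llow{y}=(-y)\uenv 0$, which is only $\geq 0$ in general; this is precisely why one upper-bounds a general $y\in A_p$ by $\rupp{y}\in A_{sp}$ rather than attempting to decompose $y$ via $\llow{y}$, which need not lie in $W$.
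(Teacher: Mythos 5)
Your proof is correct and follows essentially the same route as the paper: Theorem~\ref{mlcone1} gives that $W$ is a mixed lattice subspace, mixed-order convexity comes from the $(\leq)$-order convexity of $A$ together with $x\sgeq 0$, and minimality uses $0\leq x\leq \rupp{x}\in A_{sp}\subseteq B$ for $x\in A_p$. The extra details you supply (verifying $A_{sp}$ is a mixed lattice cone, bounding $x$ by $u$ rather than by $y$ directly) are harmless elaborations of steps the paper leaves implicit.
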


\begin{bproof}
By Theorem \ref{mlcone1}, $W$ is a mixed lattice subspace. Moreover, if $0\sleq x \leq y$ with $y\in W$ then $y\in A$, and $A$ is an ideal, so $x\in A_{sp}\subseteq W$. This shows that $W$ is mixed-order convex.  
Let $B$ be any ideal contained in $A$ such that $W\subseteq B$. Then for every $x\in A_p$ we have $0\leq x\leq \rupp{x}\in B$. This implies that $x\in B$, so $B=A$. In particular, if $W$ is an ideal then $W=A$.
\end{bproof}

We recall that if $E$ is a subset of $V$ then the smallest ideal (with respect to set inclusion) that contains $E$ is called the \emph{ideal generated by} $E$. 
The next result gives a description of ideals generated by a mixed lattice cone.

\begin{thm}\label{ideal_description}
The ideal $A$ generated by a mixed lattice cone $C$ equals the subspace generated by the cone $S=\{x\in V: 0\leq x\leq u \, \textrm{ for some } \,u\in C \}$. 
\end{thm}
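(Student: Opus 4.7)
The plan is to denote $T = S - S$, observe that $T$ is a subspace of $V$ (since $S$ is closed under addition and nonnegative scalar multiplication), and establish $T = A$ by proving each side contains the other.

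For $T \subseteq A$: since $A$ is an ideal containing $C$, it is $(\leq)$-order convex, so every $x \in S$, satisfying $0 \leq x \leq u$ with $u \in C \subseteq A$, lies in $A$. Hence $S \subseteq A$, and consequently $T = S - S \subseteq A$.

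For the reverse inclusion, by minimality of $A$ it suffices to verify that $T$ is itself an ideal containing $C$. The inclusion $C \subseteq S \subseteq T$ is immediate by taking $x = u$ in the definition of $S$. To check $(\leq)$-order convexity of $T$: if $0 \leq y \leq x$ with $x = x_1 - x_2 \in T$ and $x_i \in S$, then $x_2 \geq 0$ gives $y \leq x \leq x_1$, while $x_1 \in S$ yields $x_1 \leq u_1$ for some $u_1 \in C$, so $0 \leq y \leq u_1$ places $y \in S \subseteq T$.

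The main step, and the real obstacle, is showing that $T$ is a mixed lattice subspace. By the remark preceding Proposition \ref{regular_oconvex}, it suffices to show $\rupp{x} \in T$ for every $x \in T$. Given $x = x_1 - x_2 \in T$ with $x_i \in S$ and $0 \leq x_i \leq u_i$, $u_i \in C$, we invoke (M5) with $0 \sleq 0$ and $x \leq u_1$ to obtain
\[
\rupp{x} = 0 \uenv x \leq 0 \uenv u_1 = \rupp{u_1}.
\]
Closure of the mixed lattice cone $C$ under $\uenv$, together with $0 \in C$ and $u_1 \in C$, yields $\rupp{u_1} \in C$. Hence $0 \leq \rupp{x} \leq \rupp{u_1} \in C$, which places $\rupp{x} \in S \subseteq T$. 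This completes the verification that $T$ is an ideal containing $C$, so $A \subseteq T$ and equality follows.
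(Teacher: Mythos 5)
Your proof is correct and follows the same strategy as the paper: show that $S-S$ is an ideal containing $C$ and that it sits inside every ideal containing $C$. The only (harmless) difference is in the key step, where the paper dominates $\rupp{x}$ by $\rupp{(x_1)}+\rupp{(x_2)}\leq u_1+u_2\in C$ via subadditivity of the specific upper part, whereas you dominate it by $0\uenv u_1=\rupp{(u_1)}\in C$ using (M5) and the closure of the mixed lattice cone $C$ under $\uenv$; both yield an element of $C$ above $\rupp{x}$, which is all that is needed.
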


\begin{bproof}
Since $C$ is a cone, it follows immediately that $S$ is also a cone.   
Evidently, every ideal that contains $C$ must contain $S$, so it contains also the subspace $S-S$.  
Therefore, it is sufficient to show that the subspace $S-S$ is an ideal. $S-S$ is obviously ($\leq$)-order convex, so it remains to show that it is a mixed lattice subspace. Let $x\in S-S$. Then $x=x_1-x_2$ with $x_1,x_2\in S$, and so $x\leq x_1\leq u_1$ for some $u_1\in C$. 
\blue{Since $C\subseteq V_{sp}$, it follows by Theorem \ref{absval}(i) that $\rupp{(u_1)}=u_1$,  
and so by (M5) we have 
$$
0\leq \rupp{x} =0\uenv x \leq 0\uenv x_1 \leq 0\uenv u_1 = \rupp{(u_1)}=u_1 \in C.
$$
} 
Thus, $\rupp{x}\in S$, and so $S-S$ is a mixed lattice subspace. Hence we have proved that $S-S$ is a smallest ideal containing $C$.  
\end{bproof}

If $A$ is an ideal then by Theorem \ref{mlcone1} the set $A_{sp}$ is a mixed lattice cone, and by Theorem \ref{smallest_ideal_inside_ideal} $A$ is the smallest ideal that contains $A_{sp}$. Now the two preceding theorems yield the following corollary which tells that ideals are uniquely determined by their sets of $(\sleq)$-positive elements.

\begin{cor}\label{ideals_corollary}
Every ideal $A$ equals the ideal generated by $A_{sp}$, that is, the subspace generated by the set $S=\{x\in V: 0\leq x\leq u \, \textrm{ for some } \,u\in A_{sp} \}$. \blue{In fact, $S=A_p$.} Consequently, if $A$ and $B$ are two ideals such that $A_{sp}=B_{sp}$ then $A=B$.
\end{cor}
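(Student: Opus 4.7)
The strategy is to apply Theorem~\ref{ideal_description} with $C=A_{sp}$ and then match the result against $A$ itself, using that a mixed lattice subspace is positively generated. The preliminary observation is that $A_{sp}$ is a mixed lattice cone by Theorem~\ref{mlcone1}, so Theorem~\ref{ideal_description} applies and tells us that the ideal generated by $A_{sp}$ coincides with the subspace $S-S$.

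The crux is the identification $S=A_p$. For $S\subseteq A_p$ I would argue directly: given $x\in S$ there exists $u\in A_{sp}\subseteq A$ with $0\le x\le u$, and since $A$ is $(\le)$-order convex this forces $x\in A$; together with $x\ge 0$ this gives $x\in A_p$. For the reverse inclusion $A_p\subseteq S$, given $x\in A_p$ I would take $u=\rupp{x}=0\uenv x$ as the witness. Three things need to be checked: (i) $\rupp{x}\in A$, which follows because $A$ is a mixed lattice subspace containing $0$ and $x$; (ii) $\rupp{x}\sgeq 0$, which is immediate from the definition (or from (M4)); and (iii) $x\le \rupp{x}$, which follows from the decomposition $\rupp{x}-x=\llow{x}\ge 0$ recorded in Theorem~\ref{absval}(b),(k). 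Thus $u=\rupp{x}\in A_{sp}$ dominates $x$, i.e.\ $x\in S$.

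Once $S=A_p$ is in hand, the first assertion follows quickly: the ideal generated by $A_{sp}$ equals $S-S=A_p-A_p$, and since every mixed lattice subspace is positively generated (as noted in the text right after Definition~\ref{def_proper_sp_ideal}), we have $A=A_p-A_p$. Hence the ideal generated by $A_{sp}$ is precisely $A$. The uniqueness statement is then automatic: if $A_{sp}=B_{sp}$ then the ideals generated by this common set are equal, so $A=B$.

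The only nontrivial step is the inclusion $A_p\subseteq S$, and even there the main point is simply to produce a $(\sleq)$-positive dominator of $x$ inside $A$. The element $\rupp{x}$ does the job essentially for free using the mixed-lattice-subspace closure of $A$ together with the basic identities of Theorem~\ref{absval}; no further machinery is required.
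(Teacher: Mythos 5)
Your proof is correct and follows essentially the same route as the paper: both rest on Theorem~\ref{ideal_description} applied to the mixed lattice cone $A_{sp}$ and on the identification $S=A_p$ via the same witness $0\leq x\leq \rupp{x}\in A_{sp}$ for $x\in A_p$. The only cosmetic difference is that you close the argument with $A=A_p-A_p=S-S$ (positive generation of mixed lattice subspaces), whereas the paper identifies $A$ with the ideal generated by $A_{sp}$ by invoking Theorem~\ref{smallest_ideal_inside_ideal}; both are valid.
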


\blue{\begin{bproof}
The only thing that still needs proof is the claim that $S=A_p$. The inclusion $S\subseteq A_p$ is clear, and the reverse inclusion holds too, for if $x\in A_p$ then $0\leq x\leq \rupp{x}\in A_{sp}$.
\end{bproof}}

The preceding results have yet another consequence. By Theorem \ref{smallest_ideal_inside_ideal}, if $A$ is an ideal then the subspace $W=A_{sp}-A_{sp}$ is a regular quasi-ideal. The following gives the converse.

\begin{thm}\label{quasi-ideal_charact}
If $W$ is a regular quasi-ideal then there exists an ideal $A$ such that $W=A_{sp}-A_{sp}$. Hence, $W$ is a regular quasi-ideal if and only if $W=A_{sp}-A_{sp}$ for some ideal $A$.
\end{thm}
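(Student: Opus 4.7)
The plan is to take $A$ to be the ideal generated by the cone $W_{sp}$ and then show that $A_{sp}$ collapses back down to $W_{sp}$, after which regularity of $W$ finishes the job. Since $W$ is a quasi-ideal (hence a mixed lattice subspace), Theorem \ref{mlcone1} tells us that $W_{sp}$ is a mixed lattice cone, so it makes sense to speak of the ideal it generates.

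By Theorem \ref{ideal_description} together with Corollary \ref{ideals_corollary}, the ideal $A$ generated by $W_{sp}$ has positive cone
\eq{A_p \,=\, \{\,x\in V : 0\leq x\leq u \text{ for some } u\in W_{sp}\,\}.}
I would next prove the key claim $A_{sp}=W_{sp}$. The inclusion $W_{sp}\subseteq A_{sp}$ is immediate, since $W_{sp}\subseteq A$ and the elements of $W_{sp}$ are by definition $(\sleq)$-positive. For the reverse inclusion, take $x\in A_{sp}$. Then $x\sgeq 0$, and (M7) gives $x\geq 0$, so $x\in A_p$. The description of $A_p$ above yields some $u\in W_{sp}\subseteq W$ with $0\leq x\leq u$. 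Because $x\sgeq 0$, we actually have $0\sleq x\leq u$ with $u\in W$, and the mixed-order convexity of the quasi-ideal $W$ then forces $x\in W$. Combined with $x\sgeq 0$, this gives $x\in W_{sp}$, as required.

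Finally, because $W$ is regular we have $W=W_{sp}-W_{sp}$, and by the claim just proved this equals $A_{sp}-A_{sp}$, completing the proof. Combined with Theorem \ref{smallest_ideal_inside_ideal}, this yields the stated characterization of regular quasi-ideals.

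I do not anticipate a serious obstacle: the only subtle point is remembering that mixed-order convexity is precisely the hypothesis that allows us to pass from $0\leq x\leq u$ (with $u\sgeq 0$ and $x\sgeq 0$) to $x\in W$, which is exactly the asymmetric sandwich condition in Definition \ref{def_proper_sp_ideal}(iii) as rephrased in Lemma \ref{mixed-o-convex}. Everything else is a direct bookkeeping application of the previously established structural results.
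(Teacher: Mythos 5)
Your proof is correct and follows essentially the same route as the paper: take $A$ to be the ideal generated by the mixed lattice cone $W_{sp}$, show $A_{sp}=W_{sp}$ by dominating any $x\in A_{sp}$ with some $u\in W_{sp}$ and invoking the mixed-order convexity of $W$, then conclude from regularity. (The only cosmetic point is that the identity $A_p=S$ is not literally what Corollary \ref{ideals_corollary} states, since there $S$ is built from $A_{sp}$ rather than from the generating cone $W_{sp}$; but it follows in one line from $A=S-S$, and the paper avoids the issue by writing $x=y-z$ with $y,z\in S$ and bounding $0\sleq x\leq y\leq u$ directly.)
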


\begin{bproof}
Let $W=W_{sp}-W_{sp}$ be a regular quasi-ideal and let $A$ be the ideal generated by $W_{sp}$. By Theorem \ref{mlcone1} the set $W_{sp}$ is a mixed lattice cone, so by Theorem \ref{ideal_description} $A=S-S$, where  $S=\{x\in V: 0\leq x\leq u \, \textrm{ for some } \,u\in W_{sp} \}$. Now we only need to show that $A_{sp}=W_{sp}$. It is clear that $W_{sp}\subseteq A_{sp}$, so let $x\in A_{sp}$. Then $x=y-z$ where $y,z\in S$, so we have $0\sleq x \leq y\leq u$ for some $u\in W_{sp}$. But this implies that $x\in W_{sp}$ since $W$ is a quasi-ideal, so we conclude that $A_{sp}=W_{sp}$.
\end{bproof}

Next we will study the properties of algebraic sums of different types of subspaces. For instance, if $A$ and $B$ are quasi-ideals, then what can be said about the sum $A+B$ ? Another important question is if the sets of positive elements are preserved in these sums, that is, does $(A+B)_{sp}=A_{sp}+B_{sp}$ or $(A+B)_{p}=A_{p}+B_{p}$ hold.

\begin{thm}\label{sum_of_ideals2}
If $A$ is a quasi-ideal and $B$ is a positively generated ($\leq$)-order convex subspace then $A+B$ is a mixed-order convex subspace. 
Moreover, if $B$ is an ideal then $A+B$ is a quasi-ideal, and if, in addition, $A$ is regular then $(A+B)_p=A_p+B_p$.
\end{thm}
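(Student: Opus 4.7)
The plan is to tackle the three claims separately, using the two versions of the dominated decomposition in Theorem~\ref{riesz_dec} as the main engines.

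For the mixed-order convexity of $A+B$, I would take $y=a+b\in A+B$ with $0\sleq x\leq y$. Since $A$ is a mixed lattice subspace, the decomposition $a=\rupp{a}-\llow{a}$ has $\rupp{a}\in A_{sp}$ and $\llow{a}\in A_p$. The positive generation of $B$ lets me write $b=b_1-b_2$ with $b_1,b_2\in B_p$. Dropping the nonnegative excess $\llow{a}+b_2$ yields $x\leq \rupp{a}+b_1$, with $x\sgeq 0$, $\rupp{a}\sgeq 0$, and $b_1\geq 0$, so Theorem~\ref{riesz_dec}(a) produces a splitting $x=u_1+u_2$ with $0\leq u_1\leq b_1$ and $0\sleq u_2\leq \rupp{a}$. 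Then $u_1\in B$ by the $(\leq)$-order convexity of $B$, and $u_2\in A$ by the mixed-order convexity of $A$, so $x\in A+B$.

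Assuming additionally that $B$ is an ideal, to conclude that $A+B$ is a quasi-ideal it suffices (by the remark preceding Proposition~\ref{regular_oconvex}) to verify that $\rupp{x}\in A+B$ for every $x=a+b\in A+B$. Since both $A$ and $B$ are mixed lattice subspaces, $\rupp{a}\in A_{sp}$ and $\rupp{b}\in B_{sp}$, and Theorem~\ref{absval}(f) gives $\rupp{x}\leq \rupp{a}+\rupp{b}\in A+B$. Combined with $\rupp{x}\sgeq 0$ and the mixed-order convexity from the first part, this yields $\rupp{x}\in A+B$.

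For the last claim, the inclusion $A_p+B_p\subseteq (A+B)_p$ is immediate. For the reverse, let $z\in (A+B)_p$ and write $z=a+b$. I would use the regularity of $A$ to decompose $a=a_1-a_2$ with $a_1,a_2\in A_{sp}$, and the fact that $B$ is a mixed lattice subspace to decompose $b=\lupp{b}-\rlow{b}$ with $\lupp{b}\in B_p$ and $\rlow{b}\in B_{sp}$. The critical observation is that $a_1+\lupp{b}-z=a_2+\rlow{b}$ lies in $V_{sp}$ (as a sum of specifically positive elements), so the relation $z\leq a_1+\lupp{b}$ is upgraded to $z\sleq a_1+\lupp{b}$. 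Now Theorem~\ref{riesz_dec}(b) applies with $u=z\geq 0$, $v_1=a_1\sgeq 0$, $v_2=\lupp{b}\geq 0$ and produces $z=u_1+u_2$ with $0\leq u_1\sleq a_1$ and $0\leq u_2\leq \lupp{b}$; then $u_1\in A_p$ via Lemma~\ref{mixed-o-convex} and $u_2\in B_p$ by the $(\leq)$-order convexity of $B$.

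The subtlest point is the last claim: because $z$ is only assumed $(\leq)$-positive, Theorem~\ref{riesz_dec}(a) is not directly available. The trick that makes Theorem~\ref{riesz_dec}(b) usable instead is precisely the combined choice of a \emph{regular} decomposition of $a$ together with the ``mirror'' decomposition $b=\lupp{b}-\rlow{b}$, arranged so that the residual $a_2+\rlow{b}$ is in $V_{sp}$, turning the $(\leq)$-inequality into a $(\sleq)$-inequality.
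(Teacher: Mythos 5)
Your proposal is correct and follows essentially the same route as the paper: Theorem~\ref{riesz_dec}(a) for mixed-order convexity, subadditivity of $\rupp{(\cdot)}$ for the quasi-ideal claim, and Theorem~\ref{riesz_dec}(b) with a specific upper bound coming from the regularity of $A$ for $(A+B)_p=A_p+B_p$. The only cosmetic differences are that you invoke the already-established mixed-order convexity of $A+B$ instead of re-running the decomposition argument for $\rupp{x}$, and you verify $z\sleq a_1+\lupp{b}$ by exhibiting the residual $a_2+\rlow{b}\in V_{sp}$ rather than adding the two specific inequalities directly.
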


\begin{bproof}
It is clear that $A+B$ is a subspace. 
Assume first that $0\sleq x\leq y$ with $y=y_1+y_2\in A+B$. By assumption, $y_2=v_1-v_2$ with $0\leq v_1,v_2\in B$, and $A$ is a mixed lattice subspace, so $\rupp{(y_1)}\in A$. Then $0\sleq x\leq y_1+y_2 \leq \rupp{(y_1)} + v_1$, so we can now apply the dominated decomposition property (Theorem \ref{riesz_dec}(a)) and write $x=x_1+x_2$, where $0\sleq x_1 \leq \rupp{(y_1)}$ and $0\leq x_2 \leq v_1$. It follows that $x_1\in A$ and $x_2\in B$, so $x\in A+B$. 
This shows that $A+B$ is mixed-order convex. 

Assume next that $B$ is an ideal and let $y=y_1+y_2\in A+B$. Then $0\sleq \rupp{y} \leq  \rupp{(y_1)} + \rupp{(y_2)}$ and, applying Theorem \ref{riesz_dec}(a) again we can find elements $u_1$ and $u_2$ such that $\rupp{y}=u_1+u_2$, \, $0\sleq u_1 \leq \rupp{(y_1)}$ \, and \, $0\sleq u_2 \leq \rupp{(y_2)}$. This implies that $u_1\in A$ and $u_2\in B$, and hence 
 $\rupp{y}\in A+B$. This shows that $A+B$ is a mixed lattice subspace, and hence a quasi-ideal.
It is clear that $A_p+B_p\subseteq (A+B)_p$. Conversely, if $0\leq x\in A+B$ then $x=x_1+x_2$ with $x_1\in A$ and $x_2\in B$. If $A$ is regular, we can choose an element $0\sleq v\in A$ such that $x_1\sleq v$. Also, $x_2\sleq \lupp{x_2}\in B$, and so we have $0\leq x\sleq v+\lupp{x_2}$. We can now apply Theorem \ref{riesz_dec}(b) to find positive elements $u_1\in A$ and $u_2\in B$ such that $x=u_1+u_2$, completing the proof.
\end{bproof}

Along the same lines we have the following result.

\begin{thm}\label{specific_direct_sum1}
In a mixed lattice space the following hold. 
\begin{enumerate}[(a)]
\item
If $A$ and $B$ are quasi-ideals then $(A + B)_{sp}=A_{sp}+B_{sp}$. Moreover, if $A$ and $B$ are regular then $A + B$ is also a regular quasi-ideal. 
\blue{\item 
If $A$ is a regular specific ideal and $B$ is a regular quasi-ideal then $A + B$ is a regular specific ideal and $(A + B)_{sp}=A_{sp}+B_{sp}$.  
The last equality holds, in particular, if $B$ is an ideal.}
\end{enumerate} 
\end{thm}

\begin{bproof}
\emph{(a)} \, If $x\in (A + B)_{sp}$ then $0\sleq x = x_1+x_2\leq \rupp{(x_1)}+\rupp{(x_2)}$ with $\rupp{(x_1)}\in A_{sp}$ and $\rupp{(x_2)}\in B_{sp}$. By Theorem \ref{riesz_dec}(a)  $x=a+b$ for some elements $a$ and $b$ such that $0\sleq a\leq \rupp{(x_1)}$ and $0\sleq b\leq \rupp{(x_2)}$. It follows that $a\in A_{sp}$ and $b\in B_{sp}$, hence $(A+ B)_{sp}\subseteq A_{sp}+B_{sp}$. The reverse inclusion is obvious. Assume then that $0\sleq y \leq x$ with $x\in A+ B$. If $A$ and $B$ are regular it follows that $A + B$ is also regular, and we may thus assume that $x\sgeq 0$. Hence, $0\sleq y \leq x=a+b$ where $a\in A_{sp}$ and $b\in B_{sp}$. Now the same argument as above (using Theorem \ref{riesz_dec}(a)) shows that $y\in A + B$, and so $A + B$ is mixed-order convex. Then 
$A + B$ is a mixed lattice subspace, by Proposition \ref{regular_oconvex}.

\blue{ 
\emph{(b)} \, If $x\in (A + B)_{sp}$ then $x = x_1+x_2$ with $x_1\in A$ and $x_2\in B$. Since $A$ and $B$ are regular, there exist elements $u\in A_{sp}$ and $v\in B_{sp}$ such that $x_1\sleq u$ and $x_2\sleq v$. Hence $0\sleq x = x_1+x_2\sleq u+v$, and 
by Theorem \ref{riesz_dec}(a) we have $x=a+b$ for some elements $a$ and $b$ such that $0\sleq a\sleq u$ and $0\sleq b\leq v$. It follows that $a\in A_{sp}$ and $b\in B_{sp}$, hence $(A+ B)_{sp}\subseteq A_{sp}+B_{sp}$. The reverse inclusion is clear. In particular, if $B$ is an ideal then $W=B_{sp}-B_{sp}$ is a regular quasi-ideal, and so $V_{sp}= A_{sp}+W_{sp}=A_{sp}+B_{sp}$. The proof that $A + B$ is a regular specific ideal is similar to part (a).} 
\end{bproof}

Regarding the above theorem we note that by Theorem \ref{quasi-ideal_charact} regular quasi-ideals are precisely those subspaces $W$ that   $W=A_{sp}-A_{sp}$ for some ideal $A$. As with ideals in Riesz spaces, these regular quasi-ideals form a distributive lattice.

\begin{thm}\label{distrib_lattice}
Let $V$ be a mixed lattice space and denote by $\mathcal{R}(V)$ the set of all regular quasi-ideals of $V$, ordered by inclusion. Then  $\mathcal{R}(V)$ is a distributive lattice where $A\vee B=A+B$ and $A\wedge B=A\cap B$. Moreover, $\mathcal{R}(V)$ has the smallest element $\{0\}$ and the largest element $V_{sp}-V_{sp}$.
\end{thm}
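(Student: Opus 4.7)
The plan is to verify in sequence that $\mathcal{R}(V)$ is closed under sums and intersections, to prove the distributive identity, and to identify the extremal elements. Closure under sums, together with the fact that $A+B$ is the least upper bound of $A$ and $B$ in the inclusion order, is immediate from Theorem \ref{specific_direct_sum1}(a).

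The main work lies in showing that $A \cap B \in \mathcal{R}(V)$ whenever $A, B \in \mathcal{R}(V)$. That $A \cap B$ is a mixed-order convex mixed lattice subspace is routine: it inherits stability under $\uenv, \lenv$ and mixed-order convexity from $A$ and $B$. For regularity, given $x \in A \cap B$, I first use regularity of $A$ and of $B$ to select $p_1 \in A_{sp}$ with $x \sleq p_1$ and $p_2 \in B_{sp}$ with $x \sleq p_2$ (possible because writing $x = u - v$ with $u,v$ in the corresponding specific cone yields $x \sleq u$). Setting $w = p_1 \lenv p_2$, two applications of (M8b) give $0 \sleq w$ and $x \sleq w$. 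From (M4), $w \sleq p_1 \in A$, so $w \in A$ since $A$, being a quasi-ideal, is in particular a specific ideal; and $w \leq p_2 \in B$ together with $0 \sleq w$ places $w \in B$ by mixed-order convexity of $B$. Hence $w \in (A \cap B)_{sp}$, and then $w - x \sgeq 0$ lies in $A \cap B$, so $x = w - (w - x)$ exhibits $x$ as a difference in $(A \cap B)_{sp}$.

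For the distributive law, the inclusion $(A \cap C) + (B \cap C) \subseteq (A+B) \cap C$ is obvious. For the reverse, let $x \in (A+B) \cap C$. By what has just been proved, $(A+B) \cap C \in \mathcal{R}(V)$, so I can write $x = p - q$ with $p, q \in ((A+B) \cap C)_{sp} = (A+B)_{sp} \cap C_{sp}$. Theorem \ref{specific_direct_sum1}(a) gives $(A+B)_{sp} = A_{sp} + B_{sp}$, so $p = a + b$ with $a \in A_{sp}$, $b \in B_{sp}$. By Corollary \ref{spidealface}, $C_{sp}$ is a face of $V_{sp}$; since $p \in C_{sp}$ and $a, b \sgeq 0$, this forces $a, b \in C_{sp}$, so $a \in (A \cap C)_{sp}$ and $b \in (B \cap C)_{sp}$. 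Thus $p \in (A \cap C) + (B \cap C)$, and the same argument for $q$ completes the proof.

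Finally, $\{0\}$ is plainly the smallest element. The largest is $V_{sp} - V_{sp}$: since $V$ is an ideal of itself, Theorem \ref{quasi-ideal_charact} shows that $V_{sp} - V_{sp}$ is a regular quasi-ideal, and any $W \in \mathcal{R}(V)$ satisfies $W = W_{sp} - W_{sp} \subseteq V_{sp} - V_{sp}$. The main obstacle in the whole argument is the regularity of $A \cap B$; the essential idea there is to merge specific upper bounds drawn from $A_{sp}$ and $B_{sp}$ via $\lenv$ and then exploit the asymmetric rules (M4) and (M8b) to drop the meet into both $A$ and $B$.
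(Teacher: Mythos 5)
Your proof is correct and follows essentially the same route as the paper: regularity of $A\cap B$ via the meet $u\lenv v$ of specific upper bounds together with (M8b), (M4) and the two convexity properties, and distributivity by splitting a specifically positive element of the sum via Theorem \ref{specific_direct_sum1}(a) and localizing the pieces using $(\sleq)$-order convexity (your face-of-$V_{sp}$ phrasing via Corollary \ref{spidealface} is equivalent by Proposition \ref{facechar}). The only cosmetic difference is that you decompose arbitrary elements as $p-q$ in the distributivity step, whereas the paper reduces at the outset to comparing the $(\sleq)$-positive cones of the two regular quasi-ideals.
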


\begin{bproof}
If $A,B\in \mathcal{R}(V)$ then by Theorem \ref{specific_direct_sum1} we have $A+B\in \mathcal{R}(V)$, and $A+B$ is clearly the smallest regular quasi-ideal that contains both $A$ and $B$, hence $A+B=A\vee B$. It is also clear that $A\cap B$ is the largest quasi-ideal that is contained in both $A$ and $B$. To see that $A\cap B$ is regular, let $x\in A\cap B$. Since $A$ and $B$ are regular, there exist elements $u\in A_{sp}$ and $v\in B_{sp}$ such that $x\sleq u$ and $x\sleq v$ hold. Then $x\sleq u\lenv v$ and the inequalities $0\sleq u\lenv v \sleq u$ and $0\sleq u\lenv v \leq v$ imply that $u\lenv v \in A\cap B$, proving that $A\cap B$ is regular, and hence $A\cap B=A\wedge B$. 
To prove the distributivity, it is sufficient to show that $[(A\cap B)+(A\cap C)]_{sp}= [A\cap(B+C)]_{sp}$, since $(A\cap B)+(A\cap C)$ and $A\cap(B+C)$ are both regular quasi-ideals. 
The inclusion $(A\cap B)+(A\cap C)\subseteq A\cap(B+C)$ is rather trivial, so let $x\in [A\cap(B+C)]_{sp}$. 
Then $x\in (B+C)_{sp}$, so  $x=x_1+x_2$ where $x_1\in B_{sp}$ and $x_2\in C_{sp}$, by Theorem \ref{specific_direct_sum1}. Moreover, $0\sleq x_1 \sleq x \in A$ implies that $x_1\in A_{sp}$, and similarly, $x_2\in A_{sp}$. Hence, $x_1\in (A\cap B)_{sp}$ and $x_2\in (A\cap C)_{sp}$, and therefore $x\in [(A\cap B)+(A\cap C)]_{sp}$. This shows that $\mathcal{R}(V)$ is distributive. Finally, it is clear that $\{0\}$ is the smallest element in $\mathcal{R}(V)$, and by Theorem \ref{smallest_ideal_inside_ideal} the largest element is  $W=V_{sp}-V_{sp}$.
\end{bproof}

\blue{We do not know if the preceding theorem can be stated for regular specific ideals in general. However, in many cases, a mixed lattice space is also a lattice with respect to one (or both) partial orderings. In this case, we obtain stronger results as all the lattice-theoretic tools are available to us. We will now briefly consider this situation. To avoid any confusion, we need to fix some terminology. 
If $(V,\sleq)$ is a vector lattice then we will say that $V$ is a $(\sleq)$-\emph{lattice}. 
If $(V,\sleq)$ is a vector lattice and $A$ is a lattice ideal in $(V,\sleq)$, then $A$ is called a $(\sleq)$-\emph{lattice ideal}.  
We also denote the absolute value and the positive and negative parts of an element $x$ with respect to $\sleq$ by $\spec |x|$,  $\spec (x^+)$ and $\spec (x^-)$, respectively.}

\blue{\begin{thm}\label{lattice_ideals}
If $V$ is a mixed lattice space such that $(V,\sleq)$ is a vector lattice then $A$ is a regular specific  
ideal in $V$ if and only if $A$ is a $(\sleq)$-lattice ideal of $(V,\sleq)$. 
\end{thm}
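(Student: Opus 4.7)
\emph{Proof plan.} Since the statement is an equivalence, I would treat the two implications separately. Both arguments rely on the fact that, under the hypothesis that $(V,\sleq)$ is a vector lattice, every element $x \in V$ has a well-defined $\sleq$-positive part $\spec(x^+) = x \spec\vee 0$ and $\sleq$-absolute value $\spec|x|$, which serve as the bridge between the $\sleq$-lattice operations and the mixed envelopes.

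For the forward implication, suppose $A$ is a regular specific mixed lattice ideal and let $x \in A$. By regularity I can write $x = u - v$ with $u, v \in A_{sp}$. The key observation is that $u$ is a $\sleq$-upper bound of both $u-v$ and $0$ (since $v \sgeq 0$ and $u \sgeq 0$), so the $\sleq$-supremum satisfies $\spec(x^+) = (u-v) \spec\vee 0 \sleq u$. Combined with $0 \sleq \spec(x^+)$ and the $\sleq$-order convexity of $A$, this forces $\spec(x^+) \in A$; symmetrically $\spec(x^-) \in A$, hence $\spec|x| \in A$. To deduce $\sleq$-solidity: if $\spec|y| \sleq \spec|x|$ with $x \in A$, order convexity places $\spec|y|$ and then $\spec(y^{\pm})$ in $A$, so that $y = \spec(y^+) - \spec(y^-) \in A$.

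For the backward implication, let $A$ be a $\sleq$-lattice ideal. Solidity immediately yields $\sleq$-order convexity, and it yields regularity by writing $x = \spec(x^+) - \spec(x^-)$ with both parts in $A_{sp}$ (using solidity applied to $\spec|\spec(x^{\pm})| = \spec(x^{\pm}) \sleq \spec|x|$). The main obstacle is to show that $A$ is closed under the mixed envelopes, since $\uenv$ and $\spec\vee$ need not coincide in general. To overcome this, I would invoke the reduction from the remark preceding Proposition \ref{regular_oconvex}: it suffices to verify that $\rupp{x} = 0 \uenv x \in A$ for every $x \in A$. Here the element $\spec(x^+) \in A$ is a specific upper bound of both $0$ and $x$, so (M8a) yields $0 \uenv x \sleq \spec(x^+)$, while (M4) supplies $0 \sleq 0 \uenv x$. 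Thus $0 \uenv x$ is sandwiched in the specific order between $0$ and $\spec(x^+) \in A$, and $\sleq$-order convexity delivers $0 \uenv x \in A$, completing the argument.
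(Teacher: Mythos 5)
Your proof is correct, but it is routed differently from the paper's in both directions. For the forward implication the paper invokes the identity $\spec|x| = \spsup\{\ul{x},\lu{x}\}$ (cited from an earlier paper) together with the fact that $\ul{x},\lu{x}\in A$ and regularity to produce a single $u\in A_{sp}$ dominating $\spec|x|$; you instead work straight from the regular decomposition $x=u-v$ with $u,v\in A_{sp}$ and observe that $\spec(x^{+})\sleq u$ and $\spec(x^{-})\sleq v$, which avoids the generalized absolute values and the external formula entirely, and you also spell out the solidity step that the paper leaves implicit. For the backward implication the paper simply cites \cite[Proposition 4.6]{jj1}, whereas you give a short self-contained argument: solidity gives $(\sleq)$-order convexity and regularity, and then $0\sleq 0\uenv x\sleq \spec(x^{+})\in A$ (via (M4) and (M8a)) together with $(\sleq)$-order convexity yields $\rupp{x}\in A$, which suffices by the reduction remark before Proposition \ref{regular_oconvex}. (Equivalently, once regularity and $(\sleq)$-order convexity are in hand you could have closed this direction by quoting Proposition \ref{regular_oconvex} directly.) What your version buys is self-containedness and a slightly more elementary forward argument; what the paper's version buys is brevity by leaning on results already established in \cite{jj1}.
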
}

\blue{\begin{bproof}
It was proved in \cite[Proposition 4.6]{jj1} that every $(\sleq)$-lattice ideal is a specific ideal. Moreover, a $(\sleq)$-lattice ideal $A$ is regular (since every $x\in A$ can be written as $x=\spec (x^+)-\spec (x^-)$ where $\spec (x^+),\spec (x^+)\in A_{sp}$), so we only need to prove the converse. Let $A$ be a regular specific ideal. Then $A$ is $(\sleq)$-order convex and if $x\in A$ then also $\ul{x}\in A$ and $\lu{x}\in A$. Since $A$ is regular, there is an element $u\in A_{sp}$ such that $\ul{x}\sleq u$ and $\lu{x}\sleq u$.  
By \cite[Proposition 3.16]{jj1} the absolute value of $x$ formed with respect to $\sleq$ is given by 
$\spec |x| =\spsup \{\ul{x},\lu{x}\}$. 
Thus we have $0\sleq \spec |x| =\spsup \{\ul{x},\lu{x} \} \sleq u \in A$. Since $A$ is $(\sleq)$-order convex, it follows that $\spec |x|\in A$. This shows that $A$ is a $(\sleq)$-lattice ideal. 
\end{bproof}}

\blue{It is well known that the set of lattice ideals in a Riesz space is a distributive lattice. Also, every regular quasi-ideal is a regular specific ideal, so putting all this together with Theorems \ref{lattice_ideals} and \ref{distrib_lattice} we obtain the following:}

\blue{\begin{cor}
Let $V$ be a mixed lattice space that is a lattice with respect to $\sleq$, and denote by $\mathcal{L}(V)$ the set of all regular specific ideals of $V$, ordered by inclusion. Then  $\mathcal{L}(V)$ is a distributive lattice where $A\vee B=A+B$ and $A\wedge B=A\cap B$. Moreover, $\mathcal{L}(V)$ has the smallest element $\{0\}$ and the largest element $V$, and the set $\mathcal{R}(V)$ of all regular quasi-ideals of $V$ is a sub-lattice of $\mathcal{L}(V)$.
\end{cor}}


\blue{
Now we turn to the discussion of bands in mixed lattice spaces.

\begin{defn}\label{bands}
Let $V$ be a mixed lattice space. 
A specific ideal $A$ is called a \emph{specific band} if $\spsup E \in A$ whenever $E$ is a non-empty subset of $A$ such that $\spsup E$ exists in $V$. 
If $A$ is a quasi-ideal with the above property then $A$ is called a \emph{quasi-band}. 
An ideal $B$ is called a \emph{band} if $\sup E \in A$ whenever $E$ is a non-empty subset of $A$ such that $\sup E$ exists in $V$. 
\end{defn}

It follows from the identity $\inf E = -\sup (-E)$ that if $A$ is a band and $E$ is a non-empty subset of $A$ such that $\inf E$ exists in $V$ then $\inf E \in A$. Similarly, if $A$ is a specific band and $\spinf E$ exists in $V$ then $\spinf E \in A$. 

It is also clear that every 
quasi-band is a specific band. For the sequel, we need to introduce the following notions.

\begin{defn}
A specific ideal $A$ is called a \emph{weak specific band} if $\strsup E \in A$ whenever $E$ is a non-empty subset of $A$ such that $\strsup E$ exists in $V$. 
A quasi-ideal with the above property is called a \emph{weak quasi-band}, and an ideal with the above property is called a \emph{weak band}. 
\end{defn}

Weak bands and weak specific bands in mixed lattice semigroups were introduced by Arsove and Leutwiler \cite{ars}. 

Clearly, every specific band is a weak specific band, every quasi-band is a weak quasi-band and every band is a weak band. 
Now we can state the following characterization for weak bands and weak specific bands.

\begin{lemma2}\label{weakband_lemma}
If $A$ is a (specific) ideal 
in $V$ then the following are equivalent.
\begin{enumerate}[(a)]
\item
$A$ is a weak (specific) band. 
\item
$\strsup E\in A$ whenever $E$ is a non-empty subset of $A_p$ such that $\strsup E$ exists in $V$.
\item
$\strsup E\in A$ whenever $E$ is a non-empty subset of $A_{sp}$ such that $\strsup E$ exists in $V$.
\end{enumerate}
\end{lemma2}

\begin{bproof}
The implication $(a)\implies (b)$ is clear. Condition $(b)$ obviously implies $(c)$, since $A_{sp}\subseteq A_p$. 
Assume that $(c)$ holds and let $E$ be a non-empty subset of $A$ such that $u_0=\strsup E$ exists in $V$. Fix an element $x\in E$ and define $D=\{x\uenv u -x: u\in E\}$. Then $D$ is a non-empty subset of $A_{sp}$ and since $u_0\sgeq x$, by property (M6a) we have $x\uenv u_0=u_0$. Using Proposition \ref{strongsup_properties} we then have $\strsup D=\strsup\{x\uenv u -x:u\in E\}=x\uenv u_0 -x=u_0 -x\in A$, and hence $u_0=x+(u_0 -x)\in A$. 
\end{bproof}
}

\section{Disjoint complements}
\label{sec:s5}

If $x,y\in V$ and $x\lenv y=0$ then $x$ is said to be \emph{left-disjoint} with $y$ and $y$ is \emph{right-disjoint} with $x$. The reason for this terminology is, of course, the fact that in general $x\lenv y\neq y\lenv x$.
Next we will investigate the sets of those elements that are left or right disjoint with each element of a given subspace $A$.  
It follows immediately from the inequalities $x\sgeq x\lenv y$ and $y\geq x\lenv y$ that if $x\lenv y=0$ then we must have $x\sgeq 0$ and $y\geq 0$. Because of this, we will first consider disjointness for positive elements only. As it turns out, these sets of positive disjoint elements are in fact mixed lattice cones. We then define the left and right disjoint complements as the specific ideal and the ideal generated by these cones.

We begin with the left disjoint complement.

\begin{lemma2}\label{disjoint_sum_left}
If $x,y\sgeq 0$ and $z\geq 0$ with $y\lenv z=0$ then $(x+y)\lenv z = x\lenv z$. In particular, if also $x\lenv z=0$ then $(x+y)\lenv z=0$.
\end{lemma2}

\begin{bproof}
If  $y\lenv z=0$ and $x\sgeq 0$ then, using Theorem \ref{new_ineq} we obtain
$$
(x+y)\lenv z \sleq (x+y)\lenv (x+z)=x +y\lenv z=x.
$$  
On the other hand, $(x +y)\lenv z\leq z$, so we have $(x+y)\lenv z \leq x\lenv z$. The reverse inequality $0\leq x\lenv z \leq (x+y)\lenv z $ holds by (M5), and the lemma is proved. 
\end{bproof}

\blue{
We now introduce the set $\apld=\{x\in V: x\lenv z=0 \; \textrm{ for all } \; z\in A_p \}$ and study its properties.
}

\begin{thm}\label{pos_disj}
Let $V$ be a mixed lattice space and $A$ a mixed lattice subspace of $V$. 
Then the set 
$\apld$ defined above is a mixed lattice cone in $V$.
\end{thm}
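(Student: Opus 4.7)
The plan is to verify the three cone axioms and then show closure under the mixed envelopes, making $\apld$ both a cone and a mixed lattice sub-semigroup. I would begin with the observation that $\apld \subseteq V_{sp}$: taking $z = 0 \in A_p$, any $x \in \apld$ satisfies $x \lenv 0 = 0$, and (M4) then forces $0 \sleq x$. This makes axiom (iii), $\apld \cap (-\apld) = \{0\}$, immediate from the antisymmetry of $\sleq$.

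For axiom (i), let $\alpha \geq 0$ and $x \in \apld$; the case $\alpha = 0$ is trivial, and for $\alpha > 0$ I would rewrite $(\alpha x) \lenv z = (\alpha x) \lenv (\alpha (z/\alpha))$, apply (M9), and use that $z/\alpha \in A_p$ since $A$ is a subspace, obtaining $(\alpha x) \lenv z = \alpha\, (x \lenv (z/\alpha)) = 0$. Axiom (ii) is precisely the content of Lemma \ref{disjoint_sum_left} applied to $x, y \in \apld$ (both $\sgeq 0$ by the previous observation) and $z \in A_p$, yielding $(x+y)\lenv z = x\lenv z = 0$.

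It remains to verify that $\apld$ is closed under $\lenv$ and $\uenv$. Fix $x, y \in \apld$ and $z \in A_p$. For the lower envelope, (M4) gives $x \lenv y \sleq x$, so (M5) yields $(x \lenv y) \lenv z \leq x \lenv z = 0$; since $0 \sleq x \lenv y$ and $z \geq 0$, I also get $(x \lenv y) \lenv z \geq 0 \lenv z = 0$, so the value is exactly $0$. For the upper envelope, $x, y \sgeq 0$ together with (M8a) give $x \uenv y \sleq x + y$, and (M5) then yields $(x \uenv y) \lenv z \leq (x+y) \lenv z = 0$ (invoking axiom (ii) already established); the same non-negativity argument via $0 \lenv z = 0$ shows the value is $0$.

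The only step that is not pure manipulation of the (M)-rules is additivity, which is handled for us by Lemma \ref{disjoint_sum_left}; the rest is a routine combination of (M4), (M5), (M8a), and (M9), riding on the key reduction $\apld \subseteq V_{sp}$, which forces everything we need about specific positivity.
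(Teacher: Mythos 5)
Your proof is correct and follows essentially the same route as the paper's: additivity is delegated to Lemma \ref{disjoint_sum_left}, the containment $\apld\subseteq V_{sp}$ supplies the needed specific positivity, and closure under the envelopes is obtained by sandwiching $(\,\cdot\,)\lenv z$ between $0$ and $x\lenv z=0$ via (M5). The only cosmetic differences are in two sub-steps: for positive homogeneity the paper bounds $0\leq(\alpha x)\lenv z \leq c(x\lenv z)$ with $c=\max\{\alpha,1\}$ rather than invoking (M9) with $z/\alpha\in A_p$, and for $\uenv$-closure it writes $y\uenv x = y + (x - x\lenv y)$ and checks $x-x\lenv y\in\apld$ rather than using your bound $x\uenv y\sleq x+y$ from (M8a); both variants are valid.
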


\begin{bproof}
Let $x,y\in \apld$ and $z\in A_p$. Then $(x+y)\lenv z=0$ by the preceding lemma, and hence $x+y\in \apld$. 
If $0\leq \alpha\in\R$ and we put $c=\max\{\alpha, 1\}$, then $0\leq (\alpha x)\lenv z\leq  c(x\lenv z)=0$. Thus, $\alpha x \in \apld$. 
This shows that $\apld$ is a cone in $V$. 
Next we note that if $v=x\lenv y$ then  
$0\sleq v\sleq x$ and it follows that for every $z\in A_p$ we have  $0\leq v\lenv z\leq x\lenv z=0$, and thus $v\lenv z=0$. This shows that $v\in \apld$. If we set $w=x-x\lenv y$ then $0\sleq w\sleq x$, which implies that $0\leq w\lenv z \leq x\lenv z =0$ for all $z\in A_p$. Thus $w\in \apld$, and since $\apld$ is a cone, we have $y+w=y+ x-x\lenv y=y\uenv x \in \apld$. Hence, $\apld$ is a mixed lattice cone in $V$. 
\end{bproof}

The preceding result motivates the following definition.

\begin{defn}\label{ldc}
Let $A$ a mixed lattice 
subspace of a mixed lattice space $V$. The \emph{left disjoint complement} of $A$ is the specific ideal ${}^{\perp}A$ generated by the cone $\apld=\{x\sgeq 0: x\lenv z=0 \; \textrm{ for all } \; z\in A_p \}$.
\end{defn}

\begin{remark2}\label{remark_leftdisjcompl}
We should point out that, more generally, if $E$ is any subset of $V$ such that $E_p$ is non-empty, then ${}^{\perp}E_p$ is a mixed lattice cone in $V$. However, for the purposes of the present paper it is sufficient to restrict ourselves to mixed lattice subspaces. By doing so we can avoid some unnecessary complications that arise if one considers non-trivial subspaces $S$ such that $S_p=\{0\}$. 
For instance, in such cases the algebraic sum of a subspace and its disjoint complement would not be a direct sum, in general. We will discuss these matters further at the end of this paper. 
\end{remark2}

\blue{
\begin{thm}\label{left_disjoint_comp_is_band}
If $E$ is a subset of $({}^{\perp}A)_{sp}$ such that 
$\spsup E$ exists in $V$ then $\spsup E \in ({}^{\perp}A)_{sp}$. 
In particular,  
${}^{\perp}A$ is a regular weak specific band in $V$. Moreover, if ${}^{\perp}A$ is an ideal, then it is a weak regular band.
\end{thm}
}

\begin{bproof}
We will first show that ${}^{\perp}A$ is regular. 
Let $W = \apld - \apld$. It is clear that $W$ is a subspace and $W\subseteq {}^{\perp}A$, so we only need to show that $W$ is a specific ideal. 
It follows from Theorem \ref{pos_disj} and Theorem \ref{mlcone1} that $W$ 
is a mixed lattice subspace 
in $V$. Moreover, if $0\sleq y\sleq x$ with $x\in W$ then for every $z\in A_p$ we have  $0\leq y\lenv z\leq x\lenv z=0$, so $y\lenv z=0$ and thus $y\in W$. Hence, $W$ is a regular specific ideal,  
and so $W = {}^{\perp}A$. 

\blue{
Let $E$ be a non-empty 
subset of $({}^{\perp}A)_{sp}$ such that $u_0=\spsup E$ exists in $V$. Then, using (M1) and Theorem \ref{new_ineq} we have 
$$
u-u\lenv z = z\uenv u -z \sleq z\uenv u_0 -z = u_0 - u_0 \lenv z
$$
for all $u\in E$ and $z\in A_p$. But $u\lenv z=0$, so the above inequality reduces to $u\sleq u_0 - u_0 \lenv z$. Thus the element $u_0 - u_0 \lenv z$ is a $(\sleq)$-upper bound of the set $E$, so we have $u_0\sleq u_0 - u_0 \lenv z$. This implies that $0\leq u_0 \lenv z \sleq 0$ (where the inequality $0\leq u_0 \lenv z$ follows by (M5), since $u_0\sgeq 0$ and $z\geq 0$), and so $u_0 \lenv z=0$. Hence, $u_0 \in ({}^{\perp}A)_{sp}$. 
In particular, if $u_0$ is the strong supremum then ${}^{\perp}A$ is a weak specific band, and   
if ${}^{\perp}A$ is an ideal then 
${}^{\perp}A$ is a weak band, by Lemma \ref{weakband_lemma}.}
\end{bproof}

Next we turn to the right disjoint complement which we define in a similar manner as the left disjoint complement, but the situation is slightly more complicated. 
Let $A$ be a mixed lattice subspace and consider the set $\Ss(A)=\{x\geq 0:z\lenv x=0 \; \textrm{ for all } \; z\in A_{sp} \}$. 
It is easy to see that $\Ss(A)$ is closed under multiplication by positive scalars, \blue{by using a similar argument as in the proof of Theorem \ref{pos_disj}}. 
However, in general, $\Ss(A)$ is not closed under addition (see Example \ref{rdisjoint_esim}).

To get a better understanding of the situation, let us briefly examine the set $\Ss(A)$ more closely. 
Let $\mathcal{B}$ be the family of all those subsets of $\Ss(A)$ that are closed under addition. Notice that $\mathcal{B}$ is non-empty since $\{0\}\in \mathcal{B}$. Let $\mathcal{B}$ be ordered by inclusion. If $\mathcal{C}$ is a totally ordered subset of $\mathcal{B}$ then $\bigcup \{C:C\in \mathcal{C}\}$ is an upper bound of $\mathcal{C}$, and so  
by Zorn's lemma $\mathcal{B}$ has maximal elements. Let us denote by $\mathcal{M}(A)$ the set of maximal elements of $\mathcal{B}$, that is, the set of those subsets of $\Ss(A)$ that are maximal with respect to the property of being closed under addition. 
If $\Ss(A)$ itself is closed under addition then $\Ss(A)$ is the only element of $\mathcal{M}(A)$. Clearly, $\Ss(A)=\bigcup\{C:C\in \mathcal{M}(A)\}$, and each set in $\mathcal{M}(A)$ is a $(\leq)$-order convex cone. 

\blue{
\begin{prop}\label{subspace_components}
The set $\Ss(A)$ is $(\leq)$-order convex, and each $C\in \mathcal{M}(A)$ is a $(\leq)$-order convex cone.  
\end{prop}

\begin{bproof}
We divide the proof into 5 steps. 

(1)\, If $0\leq x\leq y$ with $y\in \Ss(A)$ then $0\leq z\lenv x\leq z\lenv y=0$ for all $z\in A_{sp}$, and so $z\lenv x=0$ for all $z\in A_{sp}$, proving that $\Ss(A)$ is $(\leq)$-order convex. 

(2)\, Let $C\in \mathcal{M}(A)$. Then $C$ is closed under addition by definition.

(3)\, If $0\leq x \leq y$ with $y\in C$, then $x\in \Ss(A)$ by step (1). Now, if $x\notin C$ and $x+z\in \Ss(A)$ for all $z\in C$ then $0\leq ax+bz\leq (a+b)(x+z) \in \Ss(A)$ for all $z\in C$ and $a,b\in \R_+$. By step (1) this implies that the set $D=\{ax+bz: z\in C, \, a,b\in \R_+ \}$ is contained in $\Ss(A)$. Clearly, $D$ is closed under addition, and $C$ is contained in $D$, contradicting the maximality of $C$. Hence, there exists some $z\in C$ such that $x+z\notin \Ss(A)$. But then $0\leq x+z\leq y+z\in C$, and by step (1) this implies that $x+z\in \Ss(A)$, a contradiction. Thus $x\in C$, and $C$ is $(\leq)$-order convex.

(4)\, If $x\in C$ then an inductive argument applied to step (2) shows that $nx\in C$ for all $n\in \N$. If $a\in \R_+$ then we can find some $m\in \N$ such that $a\leq m$, and so $0\leq ax\leq mx\in C$. By step (3) this implies that $ax\in C$, showing that $C$ is closed with respect to multiplication by positive scalars.

(5)\, Finally, $C\cap (-C)=\{0\}$, because the elements of $C$ are positive and $0\in C$. By steps (2)--(5), $C$ is a $(\leq)$-order convex cone. 
\end{bproof}
}

Due to the above result, we will call $\mathcal{M}(A)$ the \emph{set of maximal right disjoint cones of} $A$. It was noted above that, in general, the set $\Ss(A)$ is not closed under addition. In this sense, the set $\Ss(A)$ is ''too large''. However, we have the following result which is the analogue of Lemma \ref{disjoint_sum_left}.

\begin{lemma2}\label{disjoint_sum_lemma}
If $z,x\sgeq 0$ and $y\geq 0$ with $z\lenv y=0$ then $z\lenv (x+y) = z\lenv x$. In particular, if also $z\lenv x=0$ then $z\lenv (x+y)=0$.
\end{lemma2}

\begin{bproof}
Let $v=z\lenv (x+y)$. Then $v\leq x+y$, so $v-x\leq y$. On the other hand, since $x\sgeq 0$ we have $v-x\sleq v\sleq z$. Hence, $v-x\leq z\lenv y=0$, and so $v\leq x$. Since we also have $v\sleq z$, it follows that $0\leq v\leq z\lenv x$. The reverse inequality $0\leq z\lenv x \leq z\lenv (x+y)$ holds by (M5), and the desired result follows.
\end{bproof}

\blue{
It is clear that the set  $\aspp=\{x\sgeq 0:z\lenv x=0 \; \textrm{ for all } \; z\in A_{sp} \}$ is contained in $\Ss(A)$. The set $\aspp$ is in fact a mixed lattice cone. 
}

\begin{thm}\label{rightdisjoint}
If $A$ is a mixed lattice subspace of $V$ then the set $\aspp$  
defined above is a mixed lattice cone in $V$. 
\end{thm}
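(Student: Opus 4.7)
The plan is to mirror the proof of Theorem \ref{pos_disj}, replacing Lemma \ref{disjoint_sum_left} with its right-side counterpart Lemma \ref{disjoint_sum_lemma}, and exploiting quasi-regularity so that intermediate elements remain specifically positive.

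First, to show that $\aspp$ is a cone, I would verify the three defining properties. Closure under addition follows directly from Lemma \ref{disjoint_sum_lemma}: if $x,y\in\aspp$ and $z\in A_{sp}$, then $z,x\sgeq 0$ and $y\sgeq 0$ (so in particular $y\geq 0$) with $z\lenv y=0$, giving $z\lenv(x+y)=z\lenv x=0$; combined with $x+y\sgeq 0$, this places $x+y$ in $\aspp$. For closure under multiplication by $\alpha\geq 0$, I would set $c=\max\{\alpha,1\}$; then $z\sleq cz$ (since $c\geq 1$ and $z\sgeq 0$) and $\alpha x\leq cx$ (since $c\geq\alpha$ and $x\geq 0$), so (M5) and (M9) yield $z\lenv(\alpha x)\leq(cz)\lenv(cx)=c(z\lenv x)=0$, while (M8b) applied to $0\sleq z$ and $0\sleq\alpha x$ gives $z\lenv(\alpha x)\sgeq 0$. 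The condition $\aspp\cap(-\aspp)=\{0\}$ is immediate from $(\sleq)$-positivity.

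Next, I would establish closure under the mixed envelopes. Take $x,y\in\aspp$ and any $z\in A_{sp}$. For the lower envelope, quasi-regularity via (M8b) gives $x\lenv y\sgeq 0$, while (M4) combined with (M7) gives $x\lenv y\leq x$; applying (M5) with $z\sleq z$, we obtain $z\lenv(x\lenv y)\leq z\lenv x=0$, and (M8b) supplies the reverse inequality, so $x\lenv y\in\aspp$. For the upper envelope, I would use identity (M1) to write $x\uenv y=x+w$, where $w=y-y\lenv x$. Here $y\lenv x\sgeq 0$ by (M8b) and $y\lenv x\sleq y$ by (M4), so $0\sleq w\sleq y$, and in particular $w\leq y$ by (M7). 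The same argument as for the lower envelope then yields $z\lenv w\leq z\lenv y=0$, so $w\in\aspp$, and closure of $\aspp$ under addition gives $x\uenv y=x+w\in\aspp$.

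I do not anticipate any serious obstacle: the argument is structurally parallel to Theorem \ref{pos_disj}. The main technical point is to verify carefully that all intermediate elements lie in $V_{sp}$ rather than only in $V_p$, so that both (M8b) and the hypotheses of Lemma \ref{disjoint_sum_lemma} are available at every step; this is precisely where the assumption of quasi-regularity is used essentially. The asymmetry between $\apld$ and $\aspp$ — one requires left-disjointness with elements of $A_p$, the other right-disjointness with elements of $A_{sp}$ — explains why the stronger positivity condition $x\sgeq 0$ in the definition of $\aspp$ is needed here to make the additive closure work.
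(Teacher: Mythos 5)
Your proof is correct and follows essentially the same route as the paper, which simply observes that the argument of Theorem \ref{pos_disj} carries over with additivity now supplied by Lemma \ref{disjoint_sum_lemma}; your explicit verifications of the scalar and envelope closures (via (M5), (M8b), (M9) and the decomposition $x\uenv y = x + (y - y\lenv x)$ from (M1)) are exactly the mirrored versions of the steps in that earlier proof. No gaps.
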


\begin{bproof}
The proof of the fact that $\aspp$ is a mixed lattice cone is similar to the proof of Theorem \ref{pos_disj}. The only real difference is in showing that the set $\aspp$ is closed under addition, and this follows immediately from the preceding lemma. 
\end{bproof}

\begin{defn}\label{rdc}
Let $V$ be a mixed lattice space and $A$ a mixed lattice subspace of $V$. The \emph{right disjoint complement} of $A$ is the ideal ${A}^{\perp}$ generated by the cone $\aspp=\{x\sgeq 0:z\lenv x=0 \; \textrm{ for all } \; z\in A_{sp} \}$.
\end{defn}

\begin{remark2}
In the above definition we require $A$ to be a mixed lattice subspace for similar reasons that were explained in Remark \ref{remark_leftdisjcompl} considering the left disjoint complement. The assumption that $A$ is a mixed lattice subspace guarantees that $A$ contains non-zero specifically positive elements (except, of course, in the trivial case $A=\{0\}$). 
\end{remark2}

\begin{thm}\label{right_disjoint_ideal}
$A^{\perp}$ is a weak band and 
$z\lenv x=0$ for all $0\leq x\in A^{\perp}$ and $z\in A_{sp}$. 
\end{thm}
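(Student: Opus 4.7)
My plan is to first establish the second claim $z\lenv x=0$ for every $0\le x\in A^{\perp}$ and $z\in A_{sp}$. By Theorem~\ref{ideal_description}, any such $x$ can be written $x=x_1-x_2$ with $0\le x_i\le u_i$, $u_i\in\aspp$; since $x\ge 0$, we have $x_1=x+x_2\ge x$, so $0\le x\le x_1\le u_1$, and (M5) gives $0\le z\lenv x\le z\lenv u_1=0$. The same inequality shows $(A^{\perp})_{sp}=\aspp$: the inclusion $\aspp\subseteq(A^{\perp})_{sp}$ is trivial, while the reverse follows because any $\sgeq 0$ element of $A^{\perp}$ now automatically satisfies the disjointness, hence lies in $\aspp$. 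Running the identical argument with $W=\aspp-\aspp$ in place of $A^{\perp}$ yields $W_{sp}=\aspp$ and shows that $W$ is mixed-order convex; combined with Theorems~\ref{rightdisjoint} and~\ref{mlcone1}, which make $W$ a mixed lattice subspace, this makes $W$ a quasi-ideal.

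To prove that $A^{\perp}$ is a band I invoke Lemma~\ref{band_lemma}(c) and take a $(\le)$-upwards-directed $E\subseteq(A^{\perp})_{sp}=\aspp$ with $u_0=\sup E$ existing in $V$. Proposition~\ref{sup_properties} applied to $\lenv$ with $x=z\in A_{sp}$ immediately gives $z\lenv u_0=\sup_{u\in E}(z\lenv u)=0$. The delicate step is showing $u_0\sgeq 0$: unlike the analogous argument for ${}^{\perp}A$ in Theorem~\ref{left_disjoint_comp_is_band}, substituting $z=0$ into the disjointness here yields only $0\lenv u_0=0$, i.e.\ $u_0\ge 0$, not $u_0\sgeq 0$. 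To bridge the gap, I exploit the fact that every $u\in\aspp$ satisfies $0\uenv u=u$, and apply Proposition~\ref{sup_properties} to the upper envelope with $x=0$:
\[
0\uenv u_0=\sup_{u\in E}(0\uenv u)=\sup_{u\in E} u=u_0.
\]
Since $0\uenv u_0$ is by definition the $(\le)$-minimum of $\{w:w\sgeq 0,\;w\ge u_0\}$, the equality $0\uenv u_0=u_0$ forces $u_0$ to lie in that set, so $u_0\sgeq 0$. Combined with $z\lenv u_0=0$ this yields $u_0\in\aspp\subseteq A^{\perp}$.

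For the quasi-band property of $W$ I apply Lemma~\ref{spband_lemma}: let $C\subseteq W_{sp}=\aspp$ be $(\sleq)$-upwards-directed with $u_0=\spsup C$ existing in $V$. Here $u_0\sgeq 0$ is automatic (any $u\in C$ satisfies $0\sleq u\sleq u_0$), Theorem~\ref{directed_sup} identifies $u_0$ with $\sup C$, and Proposition~\ref{sup_properties} then gives $z\lenv u_0=0$ for every $z\in A_{sp}$, placing $u_0$ in $\aspp\subseteq W$. The genuine obstacle in the whole argument is the $u_0\sgeq 0$ step of the band proof; everything else is a direct combination of Theorem~\ref{ideal_description}, (M5), Proposition~\ref{sup_properties}, Theorem~\ref{directed_sup}, and the two closure lemmas.
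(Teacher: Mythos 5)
Your proof is correct and follows essentially the same route as the paper: identify $(A^{\perp})_{sp}$ with $\aspp$ via the description of the generated ideal, then verify the band and quasi-band conditions through Lemma \ref{band_lemma}(c), Lemma \ref{spband_lemma}, Theorem \ref{directed_sup} and Proposition \ref{sup_properties}. The only cosmetic difference is at the $u_0\sgeq 0$ step, where you use $0\uenv u_0=\sup_{u\in E}(0\uenv u)=u_0$ together with (M6), while the paper works with $\rupp{m}$ and the $(\leq)$-order convexity of the ideal $A^{\perp}$; the two maneuvers are equivalent.
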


\begin{bproof}
\blue{We will first show that $\aspp=\{w\in {A}^{\perp}: w\sgeq 0\}$, that is, the ideal ${A}^{\perp}$ does not contain any $(\sleq)$-positive elements that are not in $\aspp$. This will also justify the notation used for the set $\aspp$ in Definition \ref{rdc}. For this, 
let $W=\aspp -\aspp$ and $0\sleq x\leq y\in W$. Then $y=u-v$ for some $u,v\in \aspp$,  
and so $0\sleq x\leq u\in \aspp$. This implies that $0\sleq z\lenv x \leq z\lenv u=0$ for all $z\in A_{sp}$, and therefore $z\lenv x=0$, so $x\in \aspp\subseteq W$. This shows that $W$ is a regular quasi-ideal, and so by Corollary \ref{ideals_corollary} and Theorem \ref{quasi-ideal_charact}, $W_{sp}=\aspp=\{w\in {A}^{\perp}: w\sgeq 0\}$.

Now, if $z\in A_{sp}$ and  $0\leq x\in A^{\perp}$ then $0\leq x\leq \rupp{x}\in \aspp$, so $0\leq z\lenv x\leq z\lenv \rupp{x}=0$, which implies that $z\lenv x=0$. Next, let $E\subseteq (A^{\perp})_{sp}$ be a 
non-empty set and assume that $x_0=\strsup E$ exists in $V$. Then $x_0\sgeq 0$, and  
since $z\lenv x =0$ for all $x\in E$ and $z\in A_{sp}$, it follows by Proposition \ref{strongsup_properties}  that $z\lenv x_0 = \strsup_{x\in E} (z\lenv x)=0$, and hence $x_0\in (A^{\perp})_{sp}$. 
By Lemma \ref{weakband_lemma} this shows that $A^{\perp}$ is a weak band.}
\end{bproof}

The next result provides more information about the relationship between $A^{\perp}$ and $\Ss(A)$.

\begin{thm}\label{right_disjoint_ideal2}
If $A$ is a specific ideal 
then $(A^{\perp})_p \subseteq \bigcap \{C:C\in \mathcal{M}(A)\}$. 
\end{thm}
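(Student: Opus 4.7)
The plan is to show that every $x\in\app$ lies in each $C\in\mathcal{M}(A)$, by extending $C$ with $x$ and invoking maximality. The first step is to verify the inclusion $\app\subseteq\Ss(A)$. Applying Corollary~\ref{ideals_corollary} to the ideal $A^{\perp}$, combined with the identity $(A^{\perp})_{sp}=\aspp$ established in the proof of Theorem~\ref{right_disjoint_ideal}, every $x\in\app$ satisfies $0\leq x\leq u$ for some $u\in\aspp$. The monotonicity rule (M5) then gives $0\leq z\lenv x\leq z\lenv u=0$ for every $z\in A_{sp}$, so $x\in\Ss(A)$.

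Fix now $C\in\mathcal{M}(A)$ and consider the enlargement
\[
C'=\{\,c+nx : c\in C,\; n\in\N\cup\{0\}\,\}.
\]
This set is closed under addition, since $(c_1+n_1 x)+(c_2+n_2 x)=(c_1+c_2)+(n_1+n_2)x$ and $C$ is itself closed under addition. Moreover $C'$ contains both $C$ (take $n=0$) and $x$ (take $c=0$, $n=1$, using that $0\in C$ because $C$ is a cone by Proposition~\ref{subspace_components}). The decisive step is to verify that $C'\subseteq\Ss(A)$. Given $c\in C$ and $n\geq 0$, one has $0\leq c+nx\leq c+nu$, where $u$ is the dominating element from the first step. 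Since $\aspp$ is a cone we have $nu\in\aspp$, and since $c\in C\subseteq\Ss(A)$ we have $z\lenv c=0$ for every $z\in A_{sp}$. Lemma~\ref{disjoint_sum_lemma}, applied with $nu$ playing the role of the specifically positive summand and $c$ the role of $y$, yields $z\lenv(c+nu)=z\lenv(nu)=0$. A second application of (M5) gives $0\leq z\lenv(c+nx)\leq z\lenv(c+nu)=0$, so $c+nx\in\Ss(A)$.

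Thus $C'\in\mathcal{B}(A)$ and $C\subseteq C'$; the maximality of $C$ forces $C'=C$, and in particular $x\in C$. As $C\in\mathcal{M}(A)$ was arbitrary, $x\in\bigcap\{C:C\in\mathcal{M}(A)\}$, completing the proof. The main subtlety to watch is that $x$ itself need not be specifically positive, so Lemma~\ref{disjoint_sum_lemma} cannot be applied to $c+nx$ directly; the workaround is to pass to the $(\sleq)$-positive dominator $u\in\aspp$ and rely on the monotonicity of $\lenv$ to transfer the conclusion back down to $x$.
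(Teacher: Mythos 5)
Your proof is correct and follows essentially the same route as the paper's: the key ingredients --- Lemma \ref{disjoint_sum_lemma}, the maximality of each $C\in\mathcal{M}(A)$ inside $\Ss(A)$, and the domination $0\leq x\leq u$ with $u\in\aspp$ --- are identical. The only difference is organizational: the paper first shows $\aspp+C\subseteq\Ss(A)$ to conclude $\aspp\subseteq C$ and then invokes the $(\leq)$-order convexity of $C$ from Proposition \ref{subspace_components} to pass to general positive elements of $A^{\perp}$, whereas you fold that order-convexity step into the enlargement $C'$ via the monotonicity bound $0\leq z\lenv(c+nx)\leq z\lenv(c+nu)$.
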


\begin{bproof}
We will first show that that $\aspp \subseteq \bigcap \{C:C\in \mathcal{M}(A)\}$. 
Let $A$ be a specific ideal and $C\in \mathcal{M}(A)$ with $x\in C$. Then it follows by Lemma \ref{disjoint_sum_lemma} that for every $w\in A_{sp}$ and $z\in \aspp$ we have 
$w\lenv(x+z)=0$, and so $x+z\in \Ss(A)$. This shows that $\aspp+C\subseteq \Ss(A)$. But $C$ is, by definition, a maximal cone in $\Ss(A)$, so we must have $\aspp\subseteq C$. This proves that $\aspp \subseteq \bigcap \{C:C\in \mathcal{M}(A)\}$.

Next, if  
$0\leq y\in A^{\perp}$ then $0\sleq \rupp{y}\in \aspp$. 
Thus, by what was just proved above, we have $\rupp{y}\in \bigcap \{C:C\in \mathcal{M}(A)\}$, and the inequality $0\leq y\leq \rupp{y}$ then implies that $y\in \bigcap \{C:C\in \mathcal{M}(A)\}$, by Proposition \ref{subspace_components}. Hence, $(A^{\perp})_p \subseteq \bigcap \{C:C\in \mathcal{M}(A)\}$.
\end{bproof}

In the next theorem we collect some basic properties of the 
disjoint complements. Some of them are straightforward consequences of the definitions.

\begin{thm}\label{properties_of_complements}
If $A$ is a mixed lattice subspace then the following hold.
\begin{enumerate}[(a)]
\item
$A \cap {}^{\perp}A=\{0\}$ \blue{and $A \cap A^{\perp}=\{0\}$}. 
\item
$A\subseteq ({}^{\perp}A)^{\perp}$. 
\item
If $A$ is regular then $A\subseteq {}^{\perp}(A^{\perp})$. 
\item
If $A$ is a quasi-ideal then $({A}^{\perp})_{sp}\subseteq ({}^{\perp}A)_{sp}$. 
\item
If $A$ is a quasi-ideal and ${}^{\perp}A$ is quasi-ideal then $({A}^{\perp})_{sp} = ({}^{\perp}A)_{sp}$. Moreover, if $A$ is a quasi-ideal  and ${}^{\perp}A$ is an ideal then ${}^{\perp}A={A}^{\perp}$.
\end{enumerate}
\end{thm}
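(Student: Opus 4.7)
The common thread throughout is to reduce each statement to the $(\sleq)$-positive cones, using the identifications $(A^{\perp})_{sp} = \aspp$ and $({}^{\perp}A)_{sp} = \apld$ that fall out of the proofs of Theorems \ref{right_disjoint_ideal} and \ref{left_disjoint_comp_is_band}, together with the fact that $A$, $A^{\perp}$ and ${}^{\perp}A$ are all mixed lattice subspaces (and hence closed under $\rupp{\cdot}$ and $\llow{\cdot}$).

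Parts (a) and (d) go through the same template. If $x$ lies in the intersection in question, then $\rupp{x}$ sits simultaneously in $A_{sp}$ and in $\apld$ (respectively $\aspp$). For any element $u$ of such an intersection, the defining disjointness relation with $z = u$ yields $u \lenv u = 0$, but (M6) also gives $u \lenv u = u$, so $u = 0$. Hence $\rupp{x} = 0$; applying the same reasoning to $-x$ and invoking $\rupp{(-x)} = \rlow{x}$ from Theorem \ref{absval}(a) gives $\rlow{x} = 0$. The two decompositions $x = \lupp{x} - \rlow{x} = \lupp{x}$ and $x = \rupp{x} - \llow{x} = -\llow{x}$, combined with $\lupp{x}, \llow{x} \geq 0$ (from (M4)), force $x \geq 0$ and $x \leq 0$, so $x = 0$. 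I would remark in passing that regularity of $A$ is not actually needed for (d). For (b), given $x \in A$, the element $\rupp{x} \in A_{sp} \subseteq A_p$ is annihilated from the left by every $z \in \apld$, placing $\rupp{x}$ in $(({}^{\perp}A)^{\perp})_{sp}$; the lower part satisfies $0 \leq \llow{x} \leq \rupp{\llow{x}}$ with $\rupp{\llow{x}} \in (({}^{\perp}A)^{\perp})_{sp}$ by the same argument, so $\llow{x}$ lies in the ideal $({}^{\perp}A)^{\perp}$ too. For (c), regularity lets me write $x = u - v$ with $u, v \in A_{sp}$; Theorem \ref{right_disjoint_ideal} gives $u \lenv w = v \lenv w = 0$ for every $w \in (A^{\perp})_p$, placing $u, v \in ({}^{\perp}(A^{\perp}))_{sp}$ and thus $x \in {}^{\perp}(A^{\perp})$.

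For (e) the main trick exploits mixed-order convexity of the quasi-ideal $A$. Given $x \in \aspp$ and $y \in A_p$, set $v = x \lenv \rupp{y}$. Both $x$ and $\rupp{y}$ are $\sgeq 0$, so (M8b) gives $v \sgeq 0$; and $v \leq \rupp{y} \in A$, so mixed-order convexity puts $v$ in $A_{sp}$. The defining property of $\aspp$ then yields $v \lenv x = 0$. But $v \sleq x$ makes $v$ itself a candidate in the maximum defining $v \lenv x$, and one verifies directly (using (M7) to convert $\sleq$ to $\leq$) that $v \lenv x = v$. Hence $v = 0$, i.e.\ $x \lenv \rupp{y} = 0$. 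Finally, $y \leq \rupp{y}$ together with the monotonicity (M5) delivers $0 \leq x \lenv y \leq x \lenv \rupp{y} = 0$.

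For (f), the inclusion $({}^{\perp}A)_{sp} \subseteq (A^{\perp})_{sp}$ is the dual of the (e) argument: given $x \in \apld$ and $z \in A_{sp}$, the element $w = z \lenv x$ satisfies $w \sgeq 0$ (by (M8b)) and $w \leq x \in {}^{\perp}A$, so mixed-order convexity of the quasi-ideal ${}^{\perp}A$ puts $w$ in $\apld$; taking $y = z \in A_p$ in the defining condition of $\apld$ together with the candidate observation from (e) forces $w = 0$. The reverse inclusion $(A^{\perp})_{sp} \subseteq ({}^{\perp}A)_{sp}$ is the delicate step, since the (e) proof requires $A$ to be a quasi-ideal whereas here $A$ is only a specific ideal; the main obstacle is precisely that $v = x \lenv \rupp{y}$ need no longer lie in $A_{sp}$ without mixed-order convexity of $A$, and I would try to bridge this by routing through $({}^{\perp}A)^{\perp}$ using (b) ($A \subseteq ({}^{\perp}A)^{\perp}$) and the quasi-ideal structure of ${}^{\perp}A$. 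The concluding statement then drops out: if moreover $A$ is a quasi-ideal and ${}^{\perp}A$ is an ideal, (e) and the dual just outlined combine to give $\aspp = \apld$, and since both ${}^{\perp}A$ and $A^{\perp}$ are then ideals with the same $(\sleq)$-positive cone, Corollary \ref{ideals_corollary} forces them to coincide.
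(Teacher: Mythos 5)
Parts (a)--(e) of your proposal are correct and run essentially along the paper's lines: the reduction to the cones $A_{sp}$, $\apld=({}^{\perp}A)_{sp}$ and $\aspp=({A}^{\perp})_{sp}$, the self-disjointness trick $u\lenv u=0$ versus $u\lenv u=u$ for elements of the relevant intersections, and the use of mixed-order convexity in (e) all match the paper's argument (the paper phrases (e) as $0\sleq x\lenv z\sleq x$ and $0\sleq x\lenv z\leq z$ forcing $x\lenv z\in ({A}^{\perp})_{sp}\cap A_{sp}=\{0\}$, which is the same computation routed slightly differently). Your observation that regularity is not needed in (d) is also correct: $\rupp{x}\in A_{sp}\cap\aspp$ already forces $\rupp{x}=\rupp{x}\lenv\rupp{x}=0$, and likewise $\rlow{x}=0$, whereas the paper's proof of (d) writes $x=u-v$ with $u,v\in A_{sp}$ and sandwiches $x$ between $-(v\lenv b)=0$ and $u\lenv a=0$, which genuinely uses regularity. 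So your version of (d) is a small but real improvement.

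The one genuine gap is the reverse inclusion $\aspp\subseteq\apld$ in the first claim of (f), which you explicitly leave as a plan (``I would try to bridge this by routing through $({}^{\perp}A)^{\perp}$'') rather than an argument. You are right that the proof of (e) does not transfer: without mixed-order convexity of $A$, the element $x\lenv z$ (or your $x\lenv\rupp{y}$) need not land in $A_{sp}$, and that is exactly where the argument stalls. You should be aware, however, that the paper closes this step only with the words ``the reverse inclusion follows from (e)'', even though (e) is stated under the stronger hypothesis that $A$ is a quasi-ideal; the mismatch you noticed is therefore present in the source itself, and is most plausibly resolved by reading the hypothesis of the first claim of (f) as ``$A$ is a quasi-ideal'' (as in the second claim). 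Under that reading, (e) applies directly, and your proof of the final statement --- $\apld=\aspp$ combined with Corollary \ref{ideals_corollary} applied to the two ideals ${}^{\perp}A$ and ${A}^{\perp}$ --- is complete and correct (the paper invokes Theorem \ref{smallest_ideal_inside_ideal} instead, to the same effect). As written, though, your routing through $({}^{\perp}A)^{\perp}$ is only a sketch, so the first claim of (f) remains unproved in your proposal until you either carry that idea out or restrict the hypothesis.
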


\begin{bproof}
\begin{enumerate}[(a)]
\item
If $x\in A \cap {}^{\perp}A$ then also $\rupp{x},\rlow{x}\in A \cap {}^{\perp}A$, since $A$ and ${}^{\perp}A$ are mixed lattice subspaces. It follows that $\rupp{x}=\rupp{x}\lenv \rupp{x} =0$ and so $x\leq 0$. Similarly $\rlow{x}=0$, which implies that $x\geq 0$. Hence $x=0$. \blue{By a similar argument we have $A \cap A^{\perp}=\{0\}$.}
\item
This follows from the observation that $A_p \subseteq (({}^{\perp}A)^{\perp})_{p}$. \blue{Indeed, 
it follows from the definitions that $A_{sp} \subseteq (({}^{\perp}A)^{\perp})_{sp}$. 
Hence, if $x\in A_{p}$, we have $\rupp{x}\in A_{sp}\subseteq (({}^{\perp}A)^{\perp})_{sp}$. Since $({}^{\perp}A)^{\perp}$ is an ideal, it follows from $0\leq x\leq \rupp{x}$ that $x\in (({}^{\perp}A)^{\perp})_{p}$, and so $A_p \subseteq (({}^{\perp}A)^{\perp})_{p}$.} 
\item
It is evident that $A_{sp} \subseteq {}^{\perp}\aspp$ and if $A$ is regular then $A=A_{sp}-A_{sp} \subseteq {}^{\perp}\aspp - {}^{\perp}\aspp= {}^{\perp}(A^{\perp})$. 
\item
Let $x\in ({A}^{\perp})_{sp}$ and $z\in A_{sp}$. Then 
the inequalities $0\sleq x\lenv z \sleq x$ and $0\sleq x\lenv z \leq z$ imply  
that $x\lenv z \in ({A}^{\perp})_{sp}\cap A_{sp}=\{0\}$, and so $x\in ({}^{\perp}A)_{sp}$.
\item
If $x\in ({}^{\perp}A)_{sp}$ and $z\in A_{sp}$ then it follows from $0\sleq z\lenv x\sleq z$ and $0\sleq z\lenv x\leq x$ that $z\lenv x\in A\cap {}^{\perp}A=\{0\}$. Hence, $x\in \aspp$ and so $({}^{\perp}A)_{sp}\subseteq \aspp$. The reverse inclusion follows from (e). 
Then ${}^{\perp}A=({}^{\perp}A)_{sp}-({}^{\perp}A)_{sp}=\aspp-\aspp$, by Theorem \ref{left_disjoint_comp_is_band}, so if ${}^{\perp}A$ is an ideal then the equality ${}^{\perp}A={A}^{\perp}$ follows from Theorem \ref{smallest_ideal_inside_ideal}.
\end{enumerate}
\end{bproof}

\begin{remark2}
We note that, 
in particular, $\{0\}^{\perp}=V$ and ${}^{\perp} V=\{0\}$. Moreover, $V^{\perp}=\{0\}$ holds always, and ${}^{\perp} \{0\}= V$ holds if and only if $V$ is regular. Indeed, if $V$ is not regular then ${}^{\perp}\{0\}=V_{sp}-V_{sp}\neq V$.
\end{remark2}

We now give an example to illustrate some of the ideas presented above.

\begin{example2}\label{rdisjoint_esim}
Let $V=\R^3$ where $\leq$ is the standard partial ordering where $(x,y,z)\geq (0,0,0)$ if $x,y,z\geq 0$. Define $\sleq$ to be the partial ordering with the positive cone $V_{sp}=\{(x,x,z): \, x,z\geq 0\}$. Then $V$ is a quasi-regular mixed lattice space. 
Consider the following subspaces: 
$A=\{(0,0,z):z\in \R\}$, $B=\{(x,y,0):x,y\in \R \}$, $C=\{(x,x,z):x,z\in \R\}$, $D=\{(x,x,0):x\in \R\}$, $E=\{(x,x,x):x\in \R\}$.  
Then $A$ is a regular ideal and $B$ is an ideal (but not regular) such that $B=A^{\perp}$ and $A= {}^{\perp}B$. Moreover, $C=V_{sp}-V_{sp}$ is a quasi-ideal, and it is the largest quasi-ideal in $V$. Also $D=B_{sp}-B_{sp}$ is a quasi-ideal, and it is the largest quasi-ideal contained in $B$. Finally, $E$ is a regular mixed lattice subspace but not a specific ideal. 

Next, $K_1=\{(0,y,z): \, y,z\geq 0\}$ and $K_2=\{(x,0,z): \, x,z\geq 0\}$ are the maximal right disjoint cones of $D$ and $\Ss(D)=K_1\cup K_2$, which is clearly not closed under addition. Moreover, $(D^{\perp})_{p}=A_{sp}= K_1\cap K_2$ (Theorem \ref{right_disjoint_ideal2}). 
The inclusion in Theorem \ref{right_disjoint_ideal2} may be proper. To see this, let us modify the mixed lattice space, and consider $U=\R^3$ where $\leq$ is the same as above, and define $\sleq$ as the partial ordering with the positive cone $U_{sp}=\{\alpha(0,0,1)+\beta(0,1,0)+ \gamma(1,1,0): \, \alpha,\beta,\gamma\geq 0\}$. Let all the subspaces be the same as above. Then $U$ is a regular mixed lattice space, and $F=\{(0,y,0):y\in \R\}$ is a regular ideal in $U$. Now $\Ss(F)=K_2$ which is closed under addition, but $(F^{\perp})_{p}=A_{sp}\subseteq K_2$. Note also that $A$ and $B$ are still ideals in $U$, this time they are both  regular, and $B=A^{\perp}$ and $A= {}^{\perp}B$ holds. 
\end{example2}


Our definitions of the left and right disjoint complements differ from the corresponding definition in the theory of Riesz spaces. 
We recall that if $E$ is a subset of a Riesz space $L$ then the disjoint complement of $E$ is defined as $E^{\perp}=\{x\in L: |x|\wedge |y|=0 \, \textrm{ for all } y\in E\}$.

In mixed lattice spaces the generalized absolute values exist, and this naturally raises the question whether it is possible to give the definitions of the disjoint complements in terms of the absolute values, like in Riesz spaces.  
The main difficulty here is that the asymmetric generalized absolute values are not necessarily positive with respect to the specific order. To deal with this issue we introduce the notion of a symmetric absolute value, which is defined in terms of the asymmetric absolute values. It has the advantage of being 
positive with respect to both partial orderings while retaining most of the other important properties of the absolute value.

\begin{defn}\label{symmap}
Let $V$ be a mixed lattice vector space and $x\in V$. The element \, $s(x)=\frac{1}{2}(\ul{x}+\lu{x})$\, is called the \emph{symmetric generalized absolute value} of $x$. 
\end{defn}

Next we derive some basic properties of the symmetric absolute value. The first item gives useful alternative expressions and the rest of the properties show that, in many ways, the symmetric generalized absolute value behaves like the ordinary absolute value in Riesz spaces.

\begin{thm}\label{sav}
Let $V$ be a mixed lattice vector space and $x\in V$. Then the following statements hold.
\begin{enumerate}[(a)]
\item
\; $s(x)\,=\,\rupp{x}\uenv \llow{x}\, = \,\lupp{x}\,+\,\llow{x}\, = \,\rlow{x}\,+\,\rupp{x}\, = \,\rlow{x}\uenv \lupp{x}$.
\item
\; $s(\alpha x)=|\alpha|s(x)$ for all $\alpha\in\R$. 
\item
\; $s(x)\sgeq 0$ \; and \; $s(x)\geq 0$. Moreover, \, $s(x)= 0$ \, if and only if \, $x=0$.
\item
\; $x\sgeq 0$ \, if and only if \, $x=s(x)$. In particular,  $s(s(x))=s(x)$.
\item
\;
$s(x+y)\leq s(x)+s(y)$. 
\item
\; $x\lenv y + y\lenv x = x+y-s(x-y)$.
\end{enumerate}
\end{thm}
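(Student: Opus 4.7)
The plan is to observe that each of the six claims follows in a short step from the properties of the asymmetric absolute values $\ul{x}$ and $\lu{x}$ already collected in Theorem \ref{absval}, together with the definition $s(x)=\tfrac12(\ul{x}+\lu{x})$. So the strategy is not to develop new machinery but to unpack $s(x)$ via Theorem \ref{absval}(c), namely $\ul{x}=\lupp{x}+\rlow{x}$ and $\lu{x}=\llow{x}+\rupp{x}$, giving $2s(x)=\lupp{x}+\llow{x}+\rlow{x}+\rupp{x}$, and to combine this with the pairing identities in Theorem \ref{absval}(h).

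For part (a), Theorem \ref{absval}(h) gives the four equal quantities $\rupp{x}\uenv\llow{x}=\lupp{x}+\llow{x}=\rlow{x}+\rupp{x}=\rlow{x}\uenv\lupp{x}$; call their common value $t$. Then $2t=(\lupp{x}+\llow{x})+(\rlow{x}+\rupp{x})=\ul{x}+\lu{x}=2s(x)$, so $s(x)=t$. Part (b) is immediate from Theorem \ref{absval}(l) for $\alpha\ge 0$ and Theorem \ref{absval}(m) for $\alpha<0$, since in each case $\ul{(\alpha x)}+\lu{(\alpha x)}=|\alpha|(\ul{x}+\lu{x})$. Part (c) uses (a): $s(x)=\lupp{x}+\llow{x}$, and both $\lupp{x},\llow{x}\sgeq 0$ by (M4), hence $s(x)\sgeq 0$, which gives $s(x)\ge 0$ by (M7). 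The ''if and only if'' follows from Theorem \ref{absval}(k): $s(x)=0$ forces $\ul{x}=\lu{x}=0$ and hence $x=0$.

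Part (d): if $x\sgeq 0$ then by Theorem \ref{absval}(i) we have $x=\ul{x}=\lu{x}$, so $s(x)=x$; conversely, $x=s(x)$ together with (c) gives $x\sgeq 0$. Since $s(x)\sgeq 0$ always holds, $s(s(x))=s(x)$. Part (e) follows by averaging the two triangle inequalities $\ul{(x+y)}\le\ul{x}+\ul{y}$ and $\lu{(x+y)}\le\lu{x}+\lu{y}$ from Theorem \ref{absval}(e)(f). Finally, part (f) follows by adding the two identities from Theorem \ref{absval}(n): $2(x\lenv y)=x+y-\lu{(x-y)}$ and $2(y\lenv x)=x+y-\ul{(x-y)}$, which together with the definition of $s$ give $x\lenv y+y\lenv x=x+y-s(x-y)$.

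There is no real obstacle here: every item is a one- or two-line consequence of Theorem \ref{absval}, and the only small subtlety is remembering in (c) to pass through the representation $s(x)=\lupp{x}+\llow{x}$ supplied by (a) rather than working directly with $\tfrac12(\ul{x}+\lu{x})$, because the latter has summands that are only guaranteed to be $(\le)$-positive, not $(\sleq)$-positive. Once (a) is in hand, the ordering assertions in (c) and (d) are immediate.
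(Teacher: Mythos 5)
Your parts (a), (b), (d), (e) and (f) match the paper's proof essentially line for line, and your route to $s(x)=0\Rightarrow x=0$ via Theorem \ref{absval}(k) is a valid (slightly cleaner) variant of the paper's direct computation with the upper and lower parts. The genuine problem is in part (c), specifically the claim that $\lupp{x}$ and $\llow{x}$ are $(\sleq)$-positive ``by (M4)''. They are not: applying (M4) to $\lupp{x}=x\uenv 0$ gives $\lupp{x}\sgeq x$ and $\lupp{x}\geq 0$, so the upper and lower parts are only guaranteed to be $(\leq)$-positive. The summands that are $(\sleq)$-positive are the \emph{specific} upper and lower parts $\rupp{x}=0\uenv x$ and $\rlow{x}=0\uenv(-x)$, again by (M4). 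Consequently the two positivity assertions in (c) require two \emph{different} representations from (a): $s(x)=\rupp{x}+\rlow{x}\sgeq 0$ because $\rupp{x},\rlow{x}\sgeq 0$, and $s(x)=\lupp{x}+\llow{x}\geq 0$ because $\lupp{x},\llow{x}\geq 0$. This is exactly what the paper does. Your closing remark shows you sensed that the choice of representation is the crux, but you picked the one that delivers $(\leq)$-positivity and tried to read $(\sleq)$-positivity off it.

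A secondary point: deducing $s(x)\geq 0$ from $s(x)\sgeq 0$ via (M7) is both unnecessary and a slightly too strong appeal, since (M7) requires pre-regularity, whereas the two positivity statements follow separately from (M4) alone, which holds in every mixed lattice group. The fix above removes the need for (M7) entirely and keeps the argument at the level of generality of the theorem's statement.
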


\begin{bproof}
\begin{enumerate}[(a)]
\item 
\; All the equalities apart from the first one were given in Theorem \ref{absval}(h). The first equality follows from
$$
2s(x)=\ul{x}+\lu{x} = \lupp{x}\,+\,\llow{x}\, + \,\rlow{x}\,+\,\rupp{x}\, = 2(\rlow{x}\,+\,\rupp{x}) = 2(\rupp{x}\uenv \llow{x}),
$$
where we used Theorem \ref{absval} (c) and (h). 
\item
\; This follows from Theorem \ref{absval} (l) and (m).
\item 
\; Since $x^{u}\sgeq 0$ and $x^{l}\sgeq 0$ we have $s(x)=x^{u}+x^{l}\sgeq 0$. Also, $\lupp{x}\geq 0$ and $\llow{x}\geq 0$ imply $s(x)=\lupp{x}+\llow{x}\geq 0$. It is clear that $x=0$ implies $s(x)=0$. Assume then that $s(x)=0$. Now $s(x)=\lupp{x}+\llow{x}=0$, \,or \,$\lupp{x}=-\llow{x}$. Hence 
$0\leq\lupp{x}=-\llow{x}\leq 0$, which implies $\lupp{x}=0$. On the other hand, $s(x)=\rupp{x}+\rlow{x}=0$, \,or \, $\rlow{x}=-\rupp{x}$. So, $0\sleq\rlow{x}=-\rupp{x}\sleq 0$, which in turn implies $\rlow{x}=0$. Consequently, $x=\lupp{x}-\rlow{x}=0$.
\item 
\; This follows immediately from (a), (c) and Theorem \ref{absval} (i).
\item
\; Using (a) together with Theorem \ref{absval} (e) and (f) we get
$$
s(x+y)=\lupp{(x+y)}+\llow{(x+y)}\leq \lupp{x}+\lupp{y} +\llow{x}+\llow{y} =s(x)+s(y).
$$
\item
\; This follows by adding the two identities given in Theorem \ref{absval} (n).
\end{enumerate}
\end{bproof}

We can now characterize ideals in terms of the symmetric absolute value.

\begin{thm}\label{ideal_charact2}
Let $V$ be a mixed lattice space and $A$ a subspace of $V$. The following conditions are equivalent. 
\begin{enumerate}[(a)]
\item
$A$ is an ideal.
\item
If\, $s(x)\leq s(y)$ \,and \,$y\in A$ \,then \,$x\in A$.
\end{enumerate}
\end{thm}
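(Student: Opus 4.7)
The plan is to prove both implications by carefully exploiting the two expressions for $s(x)$ given in Theorem \ref{sav}(a), together with the basic identities from Theorem \ref{absval}(b) and the idempotence property from Theorem \ref{sav}(d). The whole argument reduces to moving between $x$ and its parts $\lupp{x}, \rupp{x}, \rlow{x}, \llow{x}$, all of which are sandwiched between $0$ and $s(x)$.

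For the forward direction, assume $A$ is an ideal and suppose $s(x)\leq s(y)$ with $y\in A$. Since $A$ is a mixed lattice subspace and is closed under negation, both $\rupp{y}$ and $\rupp{(-y)}=\rlow{y}$ belong to $A$; hence $s(y)=\rupp{y}+\rlow{y}\in A$ by Theorem \ref{sav}(a). Because $s(y)\geq 0$ and $0\leq s(x)\leq s(y)$, the $(\leq)$-order convexity of $A$ gives $s(x)\in A$. Now use the two different decompositions from Theorem \ref{sav}(a): from $s(x)=\lupp{x}+\llow{x}$ with both summands in $V_p$ we get $0\leq \lupp{x}\leq s(x)$, so $\lupp{x}\in A$; from $s(x)=\rupp{x}+\rlow{x}$ we get $0\leq \rlow{x}\leq s(x)$, so $\rlow{x}\in A$. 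Theorem \ref{absval}(b) then yields $x=\lupp{x}-\rlow{x}\in A$.

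For the reverse direction, assume condition (b); I need to verify that the subspace $A$ is $(\leq)$-order convex and a mixed lattice subspace. For order convexity, take $0\leq x\leq y$ with $y\in A$. Since $x\geq 0$ gives $\lupp{x}=x$ and $\llow{x}=0$, Theorem \ref{sav}(a) yields $s(x)=x$, and similarly $s(y)=y$; thus $s(x)\leq s(y)$ and (b) gives $x\in A$. For the mixed lattice subspace property, by the remark preceding Proposition \ref{regular_oconvex} it suffices to show $\rupp{x}\in A$ whenever $x\in A$. Since $\rupp{x}\sgeq 0$, Theorem \ref{sav}(d) gives $s(\rupp{x})=\rupp{x}$, and from $s(x)=\rupp{x}+\rlow{x}$ with $\rlow{x}\geq 0$ we obtain $s(\rupp{x})=\rupp{x}\leq s(x)$. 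Applying (b) with $y=x\in A$ concludes $\rupp{x}\in A$.

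There is no real obstacle here; the proof is essentially bookkeeping with the identities of Theorem \ref{sav} and Theorem \ref{absval}. The only mildly delicate point is to recognize that one must extract $\lupp{x}$ from one decomposition of $s(x)$ and $\rlow{x}$ from the other in order to reconstitute $x$ via $x=\lupp{x}-\rlow{x}$, rather than trying (vainly) to deduce $\rupp{x}$ and $\llow{x}$ from a single chain of inequalities.
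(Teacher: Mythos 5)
Your forward direction and your argument that $\rupp{x}\in A$ whenever $x\in A$ are both correct and essentially the same as the paper's. The one genuine error is in your order-convexity step: from $x\geq 0$ you conclude $\llow{x}=0$ and hence $s(x)=x$. That is false in general. Theorem \ref{absval}(j) gives $\lupp{x}=x$ and $\rlow{x}=0$ for $x\geq 0$, but $\llow{x}=0$ characterizes $x\sgeq 0$ (Theorem \ref{absval}(i)), and indeed $s(x)=x$ holds precisely when $x\sgeq 0$ (Theorem \ref{sav}(d)). For an element with $x\geq 0$ but not $x\sgeq 0$ one has $s(x)=\rupp{x}+\rlow{x}=\rupp{x}=x+\llow{x}\neq x$; e.g.\ $x=(1,0)$ in the space of Example \ref{nonontrivialideals}. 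You have conflated positivity for the initial order with positivity for the specific order.

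The step is salvageable, and the paper's proof shows how: for $0\leq x\leq y$ one still has $\rlow{x}=\rlow{y}=0$, so $s(x)=\rupp{x}$ and $s(y)=\rupp{y}$, and (M5) gives $\rupp{x}=0\uenv x\leq 0\uenv y=\rupp{y}$, whence $s(x)\leq s(y)$ and condition (b) yields $x\in A$. (The paper routes this through $\rupp{x}$ and a second application of (b) using $s(x)\leq s(s(x))$, but once $s(x)\leq s(y)$ is established correctly the direct application to the pair $x,y$ works.) With that single repair your proof coincides with the paper's; the rest, including the observation that one must extract $\lupp{x}$ and $\rlow{x}$ from the two different decompositions of $s(x)$ in the forward direction, is exactly the paper's bookkeeping.
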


\begin{bproof}
Assume that $(a)$ holds and let $s(x) \leq s(y)$ with $y\in A$. 
Then $s(y)\in A$, and the inequalities  $0\leq \rlow{x}\leq s(x)\leq s(y)$ and $0\leq \lupp{x}\leq s(x)\leq s(y)$ imply that $\lupp{x},\rlow{x}\in A$ and so $\lupp{x}-\rlow{x}=x\in A$.

Conversely, assume that $(b)$ holds and let $y\in A$. Then by Theorem \ref{sav}  
$0\sleq \rupp{y}= s(\rupp{y})\leq s(y)$.
It follows that $\rupp{y}\in A$ and so $A$ is a mixed lattice subspace. 
Next, assume that $0\leq x\leq y$ with $y\in A$. Then  
$\rlow{y}=\rlow{x}=0$ and so $s(y)= \rupp{y}$ and $s(x)= \rupp{x}$. Now the above assumption implies that $\rupp{x}=s(\rupp{x})\leq \rupp{y}=s(\rupp{y})\leq s(y)$. It follows that $\rupp{x}=s(x)\in A$ and since $s(x)\leq s(s(x))$ by Theorem \ref{sav}(d), we infer that $x\in A$ and so $A$ is $(\leq)$-order convex, and hence an ideal.  
\end{bproof}

Next we will show that the right disjoint complement can also be given in terms of the symmetric absolute value, and for Riesz subspaces, the usual Riesz space definition of the disjoint complement can thus be viewed as a special case of the next result.

\begin{thm}\label{rdc_in_terms_of_sav}
If $A$ is a mixed lattice subspace then the right disjoint complement is given by 
$A^{\perp}=\{x\in V: s(z)\lenv s(x)=0 \; \textrm{ for all }\, z\in A\}$.
\end{thm}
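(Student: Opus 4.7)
The plan is to prove the two inclusions separately; denote the right-hand side by $B$.

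For $A^{\perp} \subseteq B$, I would take $x \in A^{\perp}$ and show $s(z) \lenv s(x) = 0$ for each $z \in A$. The key is that both $A$ and $A^{\perp}$ (the latter being an ideal by Theorem \ref{right_disjoint_ideal}) are mixed lattice subspaces, so $\rupp{(\cdot)}$ and $\rlow{(\cdot)}$ keep us inside each subspace. Combined with $s(x) = \rupp{x} + \rlow{x}$ from Theorem \ref{sav}(a) and the $(\sleq)$-positivity of $s(x)$, this places $s(x)$ in $(A^{\perp})_{sp}$, which coincides with $\aspp$ by the identification established inside the proof of Theorem \ref{right_disjoint_ideal}. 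The same reasoning applied to $z$ gives $s(z) \in A_{sp}$, so the defining property of $\aspp$ delivers $s(z) \lenv s(x) = 0$.

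For the reverse inclusion $B \subseteq A^{\perp}$, I would take $x \in B$ and first show $s(x) \in \aspp$. For every $z \in A_{sp}$, Theorem \ref{sav}(d) gives $s(z) = z$, so the hypothesis collapses to $z \lenv s(x) = 0$, which is precisely the condition defining $\aspp$; hence $s(x) \in \aspp \subseteq A^{\perp}$. To upgrade $s(x) \in A^{\perp}$ to $x \in A^{\perp}$, I would invoke Theorem \ref{ideal_charact2} applied to the ideal $A^{\perp}$ with $y = s(x)$, using $s(s(x)) = s(x)$ (again by Theorem \ref{sav}(d)) to verify the hypothesis $s(x) \leq s(y)$ trivially.

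There is no serious obstacle; the argument is essentially bookkeeping once the central point is in place, namely that $s(\cdot)$ is the correct bridge between the two orderings. Concretely, $s$ is always $(\sleq)$-positive, fixes every $(\sleq)$-positive element, and preserves mixed lattice subspaces, and these three features are exactly what is needed to make the symmetric condition defining $B$ collapse onto the asymmetric condition defining $\aspp$, after which Theorem \ref{ideal_charact2} lifts the conclusion from $s(x)$ to $x$.
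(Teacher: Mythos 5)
Your proposal is correct and follows essentially the same route as the paper: both arguments hinge on the identification $A_{sp}=\{s(z):z\in A\}$ (so the symmetric condition collapses to $z\lenv s(x)=0$ for $z\in A_{sp}$), the fact that $(A^{\perp})_{sp}=\aspp$, and the lift from $s(x)\in A^{\perp}$ to $x\in A^{\perp}$ via the ideal property of $A^{\perp}$. The only difference is cosmetic: the paper additionally verifies that the right-hand side is itself an ideal (using Lemma \ref{disjoint_sum_lemma} and subadditivity of $s$) before proving the two inclusions, a step your direct argument does not need.
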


\begin{bproof}
Let $X=\{x\in V: s(z)\lenv s(x)=0 \; \textrm{ for all }\, z\in A\}$. We first note that $A_{sp}=\{s(z):z\in A\}$ and so $X=\{x\in V: z\lenv s(x)=0 \; \textrm{ for all }\, z\in A_{sp}\}$. Let $x,y\in X$ and $z\in A_{sp}$. Since $s(x)\sgeq 0$ and $s(y)\sgeq 0$ we have 
$$
0\leq z\lenv s(x+y) \leq z\lenv (s(x)+s(y))\leq z\lenv s(x)+z\lenv s(y)=0,
$$
and so $z\lenv s(x+y)=0$ (here we used Theorem \ref{sav} and Lemma \ref{disjoint_sum_lemma}). This shows that $X$ is a subspace. To show that $X$ is an ideal, we note that if $s(x)\leq s(y)$ and $y\in X$ then $0\leq z\lenv s(x) \leq z\lenv s(y)=0$, and so $x\in X$ and by Theorem \ref{ideal_charact2} $X$ is an ideal.

Clearly, $A^{\perp}\subseteq X$. If $x\in X$ then $s(x)\in A^{\perp}_{sp}$, and it follows from  
$0\leq \rupp{x}\leq s(x)$ and $0\leq \llow{x}\leq s(x)$ that $x=\rupp{x}-\llow{x}\in A^{\perp}$. Hence $A^{\perp}=X$.
\end{bproof}

Theorem \ref{rdc_in_terms_of_sav} thus gives an alternative (but equivalent) definition of the right disjoint complement by means of the symmetric absolute value. The reason this works is implied by Theorem \ref{ideal_charact2}, for if $A$ is an ideal then $x\in A$ if and only if $s(x)\in A$. However, if $A$ is a specific ideal, then $x\in A$ implies $s(x)\in A$, but not conversely in general. This is the fundamental reason why we cannot similarly characterize the left disjoint complement $^{\perp}A$ in terms of the symmetric absolute value.

In the next section we consider the situation where $A$ is a band such that $A= ({}^{\perp}A)^{\perp}$. We will now give sufficient conditions for this to hold. 
The next couple of results have their counterparts in the theory of Riesz spaces, and the methods used in their proofs are also similar. First we recall some terminology. A mixed lattice space $V$ is called $(\leq)$-\emph{Archimedean} if the condition $nx\leq y$ for all $n\in \N$ implies $x\leq 0$. Similarly, $V$ is called $(\sleq)$-\emph{Archimedean} if the condition $nx\sleq y$ for all $n\in \N$ implies $x\sleq 0$. It is easy to see that if $V$ is $(\leq)$-Archimedean then it is also $(\sleq)$-Archimedean, but the converse is not true as the next example shows.

\begin{example2}
Let $V=(\R^2,\leq,\sleq)$, where $\leq$ is the lexicographic ordering defined as
$$
(x_1,x_2)\leq (y_1,y_2) \quad \iff \quad  ( \; x_1< y_1 \quad \textrm{or} \quad x_1=y_1 \quad \textrm{and} \quad x_2\leq y_2 \;),
$$
and $\sleq$ is the usual coordinatewise ordering $(x_1,x_2)\sleq (y_1,y_2)$ if $x_1\leq y_1$ and $x_2\leq y_2$. Then $V$ is a mixed lattice space which is $(\sleq)$-Archimedean but not $(\leq)$-Archimedean.
\end{example2}

Next we will show that in $(\sleq)$-Archimedean mixed lattice spaces every quasi-band $A$ has the property that $A_{sp}= (({}^{\perp}A)^{\perp})_{sp}$. 
First we consider the following order-denseness property of quasi-ideals with respect to their second right disjoint complements.

\begin{prop}\label{orderdense_ideal}
If $A$ is a quasi-ideal and $x\in ({}^{\perp}A)^{\perp}$ is a non-zero element such that $x\sgeq 0$, then there exists a non-zero element $y\in A$ such that $0\sleq y\sleq x$.
\end{prop}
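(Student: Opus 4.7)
The plan is to proceed by contradiction. Suppose $x \in ({}^{\perp}A)^{\perp}$ satisfies $x \sgeq 0$ and $x \neq 0$, but that no non-zero $y \in A$ satisfies $0 \sleq y \sleq x$. I will derive $x \in {}^{\perp}A$; combined with $x \in ({}^{\perp}A)^{\perp}$ this yields a contradiction, because ${}^{\perp}A$ is a regular specific band by Theorem \ref{left_disjoint_comp_is_band}, and so applying part (d) of Theorem \ref{properties_of_complements} with $A$ replaced by ${}^{\perp}A$ gives ${}^{\perp}A \cap ({}^{\perp}A)^{\perp} = \{0\}$, forcing $x = 0$.

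To show $x \in {}^{\perp}A$, it suffices (by Definition \ref{ldc}) to verify $x \lenv z = 0$ for every $z \in A_p$. I would first handle the case $z \in A_{sp}$. Set $w = x \lenv z$. Since $x \sgeq 0$ and $z \sgeq 0$, rule (M8b) gives $0 \sleq w$, and (M4) gives $w \sleq x$ and $w \leq z$. Because $A$ is a quasi-ideal, i.e.\ mixed-order convex, the condition $0 \sleq w \leq z$ with $z \in A$ forces $w \in A$. Then $w \in A$ with $0 \sleq w \sleq x$, so the contradiction hypothesis gives $w = 0$.

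To extend from $A_{sp}$ to $A_p$, let $z \in A_p$. Since $A$ is a mixed lattice subspace, $\rupp{z} = 0 \uenv z$ lies in $A_{sp}$, and $z \leq \rupp{z}$. Applying (M5) together with the previous step,
\begin{equation*}
0 \;\leq\; x \lenv z \;\leq\; x \lenv \rupp{z} \;=\; 0,
\end{equation*}
so $x \lenv z = 0$ as desired. Thus $x \in \apld \subseteq {}^{\perp}A$, completing the contradiction.

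The main point to watch is that mixed-order convexity of $A$ is precisely the hypothesis needed to place $w = x \lenv z$ inside $A$: the weaker property of being a specific ideal would not suffice, and being an ideal would be overkill. The asymmetry of the mixed envelopes is also crucial, because $x \sgeq 0$ and $z \geq 0$ alone do not guarantee $x \lenv z \sgeq 0$; one really needs $z \sgeq 0$ to invoke (M8b), which is why the argument is carried out first on $A_{sp}$ and then promoted to $A_p$ via the specific upper part $\rupp{z}$.
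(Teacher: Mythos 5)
Your proof is correct and is essentially the paper's own argument in contrapositive form: the paper directly uses ${}^{\perp}A\cap({}^{\perp}A)^{\perp}=\{0\}$ to find $w\in A_{sp}$ with $x\lenv w\neq 0$ and takes $y=x\lenv w$, placed in $A$ by mixed-order convexity exactly as you do. Your explicit reduction from $A_p$ to $A_{sp}$ via $\rupp{z}$ is a detail the paper leaves implicit, but the substance is identical.
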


\begin{bproof}
Let $0\sleq x\in ({}^{\perp}A)^{\perp}$ be a non-zero element. Then $x\notin {}^{\perp}A$, and there exists an element $0\sleq w\in A$ such that $x\lenv w \neq 0$. Since $A$ is a quasi-ideal, the inequality $0\sleq x\lenv w \leq w$ implies that $x\lenv w \in A$. The element $y=x\lenv w$ then satisfies $0\sleq y\sleq x$, as required. 
\end{bproof}

\begin{remark2}
The above proposition holds also if $A=\{0\}$, because then ${}^{\perp}A=V_{sp}-V_{sp}$ and $({}^{\perp}A)^{\perp}=\{0\}$,  so there are no non-zero elements in $({}^{\perp}A)^{\perp}$ which makes the proposition vacuously true.
\end{remark2}

In $(\sleq)$-Archimedean mixed lattice spaces the order-denseness property of the preceding proposition has the following consequence.

\blue{
\begin{thm}\label{orderdense_archim2}
Let $V$ be a 
$(\sleq)$-Archimedean mixed lattice space and $A$ a quasi-ideal in $V$. If $M_u=\{v\in A: 0\sleq v\sleq u\}$ then $u=\spsup M_u$ for every $u\in (({}^{\perp}A)^{\perp})_{sp}$.   
\end{thm}

\begin{bproof}
Let $0\sleq u\in ({}^{\perp}A)^{\perp}$ and $M_u=\{v\in A: 0\sleq v\sleq u\}$.  
Clearly, $u$ is a specific upper bound of $M_u$. Assume that $u\neq \spsup M_u$. Then there exists some  
specific upper bound $w_0$ of $M_u$ such that $u\sleq w_0$ does not hold. 
If we put $w=u\lenv w_0$ then $0\sleq w\sleq u$ holds, and $w\in ({}^{\perp}A)^{\perp}$, since $({}^{\perp}A)^{\perp}$ is an ideal. Moreover, $0\sleq u-w \in ({}^{\perp}A)^{\perp}$ and so we can apply Proposition \ref{orderdense_ideal} to find a non-zero element $z\in A$ such that $0\sleq z\sleq u-w\sleq u$. 
In particular, 
we have $z\sleq u$ and $z\sleq w_0$, and consequently, $z\sleq w$. Hence 
$$
0\sleq 2z=z+z\sleq (u-w)+w=u,
$$ 
and clearly $2z\in A$, so in fact we have $2z\in M_u$. Hence,  
we can repeat the same argument for the element $2z$. That is, 
$$
0\sleq 3z=z+2z\sleq (u-w)+w=u,
$$ 
and so $3z\in M_u$. Continuing this way we find that $nz\in M_u$ for all $n$, and so   
$0\sleq nz\sleq u$ for all $n$. But $0\sleq nz\neq 0$, which is impossible since $V$ is $(\sleq)$-Archimedean.  
Hence, $u=\spsup M_u$, and the proof is complete.
\end{bproof}

The following result is now a rather immediate consequence of the 
preceding theorem.

\begin{cor}\label{archim_2_complement}
If $V$ is a 
$(\sleq)$-Archimedean mixed lattice space and $A$ is a quasi-band  
in $V$, then $A_{sp}= (({}^{\perp}A)^{\perp})_{sp}$. 
\end{cor}
}

\section{Direct sum decompositions and projections}
\label{sec:s6}

The sum of subspaces $A$ and $B$ of a mixed lattice space $V$ is called a \emph{direct sum} if $A\cap B=\{0\}$, and this is denoted by $A\oplus B$.
If $V$ has a direct sum decomposition $V=A\oplus B$ then each element $x\in V$ has a unique representation $x=x_1+x_2$, where $x_1\in A$ and $x_2\in B$.  
The elements $x_1$ and $x_2$ are called \emph{components} of $x$ in $A$ and $B$, respectively.
A vector space can usually be written as a direct sum of subspaces in many ways, but we are mainly concerned with direct sum decompositions that are compatible with the mixed lattice structure in the sense that the order structure of $V$ 
is determined by its direct sum components.

In this section we give some results concerning direct sum decompositions of a 
mixed lattice space in terms of ideals and specific ideals. First we show that if a mixed lattice space $V$ can be written as a direct sum of a regular specific ideal $A$ and an ideal $B$ then $A$ and $B$ are necessarily the disjoint complements of each other. Moreover, $A$  
has stronger band properties than those implied by Theorem \ref{left_disjoint_comp_is_band}.

\begin{thm}\label{direct_sum}
If $A$ is a regular specific ideal and $B$ is an ideal  
such that $V=A\oplus B$ then $A$ is a regular specific band and $B$ is a \blue{weak band}  
such that $A= {}^{\perp}B = {}^{\perp}(A^{\perp})$ and $B= {A}^{\perp} =({}^{\perp}B)^{\perp}$. Moreover, $V_{sp}=A_{sp}+B_{sp}$.
\end{thm}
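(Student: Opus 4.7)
The plan is to establish four claims in order: $V_{sp}=A_{sp}+B_{sp}$, the disjoint complement identities $A={}^{\perp}B$ and $B=A^{\perp}$, the band properties, and the remaining equalities by substitution. The first identity is an immediate application of Theorem \ref{specific_direct_sum1}(b), since $B$ is a quasi-ideal in both branches of the hypothesis and $A+B=V$.

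For $A={}^{\perp}B$, I would first verify that each $x\in A_{sp}$ lies in the defining cone of ${}^{\perp}B$, i.e.\ $x\lenv z=0$ for all $z\in B_p$. When $z\in B_{sp}$, set $w=x\lenv z$; then (M8b) together with (M4) give $0\sleq w\sleq x$ and $0\sleq w\leq z$, so $(\sleq)$-order convexity of $A$ and mixed-order convexity of $B$ (valid for both ideals and quasi-ideals) force $w\in A\cap B=\{0\}$. For a general $z\in B_p$, pass to $\rupp{z}\in B_{sp}$ via (M5) to get $0\leq x\lenv z\leq x\lenv\rupp{z}=0$. Regularity of $A$ then upgrades this to $A\subseteq{}^{\perp}B$. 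For the reverse inclusion it suffices to show the defining cone is contained in $A$: writing $x=x_1+x_2$ with $x_1\in A_{sp}$, $x_2\in B_{sp}$ by means of the first step, one has $x\lenv x_2=0$ and $x_2\sleq x$, so (M6) forces $x_2=x\lenv x_2=0$, hence $x\in A$. Minimality of ${}^{\perp}B$ among specific ideals containing this cone then gives ${}^{\perp}B\subseteq A$.

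The proof of $B=A^{\perp}$ is symmetric. The inclusion $B_{sp}\subseteq A^{\perp}$ follows from the same manipulation applied to $w=z\lenv x$; no reduction step is needed because the defining cone for $A^{\perp}$ involves only $z\in A_{sp}$. For the reverse direction, decompose any $x$ in the defining cone as $x_1+x_2$; then $x_1\lenv x=0$, and the pair $x_1\sleq x_1$ with $x_1\leq x$ (the latter by (M7)) forces $x_1\leq 0$, while $x_1\sgeq 0$ gives $x_1\geq 0$, hence $x_1=0$ and $x\in B$. The inclusion $B\subseteq A^{\perp}$ then follows by $(\leq)$-order convexity of $A^{\perp}$ applied to $0\leq x\leq\rupp{x}\in B_{sp}$ in the ideal case, or by $B=B_{sp}-B_{sp}$ in the regular quasi-ideal case. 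For $A^{\perp}\subseteq B$: in the ideal case, minimality of $A^{\perp}$; in the regular quasi-ideal case, decompose $x\in A^{\perp}$ as $x_1+x_2$ with $x_2\in B\subseteq A^{\perp}$, and invoke $A\cap A^{\perp}=\{0\}$ from Theorem \ref{properties_of_complements}(d) to conclude $x_1=0$.

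Finally, Theorem \ref{left_disjoint_comp_is_band} shows $A={}^{\perp}B$ is a regular specific band, Theorem \ref{right_disjoint_ideal} shows $B=A^{\perp}$ is a band (which is a regular quasi-band in the quasi-ideal case via Corollary \ref{band_is_sp_band}), and the identities $A={}^{\perp}(A^{\perp})$ and $B=({}^{\perp}B)^{\perp}$ are pure substitutions of the relations already proved. The main obstacle I anticipate is the asymmetry in showing $A_{sp}\subseteq{}^{\perp}B$: the defining condition ranges over the $(\leq)$-positive cone $B_p$, whereas $(\sleq)$-order convexity of $A$ naturally controls $(\sleq)$-positive elements; the passage through $\rupp{z}\in B_{sp}$, available because $B$ is a mixed lattice subspace, is the essential bridge across this gap.
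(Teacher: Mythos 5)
Your proof is correct, and it follows the same overall architecture as the paper's (first show the positive cones of $A$ and $B$ are mutually disjoint, then use the direct sum to force the reverse inclusions, then quote Theorems \ref{left_disjoint_comp_is_band} and \ref{right_disjoint_ideal} for the band properties), but both main steps are executed differently. For the disjointness, the paper forms $\rupp{(u\lenv v)}$ and places it in $A\cap B$ using Theorem \ref{new_ineq} together with the mixed lattice subspace property of $B$ (with a separate regularity argument in the quasi-ideal branch); you instead put $w=x\lenv z$ directly into $A\cap B$ via $(\sleq)$-order convexity of $A$ and mixed-order convexity of $B$ when $z\in B_{sp}$, and handle general $z\in B_p$ by the monotone reduction $0\leq x\lenv z\leq x\lenv\rupp{z}$. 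Your route is a bit cleaner and treats the two branches uniformly, since mixed-order convexity is available for ideals and quasi-ideals alike. For the reverse inclusions, the paper decomposes arbitrary elements of ${}^{\perp}B$ and then of $({}^{\perp}B)^{\perp}$, invoking $B\cap{}^{\perp}B=\{0\}$ and the second-complement inclusion from Theorem \ref{properties_of_complements}; you work at the level of the generating cones, using $V_{sp}=A_{sp}+B_{sp}$ (which you correctly establish first, so there is no circularity) together with (M6) and the maximum-defining property of $\lenv$ to show each cone element already lies in $A$ (resp.\ $B$), and then appeal to minimality of the generated specific ideal (resp.\ ideal). This buys a more direct proof of $A^{\perp}\subseteq B$ in the ideal case, at the cost of a separate argument in the quasi-ideal branch, where you correctly fall back on $A\cap A^{\perp}=\{0\}$. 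The only imprecision is cosmetic: the regularity of $B$ in the quasi-band conclusion comes from the hypothesis that $B$ is a regular quasi-ideal, not from Corollary \ref{band_is_sp_band}, which only supplies the quasi-band property.
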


\begin{bproof}
Assume that $V=A\oplus B$, that is, $A\cap B=\{0\}$. Let $u\in A$ and $v\in B$ such that $u\sgeq 0$ and $v\geq 0$. Then, since $B$ is an ideal, 
it follows from  $0\leq u\lenv v \leq v$ that $u\lenv v\in B$, and consequently $\rupp{(u\lenv v)}\in B$.  
On the other hand,  
$u\lenv v\sleq u$ and by Theorem \ref{new_ineq} we have
$$
0\sleq \rupp{(u\lenv v)} = 0\uenv (u\lenv v) \sleq 0 \uenv u =\rupp{u}=u,
$$
and so $\rupp{(u\lenv v)}\in A$, since $A$ is a specific ideal. Hence, we have $\rupp{(u\lenv v)}\in A\cap B$, and so $\rupp{(u\lenv v)}=0$. Since $u\lenv v\geq 0$, this implies that $u\lenv v =0$. Hence, $A\subseteq {}^{\perp}B$ and $B\subseteq {A}^{\perp}$.

To prove the reverse inclusions, let $x\in {}^{\perp}B$. Since $V=A\oplus B$, the element $x$ has a unique decomposition $x=x_1+x_2$ with $x_1\in A$ and $x_2\in B$. But we showed above that $A\subseteq {}^{\perp}B$, and so $x_1\in {}^{\perp}B$. 
Since ${}^{\perp}B$ is a subspace, we have $x-x_1=x_2\in{}^{\perp}B$. Thus  
$x_2=0$ (by Theorem \ref{properties_of_complements}) and $x=x_1\in A$. Hence, ${}^{\perp}B\subseteq A$ and so $A= {}^{\perp}B$. 
From this it follows that $A^{\perp}= ({}^{\perp}B)^{\perp}$ and  
$V=A\oplus B={}^{\perp}B \oplus B$. Thus, every $x\in A^{\perp}=({}^{\perp}B)^{\perp}$ has a unique decomposition $x=x_1+x_2$ with $x_1\in {}^{\perp}B$ and $x_2\in B\subseteq ({}^{\perp}B)^{\perp}$, and so by Theorem \ref{properties_of_complements} we have $x_1=x-x_2 \in ({}^{\perp}B)^{\perp}$. But then $x_1\in {}^{\perp}B\cap ({}^{\perp}B)^{\perp}= \{0\}$,  
so we deduce that $x=x_2\in B$, and therefore $A^{\perp} \subseteq B$, and so $A^{\perp}=B$. 
Hence, we have $A= {}^{\perp}B = {}^{\perp}(A^{\perp})$ and $B= A^{\perp}=({}^{\perp}B)^{\perp}$.

\blue{
The equality $V_{sp}= A_{sp}+B_{sp}$ holds by  
Theorem \ref{specific_direct_sum1}(b). 
By Theorem \ref{right_disjoint_ideal}, $B$ is a weak band,  
and by Theorem \ref{left_disjoint_comp_is_band}, $A$ is a weak specific band, but we must show that $A$ is a specific band. 
For this, let $E$ be a subset of $A$ such that $w=\spsup E$ exists in $V$. Then for any $x\in E$ we have $z=w-x\sgeq 0$, and so $z=z_1+z_2$ with $z_1\in A_{sp}$ and $z_2\in B_{sp}$. On the other hand, we have $w=w_1+w_2$ with $w_1\in A$ and $w_2\in B$, and therefore $z=(w_1-x)+w_2$ where $w_1-x\in A$. Since the representation of $z$ is unique, we have $z_1=w_1 -x \in A_{sp}$ and $z_2=w_2 \in B_{sp}$. Hence,  
$x\sleq w_1 \sleq w_1+w_2 = w$. Since this holds for any $x\in E$,  
it follows that $w_2 =0$ and $w=w_1\in A$, and so $A$ is a specific band.} 
\end{bproof}

For a regular mixed lattice space we have the following result.

\begin{thm}\label{specific_direct_sum2}
Let $A$ be a regular specific 
ideal and $B$ an ideal 
such that $V=A\oplus B$.  
Then $V$ is regular if and only if $A$ and $B$ are both regular. 
\end{thm}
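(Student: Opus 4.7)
The proof plan splits into two implications. Since $A$ is already assumed regular as part of the hypothesis, the content of the equivalence is really: given the standing hypotheses of the theorem, $V$ is regular if and only if $B$ is regular. Both directions should follow from elementary manipulations, with the key input being the identity $V_{sp}=A_{sp}+B_{sp}$ that comes for free from Theorem \ref{direct_sum}.

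For the ``if'' direction, I would simply unfold definitions. Given any $x\in V$, the direct sum decomposition provides a unique $x=x_1+x_2$ with $x_1\in A$ and $x_2\in B$. Regularity of $A$ lets me write $x_1=u_1-v_1$ with $u_1,v_1\in A_{sp}$, and regularity of $B$ lets me write $x_2=u_2-v_2$ with $u_2,v_2\in B_{sp}$. Then
\[
x=(u_1+u_2)-(v_1+v_2),
\]
and since $V_{sp}$ is closed under addition (it is a cone, in fact a mixed lattice cone by quasi-regularity), both summands lie in $V_{sp}$, so $V$ is regular.

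For the ``only if'' direction, suppose $V=V_{sp}-V_{sp}$ and take any $x\in B$. Write $x=u-v$ with $u,v\in V_{sp}$. Invoking Theorem \ref{direct_sum}, $V_{sp}=A_{sp}+B_{sp}$, so I can write $u=a_1+b_1$ and $v=a_2+b_2$ with $a_i\in A_{sp}$ and $b_i\in B_{sp}$. Then
\[
x=(a_1-a_2)+(b_1-b_2),
\]
where $a_1-a_2\in A$ and $b_1-b_2\in B$. Uniqueness of the direct sum decomposition of $x\in B$ forces $a_1-a_2=0$ and $x=b_1-b_2$, exhibiting $x$ as a difference of two elements of $B_{sp}$. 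Hence $B=B_{sp}-B_{sp}$ is regular.

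There is no serious obstacle here: the proof is almost entirely bookkeeping once Theorem \ref{direct_sum} is in hand. The only subtlety worth noting is that the proof of the ``only if'' direction genuinely uses the uniqueness of the decomposition $V=A\oplus B$ (without it, one could only conclude that the $A$-part of $x\in B$ is some element of $A$, not zero), and it uses Theorem \ref{direct_sum} in an essential way --- the decomposition $V_{sp}=A_{sp}+B_{sp}$ is what lets us split a specifically positive element into $A$- and $B$-specifically positive pieces, which is exactly the transfer of specific positivity from $V$ down to $B$ that regularity of $B$ requires.
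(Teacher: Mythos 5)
Your proof is correct and follows essentially the same route as the paper: both directions reduce to the identity $V_{sp}=A_{sp}+B_{sp}$ from Theorem \ref{direct_sum} combined with uniqueness of the direct-sum decomposition. The only cosmetic difference is that the paper uses the equivalent characterization of regularity via a specific majorant ($x\sleq u$ with $u\in V_{sp}$) and shows $x\sleq u_2\in B_{sp}$, whereas you split $x=u-v$ directly; the underlying mechanism is identical.
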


\begin{bproof}
If $A$ and $B$ are both regular then clearly $V$ is also regular. If $V$ is regular and $x\in B$ then $x\sleq u$ for some $u\in V_{sp}$. Now 
by the preceding theorem $u=u_1+u_2$ with $u_1\in A_{sp}$ and $u_2\in B_{sp}$. The inequality $x\sleq u_1+u_2$ is equivalent to $0\sleq u_1+(u_2-x)$, and, again by Theorem \ref{direct_sum}, we have $u_2-x\in B_{sp}$.  
This implies that $B$ is regular. 
\end{bproof}


If there is a weak band $A$ such that $V=\ldc{A}\oplus A$ then the components of specifically positive elements in $A$ and $\ldc{A}$ are given by the next theorem. 

\begin{thm}\label{sp_components}
Let $A$ be a weak band such that $V=\ldc{A} \oplus A$. Then for every $x\in V_{sp}$ the elements
$$
x_1=\spsup\{v\in A: 0\sleq v\sleq x\} \quad \textrm{and} \quad x_2=\spsup\{v\in \ldc{A}: 0\sleq v\sleq x\}
$$
exist, and $x_1$ and $x_2$ are the components of $x$ in $A$ and $\ldc{A}$, respectively.
\end{thm}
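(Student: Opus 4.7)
The plan is to reduce the claim to the direct sum decomposition of $V_{sp}$ provided by Theorem \ref{direct_sum}. Since $A$ is a band and, by Theorem \ref{left_disjoint_comp_is_band}, $\ldc{A}$ is a regular specific band, the hypotheses of Theorem \ref{direct_sum} apply to the pair $(\ldc{A}, A)$. This yields $V_{sp} = (\ldc{A})_{sp} + A_{sp}$, so every $x \in V_{sp}$ admits a decomposition $x = w_1 + w_2$ with $w_1 \in (\ldc{A})_{sp}$ and $w_2 \in A_{sp}$. Because $V = \ldc{A} \oplus A$, this decomposition is unique and agrees with the direct sum decomposition of $x$ in $\ldc{A} \oplus A$.

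I would then verify that $w_2 = x_1$. Set $M = \{v \in A : 0 \sleq v \sleq x\}$. Since $w_1 \sgeq 0$, we have $w_2 = x - w_1 \sleq x$, and $w_2 \sgeq 0$, so $w_2 \in M$. For any $v \in M$, the difference $x - v$ lies in $V_{sp}$ (because $v \sleq x$), and the identity $x - v = w_1 + (w_2 - v)$ is necessarily the unique decomposition of $x-v$ in $\ldc{A} \oplus A$. Applying Theorem \ref{direct_sum} to $x - v$ forces both of its direct sum components to be specifically positive, and in particular $w_2 - v \sgeq 0$, i.e.\ $v \sleq w_2$. Hence $w_2$ is the specifically largest element of $M$; the specific supremum therefore exists and equals $w_2$, so $x_1 = \spsup M = w_2$.

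The equality $w_1 = x_2$ follows by the symmetric argument, interchanging the roles of $A$ and $\ldc{A}$ (the condition $0\sleq v\sleq x$ with $v\in\ldc{A}$ forces, by the same uniqueness argument applied to $x-v$, the estimate $v\sleq w_1$, and $w_1$ itself belongs to that set). I do not foresee any real obstacle here: the existence of the two specific suprema and their identification with the direct sum components is essentially a restatement of the uniqueness of the decomposition $V_{sp} = (\ldc{A})_{sp} + A_{sp}$, and no further structural features of bands are needed beyond what Theorem \ref{direct_sum} already supplies.
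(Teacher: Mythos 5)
Your proposal is correct and follows essentially the same route as the paper: both decompose $x$ via the direct sum, observe that the $A$-component lies in $M=\{v\in A:0\sleq v\sleq x\}$, and use Theorem \ref{direct_sum} applied to $x-v\sgeq 0$ together with uniqueness of the decomposition to conclude that the $A$-component is a specific upper bound of $M$. The only cosmetic difference is that you first state $V_{sp}=(\ldc{A})_{sp}+A_{sp}$ explicitly before invoking uniqueness, whereas the paper applies the same fact directly to $x-v$.
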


\begin{bproof}
Let $x\in V_{sp}$ have the decomposition $x=x_1+x_2$ with $x_1\in A$ and $x_2\in \ldc{A}$. Let $M=\{v\in A: 0\sleq v\sleq x\}$. If $v\in M$ then the element $x-v\sgeq 0$ has the unique decomposition $x-v=(x_1-v)+x_2$ where $x_1-v\in A_{sp}$ and $x_2\in (\ldc{A})_{sp}$, by Theorem \ref{direct_sum}. Thus, $v\sleq x_1$ and so $x_1$ is a $(\sleq)$-upper bound of $M$. But we also have $0\sleq x_1=x-x_2\sleq x$, and so $x_1\in M$. This shows that $x_1=\spsup M$. The formula for $x_2$ is proved similarly.
\end{bproof}

In accordance with the Riesz space terminology, the components given by the above theorem are called \emph{specific projections} of $x$ on $A$ and  $\ldc{A}$. We will return to the discussion of projection elements later.

In the above theorems we considered a direct sum $V=A\oplus B$ such that $V_{sp}=A_{sp}+B_{sp}$. 
If also $V_{p}=A_{p}+B_{p}$ holds then we obtain stronger results, and this motivates the following definition.

\begin{defn}
Let $A$ and $B$ be subspaces of a mixed lattice space $V$ such that $V=A\oplus B$. This 
direct sum is called a \emph{mixed-order direct sum} if $V_{sp}=A_{sp}+B_{sp}$ and $V_p=A_p+B_p$. 
\end{defn}

Mixed-order direct sums can be characterized as follows.

\begin{thm}\label{positive_decomp2}
Let $A$ and $B$ be subspaces of $V$ such that $V=A\oplus B$. Then the following are equivalent. 
\begin{enumerate}[(a)]
\item
$V=A\oplus B$ is a mixed-order direct sum 
\item
$A$ is a regular band and $B$ is a band  such that $A= {}^{\perp}B$ and $B= {A}^{\perp}$. 
\end{enumerate}
\end{thm}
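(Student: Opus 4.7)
The plan is to prove each implication separately; (b)$\Rightarrow$(a) is essentially a bookkeeping exercise, while (a)$\Rightarrow$(b) is the substantial one. For (b)$\Rightarrow$(a), I observe that $A$ is a regular quasi-ideal (being a regular band) and $B$ is a quasi-ideal (being a band, hence an ideal). Theorem \ref{specific_direct_sum1}(a) applied to the two quasi-ideals $A$ and $B$ gives $V_{sp}=A_{sp}+B_{sp}$, and Theorem \ref{sum_of_ideals2} applied with $A$ a regular quasi-ideal and $B$ an ideal gives $V_p=A_p+B_p$.

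For (a)$\Rightarrow$(b), I would first show that both $A$ and $B$ are ideals. For $(\leq)$-order convexity of $A$, given $0\leq y\leq x\in A$ I decompose $y=a_1+b_1$ and $x-y=a_2+b_2$ with $a_i\in A_p$ and $b_i\in B_p$ using $V_p=A_p+B_p$; then $x=(a_1+a_2)+(b_1+b_2)$ with $B$-component $b_1+b_2$, so uniqueness and $x\in A$ force $b_1+b_2=0$, and positivity then yields $b_1=0$ and $y=a_1\in A$. By the remark after Definition \ref{def_proper_sp_ideal}, to show $A$ is a mixed lattice subspace it suffices to verify $\rupp{x}\in A$ for each $x\in A$; I would decompose $\rupp{x}=u+v$ via $V_{sp}=A_{sp}+B_{sp}$ (with $u\in A_{sp}$, $v\in B_{sp}$) and $\llow{x}=p+q$ via $V_p=A_p+B_p$ (with $p\in A_p$, $q\in B_p$). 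Uniqueness in $x=\rupp{x}-\llow{x}=(u-p)+(v-q)$ forces $v=q$, whence $v\sleq\rupp{x}$ and $v\leq\llow{x}$, so by (M5) and Theorem \ref{absval}(g), $v=v\lenv v\leq\rupp{x}\lenv\llow{x}=0$; combined with $v\sgeq 0$ this gives $v=0$ and $\rupp{x}=u\in A$. The argument for $B$ is symmetric, so both $A$ and $B$ are ideals.

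With $A$ and $B$ both ideals, the main obstacle is establishing the regularity of $A$. Applying the mixed lattice subspace argument also to $-x\in A$ yields $\rlow{x}\in A_{sp}$, and hence the symmetric absolute value $s(x)=\rupp{x}+\rlow{x}=\lupp{x}+\llow{x}$ of Theorem \ref{absval}(h) lies in $A_{sp}$. The delicate point is that $\lupp{x}$ and $\llow{x}$ are only known to lie in $A_p$, whereas writing $x=\rupp{x}-\llow{x}$ as a difference of $A_{sp}$-elements requires $\llow{x}\sgeq 0$; my approach would be to exploit the identity $\llow{x}=s(x)-\lupp{x}$ with $s(x)\in A_{sp}$ together with the dominated decomposition (Theorem \ref{riesz_dec}) and the hypothesis $V_p=A_p+B_p$ to split $s(x)$ into specifically positive summands matching $\lupp{x}$ and $\llow{x}$. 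Once $A$ is known to be regular, Theorem \ref{direct_sum} directly yields $A={}^{\perp}B$, $B=A^{\perp}$, and the fact that $B$ is a band. The remaining step, that $A$ itself is a band, follows via Lemma \ref{band_lemma}: given a $(\leq)$-upwards directed $E\subseteq A_p$ with $m=\sup E$ existing in $V$, decomposing $m=m_1+m_2$ and, for each $u\in E$, $m-u=p_u+q_u$ via $V_p=A_p+B_p$ and invoking uniqueness forces $m_1=u+p_u$, so $u\leq m_1$; hence $m_1$ is an upper bound of $E$, giving $m\leq m_1$, $m_2=0$, and $m=m_1\in A$.
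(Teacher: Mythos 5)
Your direction (b)$\Rightarrow$(a) is complete and matches the paper's own argument (Theorem \ref{specific_direct_sum1} for the specific cones, Theorem \ref{sum_of_ideals2} for the positive cones), and most of (a)$\Rightarrow$(b) is sound: the order-convexity argument is exactly the paper's, your variant proof that $\rupp{x}\in A_{sp}$ (forcing $v=q$, then $v\sleq\rupp{x}$, $v\leq\llow{x}$, and $v=v\lenv v\leq\rupp{x}\lenv\llow{x}=0$ by (M5) and Theorem \ref{absval}(g)) is correct, and your direct verification of the band property via Lemma \ref{band_lemma} works. The genuine gap is the regularity of $A$, which you rightly single out as the delicate point but then only sketch --- and the sketch does not go through. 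Theorem \ref{riesz_dec}(a) requires the dominating sum to have one summand in $V_{sp}$ (and part (b) requires the domination to be in the specific order), whereas in $s(x)=\lupp{x}+\llow{x}$ both summands are only known to lie in $A_p$; that is precisely the obstruction you name, and the dominated decomposition theorem cannot remove it. Nothing in the identity $\llow{x}=s(x)-\lupp{x}$ produces an element of $A_{sp}$ specifically dominating $x$, and since Theorem \ref{direct_sum} is only available to you after regularity is established, your plan is circular at this point.

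The paper closes this gap by reversing the order of deductions: it proves $A={}^{\perp}B$ \emph{before} knowing that $A$ is regular. Once $A$ and $B$ are both ideals, for $x\in A_{sp}$ and $y\in B_p$ the inequalities $0\leq x\lenv y\leq x$ and $0\leq x\lenv y\leq y$ put $x\lenv y$ in $A\cap B=\{0\}$, so $A\subseteq{}^{\perp}B$; conversely any $0\leq z\in{}^{\perp}B$ splits as $z=a+b$ with $a\in A_p\subseteq{}^{\perp}B$ and $b\in B_p$, whence $b=z-a\in B\cap{}^{\perp}B=\{0\}$ and $z=a\in A$, giving $A={}^{\perp}B$. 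Regularity of $A$ then comes for free from Theorem \ref{left_disjoint_comp_is_band}, because a left disjoint complement is by construction the difference set of a cone of specifically positive elements. If you substitute this step for your $s(x)$-manipulation, the remainder of your argument (including your alternative band verification) assembles into a correct proof.
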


\begin{bproof}
Assuming that $V=A\oplus B$ is a mixed-order direct sum, we first show that $A$ is $(\leq)$-order convex. Let $0\leq v \leq u$ with $u\in A$. If we put $w=u-v$ then $u=v+w$ and $v,w\geq 0$. Now by assumption $v$ and $w$ have representations $v=v_1+v_2$ and $w=w_1+w_2$ where $v_1,w_1\in A_{p}$ and $v_2,w_2\in B_p$. Hence, $u=(v_1+v_2)+(w_1+w_2)$, or $v_2+w_2=u-(v_1+w_1)$. $A$ and $B$ are subspaces, so $v_2+w_2\in B$ and $u-(v_1+w_1)\in A$. But then $v_2+w_2\in A\cap B =\{0\}$, and it follows that $v_2=w_2=0$. Thus, $v=v_1\in A_{p}$ and this shows that $A$ is $(\leq)$-order convex. 

Next we need to prove that $A$ is a mixed lattice subspace.  
By assumption, the elements $\rupp{x}$ and $\llow{x}$ can be written as $\rupp{x}=a_1+b_1$ and $\llow{x}=a_2+b_2$ where $a_1\in A_{sp}$, $b_1\in B_{sp}$ and  $a_2\in A_{p}$,  $b_2\in B_p$.
Now $x=\rupp{x}-\llow{x}=(a_1-a_2)-(b_2-b_1)$, so $b_2-b_1=x-(a_1-a_2)$. Since $A$ and $B$ are subspaces, we have $b_2-b_1\in B$ and $x-(a_1-a_2)\in A$. But then $b_2-b_1\in A\cap B =\{0\}$, and it follows that  $x=a_1-a_2$, 
and so we have two representations  $x=\rupp{x}-\llow{x}$ and $x=a_1-a_2$ with $a_1\in A_{sp}$ and $a_2\in A_p$. By Theorem \ref{disjoint} we have $0\leq  \rupp{x} \leq a_1$ and $0\leq  \llow{x} \leq a_2$. It follows that $\llow{x}, \rupp{x} \in A$, since $A$ is order-convex. This shows that $A$ is a mixed lattice subspace, and hence an ideal. 
Similar arguments show that $B$ is also an ideal.

To show that $A$ is regular, we note that $0\leq x\lenv y\leq x$ and $0\leq x\lenv y\leq y$ for all $x\in A_{sp}$ and $y\in B_p$, and since $A$ and $B$ are ideals such that $A\cap B =\{0\}$, it follows that $x\lenv y=0$ for all $x\in A_{sp}$ and $y\in B_p$. Hence, $A\subseteq  {}^{\perp}B$. Next, let $0\leq z\in {}^{\perp}B$. By assumption we can write $z=a+b$ where $a\in A_p$ and $b\in B_p$. Then $b=z-a\in {}^{\perp}B$, because $A\subseteq {}^{\perp}B$ and ${}^{\perp}B$ is a subspace. Thus, $b\in B\cap {}^{\perp}B=\{0\}$, by Theorem  \ref{properties_of_complements}. Hence, $z=a\in A$ and so $A= {}^{\perp}B$, which is a regular weak band  
by Theorem \ref{left_disjoint_comp_is_band}. It now follows from Theorem \ref{direct_sum} that $B$ is a weak band such that $B= {A}^{\perp}$. 
\blue{To see that $B$ is actually a band, let $E$ be a subset of $B$ such that $w=\sup E$ exists in $V$. Then for any $x\in E$ we have $z=w-x\geq 0$, and so $z=z_1+z_2$ with $z_1\in A_{p}$ and $z_2\in B_{p}$. On the other hand, $w=w_1+w_2$ with $w_1\in A$ and $w_2\in B$, and so $z=w_1 + (w_2-x)$ where $w_2-x\in B$. Since this representation is unique, we have $z_1=w_1 \in A_{p}$ and $z_2=w_2-x \in B_{p}$. Hence,  
$x\leq w_2 \leq w_1+w_2 = w$. Since this holds for any $x\in E$,  
it follows that $w_1 =0$ and $w=w_2\in B$, and so $B$ is a band. A similar argument shows that $A$ is also a band.}

To prove 
the implication $(b)\Rightarrow (a)$, assume that $A$ is a regular band such that $V=A\oplus A^{\perp}$. If $W=\aspp-\aspp$ 
then by Theorem \ref{smallest_ideal_inside_ideal} $W$ is a quasi-ideal, so by Theorem \ref{specific_direct_sum1} we have  
$V_{sp}=A_{sp}+(A^{\perp})_{sp}=A_{sp}+W_{sp}$. Then $V_{p}=A_{p}+(A^{\perp})_{p}$, by Theorem \ref{sum_of_ideals2}, and hence $V=A\oplus A^{\perp}$ is a mixed-order direct sum.
\end{bproof}

Now, if $V=A\oplus A^{\perp}$ is a mixed-order direct sum then the left and right disjoint complements of $A$ are very closely related, and they are equal if $V$ is regular.

\begin{thm}\label{mods_relations}
If $V=A\oplus A^{\perp}$ is a mixed-order direct sum then $A= {}^{\perp}(A^{\perp}) = (A^{\perp})^{\perp}$. Moreover, ${}^{\perp}A \subseteq A^{\perp}$, and if $A^{\perp}$ is regular then ${}^{\perp}A = A^{\perp}$. In this case, 
$A= {}^{\perp}(A^{\perp}) = (A^{\perp})^{\perp} = {}^{\perp}({}^{\perp}A) = ({}^{\perp}A)^{\perp}$. In particular, all these equalities hold if $V$ is regular.
\end{thm}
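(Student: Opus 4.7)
The plan is to first use Theorem \ref{positive_decomp2}: the hypothesis that $V = A \oplus A^{\perp}$ is a mixed-order direct sum already gives that $A$ is a regular band, $A^{\perp}$ is a band, the identity $A = {}^{\perp}(A^{\perp})$ holds, and $V_{sp} = A_{sp} + (A^{\perp})_{sp}$. Together with $A \cap A^{\perp} = \{0\}$ (Theorem \ref{properties_of_complements}(d)), this means every $x \in V_{sp}$ has a unique representation $x = x_1 + x_2$ with $x_1 \in A_{sp}$ and $x_2 \in (A^{\perp})_{sp}$.

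To obtain the remaining equality $A = (A^{\perp})^{\perp}$, I would argue both inclusions at the level of $(\sleq)$-positive cones and then extend via the ideal structure using Corollary \ref{ideals_corollary} and the regularity of $A$. For $A_{sp} \subseteq ((A^{\perp})^{\perp})_{sp}$, given $a \in A_{sp}$ and $w \in (A^{\perp})_{sp}$, the element $v = w \lenv a$ satisfies $0 \sleq v \sleq w$ and $0 \leq v \leq a$ by (M4) and (M8b); since both $A$ and $A^{\perp}$ are ideals, and hence both $(\leq)$- and $(\sleq)$-order convex, this forces $v \in A \cap A^{\perp} = \{0\}$, so $w \lenv a = 0$. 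For the reverse inclusion, given $x \in ((A^{\perp})^{\perp})_{sp}$ with direct sum decomposition $x = x_1 + x_2$, the choice $w = x_2 \in (A^{\perp})_{sp}$ in the defining condition of $(A^{\perp})^{\perp}$ yields $x_2 \lenv x = 0$, and applying (M8b) to the trivial inequalities $x_2 \sleq x_2$ and $x_2 \sleq x$ then forces $x_2 \sleq 0$, so $x_2 = 0$ and $x = x_1 \in A$.

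The inclusion ${}^{\perp}A \subseteq A^{\perp}$ is completely analogous: for $x \in ({}^{\perp}A)_{sp}$ the decomposition gives $x_1 \in A_{sp} \subseteq A_p$, whence $x \lenv x_1 = 0$ by the defining condition of ${}^{\perp}A$, and (M8b) forces $x_1 \sleq 0$, so $x_1 = 0$ and $x = x_2 \in A^{\perp}$. Regularity of ${}^{\perp}A$ (Theorem \ref{left_disjoint_comp_is_band}) extends this to all of ${}^{\perp}A$. If moreover $A^{\perp}$ is regular, then Theorem \ref{properties_of_complements}(e) supplies $(A^{\perp})_{sp} \subseteq ({}^{\perp}A)_{sp}$ (a regular band being a quasi-ideal), and regularity on both sides gives the reverse inclusion $A^{\perp} \subseteq {}^{\perp}A$, hence equality. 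The remaining chain of four identities follows by substituting ${}^{\perp}A = A^{\perp}$ into the equalities already proved, and the closing assertion is immediate from Theorem \ref{specific_direct_sum2}, which guarantees that $V$ regular forces $A^{\perp}$ regular.

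The main obstacle is the asymmetry of $\lenv$: the conditions $a \lenv z = 0$ and $z \lenv a = 0$ are in general distinct, so one cannot directly deduce $A \subseteq (A^{\perp})^{\perp}$ from the already-known $A = {}^{\perp}(A^{\perp})$. The device that rescues us is the observation that any $\lenv$-infimum of an element of $A_{sp}$ with an element of $(A^{\perp})_{sp}$ lies in $A \cap A^{\perp} = \{0\}$, so that the two otherwise inequivalent one-sided disjointness conditions both collapse into the single trivial intersection coming from the direct sum hypothesis.
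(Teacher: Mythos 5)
Your proof is correct, and while its overall skeleton (get $A={}^{\perp}(A^{\perp})$ from the mixed-order direct sum characterization, then analyze $(\sleq)$-positive elements via the decomposition $V_{sp}=A_{sp}+(A^{\perp})_{sp}$) matches the paper's, two of your key steps go by a genuinely different route. For $A=(A^{\perp})^{\perp}$ the paper simply invokes Theorem \ref{properties_of_complements}(f) applied to the ideal $A^{\perp}$ (whose left disjoint complement ${}^{\perp}(A^{\perp})=A$ is an ideal, forcing ${}^{\perp}(A^{\perp})=(A^{\perp})^{\perp}$), whereas you prove both inclusions directly at the cone level: the inclusion $A_{sp}\subseteq((A^{\perp})^{\perp})_{sp}$ by trapping $w\lenv a$ in $A\cap A^{\perp}=\{0\}$ via order convexity on both sides, and the reverse by killing the $A^{\perp}$-component with (M8b). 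This is more self-contained and makes explicit the point you flag at the end, namely that the direct sum collapses the two asymmetric disjointness conditions into one. For ${}^{\perp}A\subseteq A^{\perp}$ the paper decomposes $x=x_1+x_2$ and uses Lemma \ref{disjoint_sum_left} together with $(A^{\perp})_{sp}\subseteq({}^{\perp}A)_{sp}$ to transfer $x\lenv z=0$ to $x_1\lenv z=0$ for all $z\in A_p$, concluding $x_1\in{}^{\perp}A\cap A=\{0\}$; you instead test disjointness against $x_1$ itself ($x\lenv x_1=0$ since $x_1\in A_p$) and apply (M8b) with $x_1\sleq x_1$ and $x_1\sleq x$ to get $x_1\sleq 0$ directly. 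Your shortcut avoids both Lemma \ref{disjoint_sum_left} and the prior containment from Theorem \ref{properties_of_complements}(e) at that stage, at the cost of nothing; the paper's version buys uniformity with the machinery it has already set up. The remaining steps (regularity of ${}^{\perp}A$ to pass from cones to subspaces, the use of Theorem \ref{properties_of_complements}(e) plus regularity of $A^{\perp}$ for the converse inclusion, and Theorem \ref{specific_direct_sum2} for the regular case) coincide with the paper's.
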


\begin{bproof}
The equality $A= {}^{\perp}(A^{\perp})$ follows from Theorem \ref{positive_decomp2}, and since $A$ and $A^{\perp}$ are both ideals, it then follows from Theorem \ref{properties_of_complements}(e) that ${}^{\perp}(A^{\perp})=A=(A^{\perp})^{\perp}$. 
By Theorem \ref{properties_of_complements}(d) $(A^{\perp})_{sp}\subseteq ({}^{\perp}A)_{sp}$. To prove the reverse inclusion, let $x\in ({}^{\perp}A)_{sp}$. Then by assumption we can write $x=x_1+x_2$ where $x_1\in A_{sp}$ and $x_2\in (A^{\perp})_{sp}\subseteq ({}^{\perp}A)_{sp}$. Then for every $z\in A_{sp}$ we have $x_2\lenv z=0$, and it follows by by Lemma \ref{disjoint_sum_left} that $0= x\lenv z = (x_1+x_2)\lenv z = x_1\lenv z$. Hence $x_1\in ({}^{\perp}A)_{sp}$, and so $x_1\in {}^{\perp}A\cap A=\{0\}$. Thus $x=x_2\in (A^{\perp})_{sp}$, and so $({}^{\perp}A)_{sp}=(A^{\perp})_{sp}$. Since ${}^{\perp}A=({}^{\perp}A)_{sp}-({}^{\perp}A)_{sp}$, it follows that ${}^{\perp}A\subseteq A^{\perp}$, and if $A^{\perp}$ is regular then the equality ${}^{\perp}A = A^{\perp}$ holds. In this case we therefore have $A= {}^{\perp}(A^{\perp}) = (A^{\perp})^{\perp} = {}^{\perp}({}^{\perp}A) = ({}^{\perp}A)^{\perp}$. In particular, this holds if $V$ is regular, by Theorem \ref{specific_direct_sum2}.
\end{bproof}

The next theorem shows that disjoint components of elements in a mixed lattice space have similar properties as in Riesz spaces. In particular, the symmetric absolute value behaves as one would expect.

\begin{thm}\label{disjoint_sav}
Let $V=A\oplus B$ be a mixed-order direct sum. If $x\in A$ and $y\in B$ then the following hold.
\begin{enumerate}[(a)]
\item
$\rupp{(x+y)}=\rupp{x}+\rupp{y}$ \; and \; $\llow{(x+y)}=\llow{x}+\llow{y}$.
\item
$\lupp{(x+y)}=\lupp{x}+\lupp{y}$ \; and \; $\rlow{(x+y)}=\rlow{x}+\rlow{y}$.
\item
$\ul{(x+y)}=\ul{x}+\ul{y}$ \; and \; $\lu{(x+y)}=\lu{x}+\lu{y}$.
\item
$s(s(x)-s(y))=s(x+y)=s(x-y)=s(x)+s(y)=s(x)\uenv s(y)=s(y)\uenv s(x)=\spsup\{s(x),s(y)\}$.
\end{enumerate}
\end{thm}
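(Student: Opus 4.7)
The proof rests on the observation that each of the quantities $\rupp{z}$, $\llow{z}$, $\lupp{z}$, $\rlow{z}$, $x\uenv y$ and $\spsup\{x,y\}$ is defined as a minimum of a set of elements, with respect to either $\leq$ or $\sleq$. Combined with the hypothesis that $V_p=A_p+B_p$ and $V_{sp}=A_{sp}+B_{sp}$, this allows one to extract the $A$-component and the $B$-component of any candidate minimum separately, and this single idea yields all four statements.

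To prove (a), I would verify that $\rupp{x}+\rupp{y}=\min\{c\in V:c\sgeq 0,\;c\geq x+y\}=0\uenv(x+y)=\rupp{(x+y)}$. That $\rupp{x}+\rupp{y}$ lies in this set is clear from $\rupp{x},\rupp{y}\sgeq 0$ and (M4). For minimality, take any such $c$ and write $c=c_1+c_2$ with $c_1\in A_{sp}$, $c_2\in B_{sp}$, and $c-(x+y)=u+v$ with $u\in A_p$, $v\in B_p$. Rearranging $c_1+c_2=(x+u)+(y+v)$ expresses $c_1-x-u\in A$ as the negative of $c_2-y-v\in B$; by the directness of $A+B$ both sides are zero. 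Hence $c_1\geq x$ and $c_1\sgeq 0$ give $c_1\geq\rupp{x}$, and similarly $c_2\geq\rupp{y}$, so $c\geq\rupp{x}+\rupp{y}$. The identity $\llow{(x+y)}=\llow{x}+\llow{y}$ then follows by applying the same argument to $-x\in A$ and $-y\in B$, since $\llow{z}=\rupp{(-z)}$.

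Part (b) is handled identically with the roles of $\leq$ and $\sleq$ reversed: $\lupp{(x+y)}$ is the $\leq$-minimum of $\{c:c\geq 0,\;c\sgeq x+y\}$, so one decomposes $c\geq 0$ via $V_p=A_p+B_p$ and $c-(x+y)\sgeq 0$ via $V_{sp}=A_{sp}+B_{sp}$, and the same cancellation yields $c_1\sgeq x$ and $c_2\sgeq y$. Part (c) then drops out of (a) and (b) together with the identities $\ul{z}=\lupp{z}+\rlow{z}$ and $\lu{z}=\llow{z}+\rupp{z}$ from Theorem \ref{absval}(c). For (d), $s(x+y)=s(x)+s(y)$ is immediate from (c) and the definition of $s$, and replacing $y$ with $-y\in B$ and invoking $s(-y)=s(y)$ (Theorem \ref{sav}(b)) gives $s(x-y)=s(x)+s(y)$. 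Since $s(x)\in A_{sp}$ and $s(y)\in B_{sp}$, applying the additivity just proved with $s(x)$ and $-s(y)$ in place of $x,y$, together with the idempotence $s(s(x))=s(x)$ from Theorem \ref{sav}(d), yields $s(s(x)-s(y))=s(x)+s(y)$. The remaining identifications $s(x)+s(y)=s(x)\uenv s(y)=s(y)\uenv s(x)=\spsup\{s(x),s(y)\}$ are then obtained by repeating the minimum-extraction argument of part (a), with $s(x),s(y)$ in place of $\rupp{x},\rupp{y}$ and using whichever of $\leq$ and $\sleq$ is appropriate to each envelope.

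The main obstacle is conceptual rather than technical: a direct attempt via Theorem \ref{disjoint}(b) requires showing that $(\rupp{x}+\rupp{y})\lenv(\llow{x}+\llow{y})=0$, and although $\rupp{x}\lenv\llow{y}=0$ and $\rupp{y}\lenv\llow{x}=0$ do follow from the characterisation $A={}^{\perp}B$ and $B=A^{\perp}$ in Theorem \ref{positive_decomp2}, neither Lemma \ref{disjoint_sum_lemma} nor Lemma \ref{disjoint_sum_left} combines these into the desired disjointness, because $\llow{x}$ and $\llow{y}$ need not be $(\sleq)$-positive. Reformulating everything as a minimum characterisation and exploiting the uniqueness of the direct-sum decomposition sidesteps this entirely and furnishes a uniform proof of all four parts.
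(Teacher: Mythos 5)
Your argument is correct in substance but follows a genuinely different route from the paper's. The paper proves (a) by establishing the disjointness relation $(\rupp{x}+\rupp{y})\lenv(\llow{x}+\llow{y})=0$ and then invoking Theorem \ref{disjoint}(b); the obstacle you describe in your last paragraph --- that $\llow{x},\llow{y}$ need not be $(\sleq)$-positive, so Lemma \ref{disjoint_sum_lemma} seems inapplicable --- is in fact surmounted there by the bound $0\leq \rupp{x}\lenv(\llow{x}+\llow{y})\leq\rupp{x}\lenv(\llow{x}+\rupp{(\llow{y})})$ from (M5), since $\rupp{(\llow{y})}\in B_{sp}$ is $(\sleq)$-positive and dominates $\llow{y}$; Theorem \ref{new_ineq} then handles the second summand. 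Your alternative --- exhibiting $\rupp{x}+\rupp{y}$ as a member of and a lower bound for the defining set $\{c:c\sgeq 0,\ c\geq x+y\}$, using both decompositions $V_{sp}=A_{sp}+B_{sp}$ and $V_p=A_p+B_p$ together with $A\cap B=\{0\}$ to split an arbitrary competitor --- is sound and arguably more uniform, since the same cancellation also delivers $s(x)\uenv s(y)$, $s(y)\uenv s(x)$ and $\spsup\{s(x),s(y)\}$ in (d) without first proving $s(x)\lenv s(y)=s(y)\lenv s(x)=0$ as the paper does. What the paper's route buys is the disjointness identity itself, which is reused elsewhere. One slip to fix: $\rupp{(-z)}$ equals $\rlow{z}$, not $\llow{z}$ (Theorem \ref{absval}(a)), so negating the first identity of (a) yields the second identity of (b), while the second identity of (a) comes from negating the first identity of (b) via $\llow{z}=\lupp{(-z)}$; since you prove both direct identities anyway, this only permutes the bookkeeping and does not affect the validity of the proof.
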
 

\begin{bproof}
To prove (a), we start by noting that $0\leq\rupp{x}\lenv (\llow{x}+\llow{y})\leq \rupp{x}\lenv (\llow{x}+\rupp{(\llow{y})})$. Now $\rupp{x}\lenv \llow{x}=0$ (by Theorem \ref{absval}) and  $\rupp{x}\lenv \rupp{(\llow{y})}=0$ (since $\rupp{(\llow{y})}\in B_{sp}$). Hence, by Lemma \ref{disjoint_sum_lemma} we have $\rupp{x}\lenv (\llow{x}+\rupp{(\llow{y})})=0$, and consequently, $\rupp{x}\lenv (\llow{x}+\llow{y})=0$. Now let $w=(\rupp{x}+\rupp{y})\lenv (\llow{x}+\llow{y})$. Then $w\leq \llow{x}+\llow{y}$ and, since $\rupp{y}\sgeq 0$, by Theorem \ref{new_ineq} we also have 
$$
w\sleq (\rupp{x}+\rupp{y})\lenv (\rupp{y}+\llow{x}+\llow{y})=\rupp{y} +  \rupp{x}\lenv (\llow{x}+\llow{y}) = \rupp{y}.
$$
This implies that $0\leq w\leq \rupp{y}\lenv (\llow{x}+\llow{y})$.
Next we note that $\rupp{y}\in B_{sp}=(A^{\perp})_{sp}\subseteq ({}^\perp A)_{sp}$ (by Theorem \ref{properties_of_complements}(d)), and $\rupp{(\llow{x})}\in A_{sp}$, so  $0\leq \rupp{y}\lenv \llow{x} \leq\rupp{y}\lenv \rupp{(\llow{x})}=0$. Moreover, $\rupp{y}\lenv \llow{y}=0$, so we can use Lemma \ref{disjoint_sum_lemma} again to obtain $0\leq \rupp{y}\lenv (\llow{x}+\llow{y})\leq \rupp{y}\lenv (\rupp{(\llow{x})}+\llow{y})=0$. 
Hence, we have shown that $\rupp{y}\lenv (\llow{x}+\llow{y})=0$, and consequently, $w=0$. 

Since $x+y=(\rupp{x}+\rupp{y})-(\llow{x}+\llow{y})$ and $(\rupp{x}+\rupp{y})\lenv (\llow{x}+\llow{y})=0$, we have $\rupp{(x+y)}=\rupp{x}+\rupp{y}$ and $\llow{(x+y)}=\llow{x}+\llow{y}$, by Theorem \ref{disjoint}. 
Similar reasoning proves (b), and adding the equalities in (a) and (b) gives the equalities in (c). Adding the equalities in (c) then gives $s(x+y)=s(x)+s(y)$. Since this holds for all $x$ and $y$, we can replace $y$ by $-y$ to get $s(x-y)=s(x)+s(-y)=s(x)+s(y)$. The equality 
$s(s(x)-s(y))=s(x)+s(y)$ then follows immediately from Theorem \ref{sav}(f) by replacing $x$ with $s(x)$ and $y$ with $s(y)$, and using the fact that $s(x)\lenv s(y)=s(y)\lenv s(x)=0$. Indeed, since  
$s(x)\in A_{sp}$ and $s(y)\in B_{sp}=({A}^{\perp})_{sp}$, then by Theorem \ref{properties_of_complements}(d) we have $s(y)\in ({}^{\perp}A)_{sp}$, and so $s(x)\lenv s(y)=s(y)\lenv s(x)=0$. Consequently, 
$s(x)\uenv s(y)=s(y)\uenv s(x)=s(x)+s(y)=\spsup\{s(x),s(y)\}$. Here the last equality follows by noting that $s(x)+s(y)$ is clearly a $(\sleq)$-upper bound of $\{s(x),s(y)\}$, and if $s(x)\sleq c$ and $s(y)\sleq c$ then $s(x)\uenv s(y)\sleq c$, completing the proof.
\end{bproof}

If $C$ is a mixed lattice cone then the following inequalities hold for all $x,y,z\in C$. 
(For the proofs we refer to \cite[pp.13]{ars} and \cite[Theorem 3.6.]{ars})

\begin{equation}\label{kakka1}
z\lenv (x+y) \leq z\lenv x + z\lenv y
\end{equation}
\begin{equation}\label{kakka2}
(x+y)\lenv z \sleq x\lenv z + y\lenv z
\end{equation}

\vspace{0.3cm}
With these inequalities we obtain the following formulae for the mixed envelopes of specifically positive elements.

\begin{prop}\label{disjoint_sp_element_prop}
Let $V=A\oplus B$ be a mixed-order direct sum. 
If $x,y\in V_{sp}$ are elements with components $x=x_1+x_2$ and $y=y_1+y_2$ where $x_1,y_1\in A_{sp}$ and $x_2,y_2\in B_{sp}$, then 
$$
x\lenv y=x_1\lenv y_1 + x_2\lenv y_2 \quad \textrm{ and } \quad x\uenv y=x_1\uenv y_1 + x_2\uenv y_2.
$$
\end{prop}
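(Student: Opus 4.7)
The plan is to prove the formula for the mixed lower envelope first and then deduce the formula for the mixed upper envelope from the fundamental identity (M1). By Theorem \ref{positive_decomp2}, the hypothesis that $V=A\oplus B$ is a mixed-order direct sum means in particular that $A$ is a regular band, $B$ is a band, and $A={}^{\perp}B$, $B=A^{\perp}$; this is the only structural fact about the decomposition I will need.

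For the upper bound on $x\lenv y=(x_1+x_2)\lenv(y_1+y_2)$, I would apply inequality \eqref{kakka1} (valid in the mixed lattice cone $V_{sp}$) followed by two applications of \eqref{kakka2} to obtain
\[
x\lenv y \;\leq\; x_1\lenv y_1 + x_1\lenv y_2 + x_2\lenv y_1 + x_2\lenv y_2.
\]
The key point is that both cross terms vanish. Since $A={}^{\perp}B$, the element $x_1\in A_{sp}$ satisfies $x_1\lenv z=0$ for every $z\in B_p$, hence $x_1\lenv y_2=0$. For $x_2\lenv y_1$ I invoke the inclusion $B_{sp}=(A^{\perp})_{sp}\subseteq ({}^{\perp}A)_{sp}$, which is Theorem \ref{properties_of_complements}(e) applied to the quasi-ideal $A$; this yields $x_2\lenv z=0$ for every $z\in A_p$, so in particular $x_2\lenv y_1=0$.

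For the reverse inequality, set $w=x_1\lenv y_1+x_2\lenv y_2$. From (M4) I have $x_i\lenv y_i\sleq x_i$ and $x_i\lenv y_i\leq y_i$ for $i=1,2$; adding these gives $w\sleq x$ and $w\leq y$. By the extremal defining property of the mixed lower envelope, $w\leq x\lenv y$, which completes the proof of the lower envelope identity. The upper envelope identity then follows by applying (M1) both globally and componentwise: $x\uenv y=x+y-y\lenv x$, and substituting the lower envelope identity just proved for $y\lenv x$ (with the roles of $x$ and $y$ swapped) and regrouping,
\[
x\uenv y=(x_1+y_1-y_1\lenv x_1)+(x_2+y_2-y_2\lenv x_2)=x_1\uenv y_1+x_2\uenv y_2.
\]

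The main obstacle is the vanishing of the cross terms in the first step, because the disjointness encoded in the complements is intrinsically one-sided: the identity $a\lenv b=0$ for $a\in A_{sp}$ and $b\in B_p$ is immediate from $A={}^{\perp}B$, but the reversed expression $b\lenv a=0$ requires the additional inclusion provided by Theorem \ref{properties_of_complements}(e). Once both cross terms are eliminated, the rest is a direct application of the defining extremal properties of $\lenv$ and the identity (M1).
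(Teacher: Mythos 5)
Your proposal is correct and follows essentially the same route as the paper: the upper estimate via the subadditivity inequalities \eqref{kakka1} and \eqref{kakka2} with both cross terms killed by the one-sided disjointness of $A={}^{\perp}B$ and $B=A^{\perp}$ (your citation of Theorem \ref{properties_of_complements}(e) for $x_2\lenv y_1=0$ is a legitimate alternative to the paper's appeal to Theorem \ref{mods_relations}), the lower estimate from the extremal property of $\lenv$, and the upper-envelope formula from (M1).
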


\begin{bproof}
$A_{sp}$ and $B_{sp}$ are mixed lattice cones, so  
using \ref{kakka1} and \ref{kakka2} together with the fact that $x_1\lenv y_2 = 0 = x_2\lenv y_1$ \blue{(where the last equality follows from Theorem \ref{mods_relations}, since $y_1\in A={}^{\perp}B \subseteq B^{\perp}$)} we get
$$
\begin{array}{lcl}
x\lenv y & = & (x_1+x_2)\lenv(y_1+y_2) \\
& \sleq  &  x_1 \lenv(y_1+y_2) + x_2 \lenv(y_1+y_2) \\
& \leq & x_1\lenv y_1 + x_1\lenv y_2 + x_2\lenv y_1 + x_2\lenv y_2 \\
& = & x_1\lenv y_1 + x_2\lenv y_2.
\end{array}
$$
On the other hand, we have $x_1\lenv y_1 + x_2\lenv y_2\sleq x_1+x_2=x$ and $x_1\lenv y_1 + x_2\lenv y_2\leq y_1+y_2=y$, so $x_1\lenv y_1 + x_2\lenv y_2\leq x\lenv y$, and the first identity follows. Similar reasoning shows that $y\lenv x=y_1\lenv x_1 + y_2\lenv x_2$.

Substituting these into $x\uenv y=x+y-y\lenv x$ gives 
$$
\begin{array}{lcl}
x\uenv y & = & x_1+x_2 +y_1+y_2 - (y_1\lenv x_1 + y_2\lenv x_2) \\
& = & (x_1+y_1 -y_1\lenv x_1) + (x_2+y_2-y_2\lenv x_2)\\
& = & x_1 \uenv y_1 + x_2 \uenv y_2.
\end{array}
$$
\end{bproof}


If $V=A\oplus A^{\perp}$ is a mixed order direct sum then the components of a positive element in $A$ and $A^{\perp}$ are given by the next theorem, which is proved exactly as Theorem \ref{sp_components}. 

\begin{thm}\label{components}
Let $A$ be a band such that $V=A\oplus A^{\perp}$ is a mixed order direct sum. Then for every $x\in V_{p}$ the elements
$$
x_1=\sup\{v\in A: 0\leq v\leq x\} \quad \textrm{and} \quad x_2=\sup\{v\in \rdc{A}: 0\leq v\leq x\}
$$
exist, and $x_1$ and $x_2$ are the components of $x$ in $A$ and $\rdc{A}$, respectively.
\end{thm}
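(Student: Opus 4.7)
The plan is to mirror the proof of Theorem \ref{sp_components} exactly, with $\leq$ replacing $\sleq$ throughout, and to invoke the mixed-order direct sum hypothesis at the single place where the previous proof used the fact that $V_{sp}=A_{sp}+(\ldc{A})_{sp}$.

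First I would record a small but crucial observation about mixed-order direct sums: if $V=A\oplus A^{\perp}$ is a mixed-order direct sum and $u\in V_p$, then in the unique decomposition $u=u_1+u_2$ with $u_1\in A$ and $u_2\in A^{\perp}$ one automatically has $u_1\geq 0$ and $u_2\geq 0$. Indeed, by the defining property $V_p=A_p+(A^{\perp})_p$ we can also write $u=a+b$ with $a\in A_p$ and $b\in(A^{\perp})_p$, and uniqueness of the decomposition forces $u_1=a$ and $u_2=b$.

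Next, fix $x\in V_p$ and let $x=x_1+x_2$ be its decomposition with $x_1\in A$ and $x_2\in A^{\perp}$; by the observation above $x_1\in A_p$ and $x_2\in (A^{\perp})_p$. Set $M=\{v\in A:0\leq v\leq x\}$. Since $x_2\geq 0$, we have $x_1=x-x_2\leq x$, and $x_1\geq 0$, so $x_1\in M$. For an arbitrary $v\in M$, the element $x-v$ is $(\leq)$-positive and admits the representation $x-v=(x_1-v)+x_2$ with $x_1-v\in A$ and $x_2\in A^{\perp}$; applying the observation to $x-v$ yields $x_1-v\geq 0$, i.e.\ $v\leq x_1$. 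Hence $x_1$ is the largest element of $M$, and in particular $x_1=\sup M$. The corresponding formula for $x_2$ is obtained by interchanging the roles of $A$ and $A^{\perp}$ (which is legitimate since, by Theorem \ref{positive_decomp2}, both $A$ and $A^{\perp}$ are bands and the hypothesis is symmetric in them).

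The only non-routine point is the initial observation on componentwise $(\leq)$-positivity, and this is precisely where the mixed-order direct sum hypothesis is essential: for a bare direct sum $V=A\oplus A^{\perp}$ one could not in general conclude that the $A$-component of a $(\leq)$-positive element is itself $(\leq)$-positive, and without this the argument that $x_1\in M$ and that $x_1-v\geq 0$ would both fail. Once that observation is in hand, the proof is a verbatim transcription of the argument given for Theorem \ref{sp_components}.
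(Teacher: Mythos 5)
Your proposal is correct and follows exactly the route the paper intends: the paper's own ``proof'' of Theorem \ref{components} is simply the remark that it is proved as Theorem \ref{sp_components}, and your adaptation---replacing $\sleq$ by $\leq$ and using $V_p=A_p+(A^{\perp})_p$ together with uniqueness of the direct-sum decomposition to get componentwise positivity---is precisely the intended argument. Your explicit isolation of that componentwise-positivity observation is a helpful clarification, but it is not a departure from the paper's approach.
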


%

As in the theory of Riesz spaces, it turns out that the existence of a mixed order direct sum is equivalent to the existence of the associated order projection. Let $V=A\oplus A^{\perp}$ be a mixed order direct sum. If $x\in V$ has the components $x_1\in A$ and $x_2\in A^{\perp}$ then we define the mapping $P_A:V\to V$ by $P_A (x)=x_1$. We can immediately see that $P_A$ has the following properties:

$$ 
\begin{array}{ll}
(P1)   & \; P_A \, \textrm{ is a linear operator and } \, P_A^2=P_A.  \\ 
(P2)  & \; 0\leq P_A (x) \leq x \; \textrm{ for every } \, x\geq 0. \\
(P3)  & \; 0\sleq P_A (x) \sleq x \; \textrm{ for every } \, x\sgeq 0. \\[1ex]
\end{array} 
$$

A mapping with these properties is called a \emph{mixed order projection} and the associated band $A$ is called a \emph{projection band}. (Note that the specific projection elements in Theorem \ref{sp_components} can be given similarly in terms of a specific projection operator that has the properties (P1) and (P3)).

Conversely, if $P$ is a mixed order projection on $V$ then there exists a corresponding mixed order direct sum decomposition of $V$. Apart from a few minor modifications, the proof is essentially the same as in the case of Riesz spaces (see \cite[Theorem 11.4]{zaa}). 

\begin{thm}\label{projections}
If $P:V\to V$ is a mapping with properties (P1)--(P3) then there exists a regular band $A$ such that $V=A\oplus A^{\perp}$ is a mixed order direct sum and $P$ is the associated projection on $A$ and $I-P$ is the projection on $A^{\perp}$.
\end{thm}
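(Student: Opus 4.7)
The plan is to let the range and kernel of $P$ play the roles of the band and its disjoint complement, and then reduce the theorem to an application of Theorem \ref{positive_decomp2}. First I would set $A := P(V)$ and $B := (I-P)(V)$. From (P1), $P$ is linear and $P^2 = P$, so $A = \{x \in V : Px = x\}$ and $B = \ker P$ are linear subspaces of $V$. The identity $x = Px + (x - Px)$ together with $P(x - Px) = Px - P^2 x = 0$ gives $V = A + B$, and if $x \in A \cap B$ then $x = Px = 0$. Hence the sum is direct: $V = A \oplus B$.

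Next I would use the order bounds on $P$ to check that this is a mixed-order direct sum. Suppose $x \sgeq 0$. By (P3) we have $0 \sleq Px \sleq x$, which means $Px \in A_{sp}$ and $x - Px \sgeq 0$, so $x - Px \in B_{sp}$. Therefore $V_{sp} = A_{sp} + B_{sp}$. The identical argument with (P2) in place of (P3) shows that for $x \geq 0$ one has $Px \in A_p$ and $x - Px \in B_p$, so $V_p = A_p + B_p$. Thus condition (a) of Theorem \ref{positive_decomp2} holds for the pair $(A,B)$.

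Invoking the implication $(a) \Rightarrow (b)$ of Theorem \ref{positive_decomp2} then immediately yields that $A$ is a regular band, $B$ is a band, and the two are disjoint complements of each other: $A = {}^{\perp}B$ and $B = A^{\perp}$. In particular $V = A \oplus A^{\perp}$ is a mixed-order direct sum. It remains to verify that $P$ is the projection onto $A$ in the sense defined just before the theorem. Given the unique decomposition $x = x_1 + x_2$ with $x_1 \in A$ and $x_2 \in A^{\perp} = B$, we have $Px_1 = x_1$ and $Px_2 = 0$ by the very definitions of $A$ and $B$, so $Px = x_1$ and $(I-P)x = x_2$, as required.

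The bulk of the work has already been done in Theorem \ref{positive_decomp2}, so no new obstacle really arises here; the cone identities $V_{sp} = A_{sp} + B_{sp}$ and $V_p = A_p + B_p$ fall out directly from the double inequalities in (P2) and (P3). The one point that deserves attention is the identification $B = A^{\perp}$: a priori $B$ is only the kernel of $P$, whereas $A^{\perp}$ is the right disjoint complement generated by $\aspp$, but this identification is exactly what Theorem \ref{positive_decomp2} delivers once the mixed-order direct sum structure is in place.
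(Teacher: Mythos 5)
Your proposal is correct and follows essentially the same route as the paper: take $A=P(V)$, $B=(I-P)(V)$, verify $V=A\oplus B$ from (P1), deduce $V_p=A_p+B_p$ and $V_{sp}=A_{sp}+B_{sp}$ from (P2) and (P3), and then invoke Theorem \ref{positive_decomp2} to conclude that $A$ is a regular band with $B=A^{\perp}$. The only cosmetic difference is that you identify $A$ as the fixed-point set and $B$ as the kernel of $P$ to get $A\cap B=\{0\}$, whereas the paper computes $x=Py=P(I-P)z=0$ directly; both are immediate from idempotency.
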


\begin{bproof}
Let $A=\{Px:x\in V\}$ and $B=\{(I-P)x:x\in V\}$. Then $A$ and $B$ are clearly subspaces of $V$. If $x\in A\cap B$ then $x=Py$ and $x=(I-P)z$ for some $y,z\in V$. Using the property (P1) we get
$$
x=Py=P(Py)=P(I-P)z=Pz-P^2z=0.
$$ 
This shows that $A\cap B=\{0\}$, so $A\oplus B$ is a direct sum. Moreover, for any $x\in V$ we have $x=Px+(x-Px)=Px+(I-P)x$, so $V=A\oplus B$. It now follows from (P2) that if $x\geq 0$ then $Px\geq 0$ and $0\leq x-Px=(I-P)x$. Hence, $V_p=A_p+B_p$. Similarly, (P3) implies that $V_{sp}=A_{sp}+B_{sp}$, and so $V=A\oplus B$ is a mixed order direct sum. It then follows from Theorem \ref{positive_decomp2} that $A$ is a regular band and $B=A^{\perp}$. Clearly, $P$ and $I-P$ are the projections on $A$ and $A^{\perp}$, respectively.
\end{bproof}

If $x\geq 0$ then by Theorem \ref{components} the element $P_A (x)$ is given by $P_A (x)=\sup \{v\in A:0\leq v\leq x\}$. Since every element can be written as a difference of positive elements, we obtain the projection of an arbitrary element.

\begin{thm}\label{compot_as_projections}
If $V=A\oplus A^{\perp}$ is a mixed order direct sum and $P_A$ and $P_{A^{\perp}}=I-P_A$ are the associated projections on $A$ and $A^{\perp}$, respectively, then for any $x\in V$ we have
$$
x=P_A (x)+ P_{A^{\perp}}(x) = P_A (\rupp{x}) - P_A (\llow{x}) + P_{A^{\perp}}(\rupp{x}) - P_{A^{\perp}}(\llow{x}), 
$$
or alternatively,
$$
x=P_A (x)+ P_{A^{\perp}}(x) = P_A (\lupp{x}) - P_A (\rlow{x}) + P_{A^{\perp}}(\lupp{x}) - P_{A^{\perp}}(\rlow{x}). 
$$
\end{thm}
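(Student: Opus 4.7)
The plan is to unwind this as two routine applications of linearity, using the decomposition results for an arbitrary element already established. The first equation $x = P_A(x) + P_{A^{\perp}}(x)$ is immediate from the very definition $P_{A^{\perp}} = I - P_A$: for every $x \in V$ we have $P_A(x) + P_{A^{\perp}}(x) = P_A(x) + (I - P_A)(x) = x$. This does not even need the mixed-order direct sum structure, only property (P1) guaranteeing linearity of $P_A$ (and hence of $I - P_A$).

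For the remaining identities I would invoke Theorem \ref{absval}(b), which gives the two representations $x = \rupp{x} - \llow{x} = \lupp{x} - \rlow{x}$ of any element $x \in V$ as a difference of ($\sleq$)-positive or ($\leq$)-positive parts, respectively. Applying the linear map $P_A$ to $x = \rupp{x} - \llow{x}$ yields $P_A(x) = P_A(\rupp{x}) - P_A(\llow{x})$, and similarly $P_{A^{\perp}}(x) = P_{A^{\perp}}(\rupp{x}) - P_{A^{\perp}}(\llow{x})$. Substituting these into the already established equality $x = P_A(x) + P_{A^{\perp}}(x)$ produces the first displayed formula. The second displayed formula is obtained in exactly the same manner from the alternative decomposition $x = \lupp{x} - \rlow{x}$.

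Honestly, there is no real obstacle here: the statement is a direct consequence of (P1) together with the identities in Theorem \ref{absval}(b). The only thing worth emphasizing in the writeup is that $P_{A^{\perp}} = I - P_A$ is also a mixed order projection (and in particular linear), so that it may legitimately be distributed over the differences $\rupp{x} - \llow{x}$ and $\lupp{x} - \rlow{x}$; this is immediate since the identity operator is linear and the set of linear maps is closed under subtraction.
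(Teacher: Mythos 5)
Your proof is correct: the displayed identities, as literally stated, do follow from property (P1) (linearity of $P_A$ and hence of $P_{A^{\perp}}=I-P_A$) together with the decompositions $x=\rupp{x}-\llow{x}=\lupp{x}-\rlow{x}$ of Theorem \ref{absval}(b). However, the paper takes a genuinely different and less economical route: it decomposes $\rupp{x}=a+b$ and $\llow{x}=u+v$ into their components in $A$ and $A^{\perp}$, writes $x=x_1+x_2$, and then invokes Theorem \ref{disjoint_sav}(a) to get $\rupp{x}=\rupp{x_1}+\rupp{x_2}$ and $\llow{x}=\llow{x_1}+\llow{x_2}$; uniqueness of components then forces $a=\rupp{x_1}$, $b=\rupp{x_2}$, $u=\llow{x_1}$, $v=\llow{x_2}$. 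The payoff of the paper's argument is a stronger structural fact that your linearity argument does not deliver, namely that the projection commutes with the upper and lower part operations, $P_A(\rupp{x})=\rupp{(P_A x)}$ and $P_A(\llow{x})=\llow{(P_A x)}$ --- i.e.\ the four terms appearing in the formula are themselves the upper and lower parts of the components of $x$, not merely some differences that happen to telescope correctly. Your approach buys brevity and shows the stated identity needs none of the mixed-order machinery beyond (P1); the paper's approach buys the identification of $P_A(\rupp{x})$ and $P_A(\llow{x})$ as genuine upper/lower parts, which is what makes the formula informative when combined with the supremum description of Theorem \ref{components}. Both are valid proofs of the statement as written.
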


\begin{bproof}
Every $x\in V$ can be written as $x=\rupp{x}-\llow{x}$ (or alternatively, $x=\lupp{x}-\rlow{x}$, but this case is treated similarly).
The elements $\rupp{x}$ and $\llow{x}$ have the components $\rupp{x}=a+b$ and $\llow{x}=u+v$, where $a,u\in A$ and $b,v\in A^{\perp}$. 
On the other hand, $x=x_1+x_2$ with the components $x_1\in A$ and $x_2\in A^{\perp}$. By Theorem \ref{disjoint_sav} we have
$a+b=\rupp{x}=\rupp{x_1}+\rupp{x_2}$ and $u+v=\llow{x}=\llow{x_1}+\llow{x_2}$, where $\rupp{x_1},\llow{x_1}\in A$ and $\rupp{x_2},\llow{x_2}\in A^{\perp}$. 
Since $A\cap A^{\perp}=\{0\}$, this implies that $\rupp{x_1}=a$, $\rupp{x_2}=b$, $\llow{x_1}=u$ and $\llow{x_2}=v$. These components are the projection elements, and the proof is thus complete.
\end{bproof}

Next we consider some examples. First it should be noted that there are non-trivial mixed lattice spaces in which non-trivial ideals do not exist, and hence non-trivial mixed lattice decompositions do not always exist either.

\begin{example2}\label{nonontrivialideals}
Let $V=\R^2$ and define $\leq$ as the partial ordering with the usual positive cone $V_p=\{(x,y):x\geq 0, y\geq 0\}$. Let $\sleq$ to be the partial ordering induced by the positive cone $V_{sp}=\{\alpha(2,1)+\beta(1,2): \, \alpha,\beta\geq 0\}$. Then $V$ is a regular mixed lattice space. If $C_1=\{(x,y):y=\frac{1}{2}x\}$ and $C_2=\{(x,y):y=2x\}$ then $C_1$ and $C_2$ are both specific ideals, but there are no other ideals in $V$ than $\{0\}$ and $V$ itself. 

If we change $\sleq$ to be the partial ordering with the positive cone $V_{sp}=\{\alpha(1,0)+\beta(1,1): \, \alpha,\beta\geq 0\}$ then $V$ is again a regular mixed lattice space. Let $A=\{(x,y):y=x\}$ 
and $B=\{(x,y):y=0\}$. 
It is easy to see that $A$ is a regular specific ideal and $B$ is a regular ideal such that $V=A\oplus B$. The conditions of Theorem \ref{direct_sum} are thus satisfied and $A= {}^{\perp}B$ and $B= {A}^{\perp}$. Moreover,  $(A+B)_{sp}=A_{sp} + B_{sp}$ holds. However, this is not a mixed-order direct sum. For instance, the element $x=(1,2)$ cannot be written as $x=x_1+x_2$ where $x_1\in A_{p}$ and $x_2\in B_p$.
\end{example2}

The next one is related to the setting of Theorem \ref{direct_sum}.

\begin{example2}\label{bv1}
This example is adapted from \cite[pp. 34]{ars}. See also \cite[Example 2.15]{jj1}.
Let $V=BV([0,1])$ be the set of all functions of bounded variation on the interval $[0,1]$. Define the initial order as 
$f\leq g$  if  $f(x)\leq g(x)$ for all  $x\in [0,1]$, and the specific order as 
$f\sleq g$ if $f\leq g$ and  $g-f$ is non-decreasing on $[0,1]$. 
Then $V$ is a regular mixed lattice space.

Now fix some $c\in (0,1)$ and let $A$ be the subspace consisting of those functions that are constant on the closed interval $[c,1]$, and let $B$ be the subspace consisting of those functions that vanish on the closed interval $[0,c]$. Then $A$ is a regular specific ideal and $B$ is a regular ideal such that $A\cap B=\{0\}$. 
It is well known that every $g\in V$ can be written as a difference of two non-decreasing non-negative functions on $[0,1]$. Hence, to see that $V=A\oplus B$ it is sufficient to note that every non-decreasing non-negative function $f$ on $[0,1]$ can be written as $f=f_1+f_2$ where $f_1\in A$ and $f_2\in B$. Indeed, define $f_1$ by $f_1(x)=f(x)$ for all $x\in [0,c]$, and $f_1(x)=f(c)$ for all $x\in [c,1]$. Then define $f_2$ by $f_2(x)=0$ for all $x\in [0,c]$ and $f_2(x)=f(x)-f(c)$ for all $x\in [c,1]$. Then $f_1\in A$, $f_2\in B$ and $f=f_1+f_2$, and by Theorem \ref{direct_sum} $A=\ldc{B}$ is a specific band and $B=A^{\perp}$ is a weak band. 

\blue{
If we consider the space $W$ of all continuous functions of bounded variation on $[0,1]$, and we put $c=0$ with $A$ and $B$ defined as above, then $A$ is just the set of all constant functions on $[0,1]$ and $B=\{g\in W:g(0)=0\}$. Then $W=A\oplus B$ as above, and $B$ is a weak band by Theorem \ref{direct_sum}, but not a band. This can be seen by choosing $f(x)=1$ for all $x\in [0,1]$ and defining $f_n$ by $f_n(x)=nx$ for $x\in [0,1/n]$, and $f_n(x)=1$ for $x\in (1/n,1]$. Then $\{f_n\}\subseteq B$ for all $n\in \N$ and $\sup \{f_n\}=f$, but $f\notin B$. Note also, that now $f$ is not the specific supremum (and hence not the strong supremum) of $\{f_n\}$.}
\end{example2}

Special cases of mixed-order direct sums are provided by Dedekind complete Riesz spaces, where all bands are projection bands. 
We give some other examples below. %

\begin{example2}
Let $U$ be the same mixed lattice space as in Example \ref{rdisjoint_esim}, with the 
subspaces $C=\{(0,y,z):y,z\in \R \}$ and $D=\{(x,x,0):x\in \R\}$. Now $C$ is a regular ideal and $D$ is a regular specific ideal such that $U=C\oplus D$. Hence, the conditions of Theorem \ref{direct_sum} are satisfied and we have $C=D^{\perp}$ and $D= {}^{\perp}C$. Moreover, $(C+D)_{sp}=C_{sp} + D_{sp}$ but this is not a mixed-order direct sum. If $A=\{(0,0,z):z\in \R\}$ and $B=\{(x,y,0):x,y\in \R \}$  then $A$ and $B$ are both regular ideals such that $U=A\oplus B$. This is a mixed-order direct sum where $B=A^{\perp}$ and $A= {}^{\perp}B$.

Similarly, if $V$ is the same as in Example \ref{rdisjoint_esim}, then in $V$, $A$ is a regular band and $B$ is a band such that $B=A^{\perp}$, $A= {}^{\perp}B$ and $V=A\oplus B$ is a mixed order direct sum.
\end{example2}

\begin{example2}
Let $V$ be the set of all $n$-by-$n$ matrices and define $\leq$ as the usual element-wise ordering and define specific order by $M\sgeq 0$ if $M\geq 0$ and $M$ is a symmetric matrix. Then $V$ is a quasi-regular mixed lattice space (see \cite[Example 2.19]{jj1}), where the set $A$ consisting of all diagonal matrices is a band and the set $B$ of those matrices with zero diagonal elements is a band such that $A={}^{\perp}B$, \, $B=A^{\perp}$ and $V=A\oplus B$ is a mixed order direct sum.
\end{example2}

As we have seen in Example \ref{bv1}, the mixed lattice space of functions of bounded variation has direct sum decompositions of the type described in Theorem \ref{direct_sum}, but it does not possess non-trivial mixed-order direct sum decompositions. The following example is somewhat analogous to the situation in the Riesz space $C([0,1])$ of continuous real functions on the interval $[0,1]$, where non-trivial projection bands do not exist. Before discussing the example we need the following lemma.

\begin{lemma2}
The ideal $I(u)$ generated by a single element $u\in V$ is given by 
$I(u)=\{x\in V: s(x)\leq n s(u) \textrm{ for some } n\in \N\}$.
\end{lemma2}

\begin{bproof}
We first show that $I(u)$ is a subspace. If $x,y\in I(u)$ then $s(x)\leq n s(u)$ and $s(y)\leq m s(u)$ for some $n,m\in \N$. Then for all $a,b\in \R$,
$$
s(ax+by)\leq |a|s(x)+|b|s(y)\leq (|a|n+|b|m)s(u)\leq p s(u),
$$
where $p\in \N$ is a number such that $|a|n+|b|m\leq p$. Thus, $ax+ny\in I(u)$. Next, let $x\in I(u)$ and $s(y)\leq s(x)$. Then $s(y)\leq s(x)\leq n s(u)$ for some $n\in \N$ and so $y\in I(u)$. It follows that $I(u)$ is an ideal, by the condition of Theorem \ref{ideal_charact2}. To show that $I(u)$ is the smallest ideal that contains $u$, let $J$ be another ideal such that $u\in J$. Then also $ns(u)\in J$ for all $n\in \N$, and if $x\in I(u)$ the inequality $s(x)\leq ms(u)=s(mu)$ holds for some $m\in \N$. It follows again by Theorem \ref{ideal_charact2} that $x\in J$. Hence, $I(u)\subseteq J$.
\end{bproof}

\begin{example2}
Consider the regular mixed lattice space $V=BV([0,1])$, as in Example \ref{bv1}. If $f,g\in V$ then the mixed lower and upper envelopes are given by (\cite[Theorem 21.1.]{ars})
$$
(f\lenv g)(u)=\inf \, \{f(u)- (f(x)-g(x))^+ \, : \, x\in[0,u] \}
$$
and
$$
(f\uenv g)(u)=\sup \, \{f(u) + (g(x)-f(x))^+ \, : \, x\in[0,u] \},
$$
where $r^+ = \max \{0,r\}$ is the positive part of the real number $r$.

Now $V=V \oplus \{0\}$ is the only mixed order direct sum decomposition of $V$. This can be seen by considering the constant function $h(x)=1$ for all $x\in [0,1]$. First we note that the ideal generated by $h$ is $V$. Indeed, since every $f\in V$ is bounded, then for any $f\in V$ there exists some $n\in \N$ such that $s(f)\leq n h =ns(h)$. Hence $I(h)=V$, by the preceding lemma. 

Now if $V=A\oplus B$ is a mixed order direct sum then $h$ has the components $f\in A$ and $g\in B$ such that $f(x)+g(x)=1$ for all $x\in [0,1]$ and $f\lenv g =0$. This implies that $f$ and $g$ are positive, so $0\leq f(x)\leq 1$ and $0\leq g(x)\leq 1$ for all $x\in [0,1]$.  
Then the above expression for $f\lenv g$ gives 
$$
(f\lenv g)(u)=f(u)-\sup\{(f(x)-g(x))^+ : x\in[0,u]\}.
$$
If $f\lenv g=0$ then we have $f(u)=\sup\{(f(x)-g(x))^+ : x\in[0,u]\}$. In particular, $f(0)=(f(0)-g(0))^+=(2f(0)-1)^+$. Now, if $0\leq f(0)\leq \frac{1}{2}$ then $f(0)=0$. If $\frac{1}{2}< f(0)\leq 1$ then $f(0)=2f(0)-1$, or $f(0)=1$. Hence, we must have either $f(0)=0$ and $g(0)=1$, or $f(0)=1$ and $g(0)=0$. Since $h\sgeq 0$ and $V=A\oplus B$ is a mixed order direct sum then also the components of $h$ satisfy 
$f\sgeq 0$ and $g\sgeq 0$. In other words, $f$ and $g$ are non-decreasing. But this and $f(x)+g(x)=1$ imply that either $f(x)=0$ and $g(x)=1$, or $f(x)=1$ and $g(x)=0$ for all $x\in [0,1]$. This shows that either $A$ or $B$ contains the constant function $h(x)=1$. Since the ideal generated by $h$ equals $V$, we must therefore have either 
$A=V$ and $B=\{0\}$, or $B=V$ and $A=\{0\}$.
\end{example2}


A few concluding remarks are in order to further explain and justify our choice of certain definitions. Since we have defined $A^{\perp}$ to be the ideal generated by the cone $\aspp$, one might be inclined to ask why did we define $^{\perp}A$ as the specific ideal generated by the cone $\apld$, and not the ideal generated by $\apld$. The main reason for this stems from the fact that if $B$ is the ideal generated by $\apld$ then, in general, $B_{sp}$ is a larger set than $\apld$. As a consequence, properties such as $A\cap {}^{\perp}A = \{0\}$ would no longer hold. 
Moreover (and perhaps most importantly), there is a rather well developed theory of direct sum decompositions in mixed lattice semigroups, as presented in \cite{ars}. Indeed, our Theorems \ref{direct_sum} and \ref{positive_decomp2} have their counterparts in the theory of mixed lattice semigroups (\cite[Theorems 7.1 and 7.2]{ars}). (Note however, that the authors in \cite{ars} use different terminology, as they use the potential-theoretic notions of \emph{pre-harmonic band} and \emph{potential band}. The corresponding objects in this paper are called specific bands and weak bands, respectively.) Our present definitions of $A^{\perp}$ and $^{\perp}A$ are in agreement with the existing theory of mixed lattice semigroups. In fact, if $A$ is an ideal %
such that $V= {}^{\perp}A\oplus A$, then the set $V_{sp}$ is a mixed lattice semigroup in its own right (with the orderings inherited from $V$), and $V_{sp}=(^{\perp}A)_{sp} \oplus A_{sp}$ is the corresponding mixed lattice semigroup decomposition of $V_{sp}$, by Theorem \ref{direct_sum}.

With all these considerations, our definitions of the disjoint complements indeed seem to be the most natural, and also compatible with the theory of mixed lattice semigroups, at least if we restrict ourselves to mixed lattice subspaces, as we have done in this paper. However, the corresponding definitions for more general sets would be more problematic, as pointed out in Remark \ref{remark_leftdisjcompl}.





\bibliographystyle{plain}

\end{document}